\newcommand{\HOX}[1]{\marginpar{\footnotesize sharp1}}
\newtheorem{thm}{Theorem}[section]
\newtheorem{cor}[thm]{Corollary}
\newtheorem{lem}[thm]{Lemma}
\newtheorem{prop}[thm]{Proposition}
\newtheorem{defn}[thm]{Definition}
\newtheorem{rem}[thm]{Remark}
\newtheorem{invprob}[thm]{Inverse Problem}
\theoremstyle{definition}
\numberwithin{equation}{section}
\newcommand{\thmref}[1]{Theorem~\ref{sharp1}}
\newcommand{\secref}[1]{\S\ref{sharp1}}
\newcommand{\lemref}[1]{Lemma~\ref{sharp1}}
\newcommand{\C}{\mathbb{C}}
\newcommand{\N}{\mathbb{N}}
\newcommand{\R}{\mathbb{R}}
\newcommand{\cR}{\mathcal{R}}
\newcommand{\cS}{\mathcal{S}}
\renewcommand{\t}[1]{\tilde{sharp1}}
\newcommand{\dbtilde}[1]{\tilde{\tilde{sharp1}}}
\def\tilde{\widetilde}
\def \bfo {\begin {eqnarray*} }
\def \efo {\end {eqnarray*} }
\def \ba {\begin {eqnarray*} }
\def \ea {\end {eqnarray*} }
\def \beq {\begin {eqnarray}}
\def \eeq {\end {eqnarray}}
\def \supp {\hbox{supp }}
\def \diam {\hbox{diam }}
\def \p {\partial}
\def \cL {\mathcal{L}}
\def\V{\mathcal V}
\def\L{\mathcal{L}_{A,q}}
\DeclarePairedDelimiter{\inner}{\langle}{\rangle}
\def\tilde{\widetilde}
\def \bfo {\begin {eqnarray*} }
\def \efo {\end {eqnarray*} }
\def \ba {\begin {eqnarray*} }
\def \ea {\end {eqnarray*} }
\def \beq {\begin {eqnarray}}
\def \eeq {\end {eqnarray}}
\def \supp {\hbox{supp }}
\def \diam {\hbox{diam }}
\def \p {\partial}
\def \Box{\square_{A,q}}
\title[A Hyperbolic Inverse Problem on a closed manifold with disjoint data]
{A Hyperbolic Inverse Problem for lower order terms on a closed manifold with disjoint data
}
\subjclass[2020]{35R30, 35L05, 58J45, 86A22}
\author[Lassas]{Matti Lassas}
\address{Department of Mathematics and Statistics\\
University of Helsinki, 00014, Finland}
\email{matti.lassas@helsinki.fi}
\author[Liu]{Boya Liu}
\address{Department of Mathematics\\
North Dakota State University, Fargo\\ 
ND 58102, USA}
\email{boya.liu@ndsu.edu}
\author[Saksala]{Teemu Saksala}
\address{Department of Mathematics\\
North Carolina State University, Raleigh\\ 
NC 27695, USA}
\email{tssaksal@ncsu.edu}
\author[Shedlock]{Andrew Shedlock}
\address{Department of Mathematics\\
North Carolina State University, Raleigh\\ 
NC 27695, USA}
\email{ajshedlo@alumni.ncsu.edu}
\author[Zhao]{Ziyao Zhao}
\address{Department of Mathematics and Statistics\\
University of Helsinki, 00014, Finland}
\email{ziyao.zhao@helsinki.fi}
\begin{document}

\begin{abstract}
We study the unique recovery of time-independent lower order terms appearing in the symmetric first order perturbation of the  Riemannian wave equation by sending and measuring waves in disjoint open sets of \textit{a priori} known closed Riemannian manifold. In particular, we show that if the set where we capture the waves satisfies a geometric control condition as well as a certain local symmetry condition for the distance functions, then the aforementioned measurement is sufficient to recover the lower order terms up to the natural gauge.
For instance, our result holds if the complement of the receiver set is contained in a simple Riemannian manifold.
\end{abstract}

\maketitle

\section{Introduction and Statement of Results}
\label{sec:intro}

We study an inverse problem for a hyperbolic Cauchy problem on a closed Riemannian manifold. 
In particular, under two geometric assumptions, we establish a global unique recovery of time-independent lower order terms appearing in the symmetric first order perturbation of the wave equation by sending and measuring waves in disjoint open sets of an \textit{a priori} known closed Riemannian manifold. The data is formalized by a local source-to-solution map. This geometric inverse problem  is motivated by seismic exploration, which uses controlled sound waves to image and map the Earth's subsurface, revealing layers of rock and helping to locate potential deposits of oil and gas, as well as to understand groundwater, geological structures, and earthquake risks. 

Our proof strategy is built on the celebrated Boundary Control method (BC method), that was put forward by Belishev \cite{belishev_russian, belishev1990wave} and was further developed by its author and others \cite{avdonin1992boundary, belishev1989application, belishev1987nonstationary, belishev1989inverse} by combining finite-speed arguments with control theory to solve inverse problems for the wave equation in Euclidean domains. A turning point in the study of inverse problems for wave equation happened in 1995 when Tataru proved a Holmgren-type uniqueness theorem for wave equations with nonanalytic coefficients \cite{tataru1995unique, tataru1999unique}. This and the earlier results on the boundary control method by Belishev and Kurylev in \cite{belishev1992reconstruction} solved the inverse problem for the wave equation on a Riemannian manifold with complete boundary data. 
In the current paper we rely on an adaptation of the BC-method for the case of a closed manifold and the source-to-solution map that first appeared in \cite{source-to-solution}.

We assume that the set where we capture the waves satisfies a geometric control condition and a certain local symmetry condition for the distance functions.
Indeed, we attack this inverse problem with an approximate controllability argument, which is based on Tataru's unique continuation principle given in \cite{tataru1995unique}, from the set where we send the waves, 
and an application of exact controllability of the hyperbolic Cauchy problem from the set where we capture the waves. The former tool is applicable whenever the measurement time is long enough, whereas the latter is applicable in our case due to the geometric assumptions. 
The inverse problem considered in this paper is more difficult than the one solved in \cite{saksala2025inverse}, where the waves are sent and received in the same set, in which case we have full control on the norms of the waves. However, this control is lost in the current paper, which concerns  disjoint source and receiver sets. To medicate this issue, we need to impose the aforementioned geometric assumptions. 

In addition to the previously mentioned geometric assumptions, which are independent of the lower order terms that we aim to determine, we require that the spatial part of the hyperbolic operator is given by the magnetic Schr\"odinger operator. The imposed symmetry on the PDE is crucial when we prove Blagoveščenskii's identity, which allows us to calculate the $L^2$-inner products of any two waves that are sent from the source and receiver sets, respectively. Therefore, the limitations of the current work lie in (1) geometric requirements for the receiver set, (2) time-independence of lower order terms (the unique continuation principle for hyperbolic equations is known to fail if  coefficients are time-dependent \cite{Alinhac, Alinhac_Baoendi}), and (3) the specific structure of the partial differential operator. The relaxation of any of these three conditions will merit a paper of its own, and is therefore left outside the scope of this work.

\subsection{Problem setting and the main result}
Through out this paper, we let $(N,g)$ stand for a known closed Riemannian manifold, while the notations $A$ and $q$ are reserved for an unknown smooth real-valued co-vector field and an unknown smooth real-valued function, respectively. We denote by
\begin{align}
\label{eq:mag_schro_op}
\L:
=
(d+iA)^\ast(d+iA)+q
=
-\Delta_gu -2i\langle A,du\rangle_g + (id^*A + |A|_g^2 + q)u,
\end{align}
the elliptic self-adjoint first order perturbation of the Laplace-Beltrami operator $\Delta_g$, which is commonly known as the magnetic Shr\"odinger operator. Here $d$ is the exterior derivative with $d^\ast$ being its formal $L^2$-adjoint, $i$ is the imaginary unit, while $\langle \cdot, \cdot\rangle_g$ and $|\cdot|_g$ denote the Riemannian inner product and norm for co-vector fields, respectively.

For each measurement time $T>0$, we define the hyperbolic analog of $\L$ in the spacetime $(0,T)\times N$ as
\begin{align}
\label{eq:hyperbolic_operator} 
\Box
&:= 
\p_t^2+\L.
\end{align}
Then for each $f \in L^2((0,T) \times N)$, we consider the following hyperbolic Cauchy problem 
\begin{align}
\label{eq:Cauchy_Problem}
\begin{cases}
\Box u^f(t,x)=f(t,x) \quad \text{ in } (0,T)\times N,
\\
u(0,\cdot)=\p_tu(0,\cdot)=0 \quad \text{ in } N.
\end{cases}
\end{align}
We refer readers to \cite[Chapter 6]{taylor_partial} for the  wellposedness of this problem. 

The goal of this paper is to show that we can recover the unknown time-independent lower order terms $A$ and $q$ appearing in $\Box$ by sending waves from some open subset $\cS$ of $N$ and capturing them in another subset $\cR$. 
In particular, we define the local source-to-solution map of the problem \eqref{eq:Cauchy_Problem} associated to the open sets $\cS, \cR \subset N$ as
\begin{equation}
\label{eq:S_to_S_map}
\Lambda_{\cS,\cR,T}^{A,q}\colon C^\infty_0((0,T)\times \cS) \to C^\infty((0,T)\times \cR),
\quad 
\Lambda_{\cS,\cR,T}^{A,q}(f)=u^f|_{(0,T)\times \cR},
\end{equation}
and provide an answer to the following inverse problem.
\begin{invprob}
\label{ip}
What are the sufficient assumptions for $\cS$, $\cR$, and $T>0$ such that the respective source-to-solution map $\Lambda_{\cS,\cR,T}^{A,q}$ determines the co-vector field $A$ and the function $q$ uniquely?
\end{invprob}

Inverse Problem \ref{ip} has a natural gauge due to the first order term.
Indeed, if $\kappa$ is a smooth complex-valued  unitary function on $N$ that equals 1 in the set $\cS \cup \cR$,  it follows from an argument similar to the one in \cite[Proposition 2.31]{saksala2025inverse} that $\kappa\L \kappa^{-1}=\mathcal{L}_{A+i\kappa^{-1}d\kappa,q}$ is also a magnetic Schr\"odinger operator, and the associated source-to-solution map satisfies the equation 
\[
\Lambda^{A,q}_{\cS,\cR,T}(f)
= \Lambda^{A+i\kappa^{-1}d\kappa,q}_{\cS,\cR,T}(f) \quad \text{for all $f \in C_0^\infty((0,\infty)\times \cS)$}.
\]
Hence, the local source-to-solution map $\Lambda^{A,q}_{\cS,\cR,T}$ does not uniquely determine the co-vector field $A$. In this paper we prove the converse of this result under two geometric conditions that are both independent of $A$ and $q$.

In what follows we use the notations $TN$ and $SN$ for the tangent and unit sphere bundles of $(N,g)$, respectively. In particular, since $N$ is compact, each $(x,\xi) \in SN$ determines a unique unit speed geodesic $\gamma_{x,\xi}$ from $\R$ to $N$ given by the initial conditions $\gamma_{x,\xi}(0)=x, \: \dot\gamma_{x,\xi}(0)=\xi$. The notation $d(\cdot,\cdot)$ is reserved for the Riemannian distance function and should not be confused with the exterior derivative introduced previously. 

We now set the following two standing assumptions:

\noindent
\textbf{Assumption 1:}
\textbf{Geometric Control Condition from the set $\cR$}
\begin{align}
\label{eq:GCC}
\tag{A1}
\text{For each $(x,\xi)\in SN$, there exists $t \in [0,T]$ such that $\gamma_{x,\xi}(t) \in \cR$}. 
\end{align}

\noindent
\textbf{Assumption 2:} \textbf{Symmetry of distances with respect to the set $\cR$}
\begin{equation}
\label{eq:assumption_rev}
\tag{A2}
\begin{aligned}
&\text{Every point } x\in  N \text{ has an open neighborhood } U\subset N \text{ such that for all distinct }p,q\in U,
\\
&\text{there exist } y,z \in\cR 
\text{ such that $d(p,z)<d(q,z)$ and $d(q,y)<d(p,y)$}.    
\end{aligned}
\end{equation}

Our main theorem is as follows:

\begin{thm}
\label{thm:basic_uniqueness_thm}
Let $(N,g)$ be a smooth closed Riemannian manifold and $T>\diam(N,g)$. Let $\cR\subset N$ be an open set that satisfies the properties  \eqref{eq:GCC} and \eqref{eq:assumption_rev}.
If $\cS \subset N$ is any open set, and $(A_i,q_i)$, $i \in \{1,2\}$, are two pairs of a smooth co-vector field and a smooth function such that 
\begin{equation}
\label{eq:hypothesis}
\Lambda_{\cS,\cR,2T}^{A_1,q_1}=\Lambda_{\cS,\cR,2T}^{A_2,q_2},
\end{equation}
then $q_1=q_2$ and there exists a smooth unitary function $\kappa\colon N \to \C$ such that  $\kappa(z)=1$ for all $z \in \cR \cup \cS$ and 
\[
A_1=A_2+i\kappa^{-1}d\kappa.
\]
\end{thm}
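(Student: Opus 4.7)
The plan is to adapt the Boundary Control (BC) method to the closed-manifold, source-to-solution setting of \cite{source-to-solution, saksala2025inverse}, with the extra twist that the source set $\cS$ and receiver set $\cR$ are disjoint. The first step, which is the only place the self-adjoint structure of $\mathcal{L}_{A,q}$ is used, is a Blagoveščenskii-type identity: pairing any two solutions of \eqref{eq:Cauchy_Problem} on the $2T$-spacetime and solving the resulting $1+1$-dimensional wave equation for the cross-term $\langle u^f(t), u^h(s)\rangle$ in the $(t,s)$-plane shows that $\Lambda_{\cS,\cR,2T}^{A,q}$ determines the $L^2(N)$-inner products $\langle u^f(T), u^h(T)\rangle$ for all $f \in C_0^\infty((0,T)\times \cS)$ and $h \in C_0^\infty((0,T)\times \cR)$. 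Hence \eqref{eq:hypothesis} yields
\[
\langle u_1^f(T), u_1^h(T)\rangle_{L^2(N)} = \langle u_2^f(T), u_2^h(T)\rangle_{L^2(N)}
\]
for all such $f, h$, where $u_i^f$ denotes the solution of \eqref{eq:Cauchy_Problem} associated to $(A_i, q_i)$.

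Next I would combine two complementary controllability results. From the side of $\cS$, Tataru's sharp unique continuation principle \cite{tataru1995unique} --- applicable because the coefficients are time-independent --- together with a Hahn--Banach duality gives \emph{approximate} controllability: since $T > \diam(N,g)$, the set $\{u_i^f(T) : f \in C_0^\infty((0,T)\times \cS)\}$ is dense in $L^2(N)$ for $i = 1, 2$. From the side of $\cR$, the geometric control condition \eqref{eq:GCC} together with a standard HUM-type argument gives \emph{exact} controllability: the map $h \mapsto u_i^h(T)$ from $L^2((0,T)\times \cR)$ to $L^2(N)$ is surjective. Combining these facts with the inner-product identity, I would define a bounded linear isometry $V : L^2(N) \to L^2(N)$ by $V u_1^h(T) := u_2^h(T)$; well-definedness and isometry follow from the inner-product equality, surjectivity from exact controllability on the target side, and one checks that $V$ also pairs the $\cS$-generated waves $u_1^f(T) \mapsto u_2^f(T)$.

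The technical core is to show that $V$ is a multiplication operator by a unitary function. Here the family of distance functions $r_z(x) = d(x,z)$, $z \in \cR$, enters: finite speed of propagation together with BC-style cutoff arguments let me read off from the data the indicators of the open metric balls $\{x \in N : d(x,z) < s\}$, and hence identify the foliation of $N$ by level sets of $r_z$ for every $z \in \cR$. The local symmetry condition \eqref{eq:assumption_rev} then guarantees that this family of distance functions separates nearby points of $N$ in a two-sided manner: any two distinct $p, q$ in the neighborhood $U$ can be flipped in distance by two different receivers $y, z \in \cR$, which forces $V$ to respect the point structure rather than permute nearby points. Consequently $V$ must reduce to pointwise multiplication by some measurable unitary $\kappa : N \to \C$, which equals $1$ on $\cS \cup \cR$ because $V$ acts as the identity on the waves generated or observed there.

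The final step upgrades the multiplicative identity $u_2^f = \kappa\, u_1^f$ to the claimed gauge equivalence of operators. Substituting this into $\square_{A_2, q_2} u_2^f = f$ and subtracting $\square_{A_1, q_1} u_1^f = f$, the difference $\mathcal{L}_{A_2,q_2}(\kappa\, \cdot) - \kappa\, \mathcal{L}_{A_1,q_1}(\cdot)$ annihilates the dense family $\{u_1^f(T)\}$; matching first- and zeroth-order coefficients and using elliptic regularity yields $\kappa \in C^\infty(N)$, $q_1 = q_2$, and $A_1 = A_2 + i \kappa^{-1} d\kappa$. The main obstacle, and the raison d'être of the two standing assumptions, is the disjointness of $\cS$ and $\cR$: unlike the same-set setting of \cite{saksala2025inverse}, one loses direct control of wave norms in the source set, so I must couple the weak (approximate) control from $\cS$ via Tataru with the strong (exact) control from $\cR$ via the GCC, and crucially exploit \eqref{eq:assumption_rev} to ensure that this coupling identifies \emph{points} of $N$ rather than merely $L^2$-subspaces.
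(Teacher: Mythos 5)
Your overall strategy (Blagoveščenskii identity, approximate controllability from $\cS$ via Tataru, exact controllability from $\cR$ via \eqref{eq:GCC}, then localization using \eqref{eq:assumption_rev}) is the right skeleton, but the central step of your argument contains a genuine error. You define $V$ on $\{u_1^h(T,\cdot): h \in L^2((0,T)\times\cR)\}$ by $V u_1^h(T,\cdot) = u_2^h(T,\cdot)$ and claim it is an \emph{isometry} "from the inner-product equality." The Blagoveščenskii identity available from the data $\Lambda_{\cS,\cR,2T}$ only yields the mixed pairings $\langle u_1^f(T),u_1^h(T)\rangle = \langle u_2^f(T),u_2^h(T)\rangle$ with $f$ supported in $\cS$ and $h$ in $\cR$; it does \emph{not} give $\|u_1^h(T)\| = \|u_2^h(T)\|$ for $\cR$-sources, since no $\cR$-to-$\cR$ (or $\cR$-to-$\cS$) data is assumed. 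This is precisely the difficulty of the disjoint-data problem: the paper only obtains a two-sided norm \emph{comparability} with an unspecified constant, and even that requires a nontrivial argument (identifying the common kernel $F_1=F_2$ via approximate controllability, passing to the quotient space, and invoking the bounded inverse theorem against the surjectivity supplied by \eqref{eq:GCC}), not a one-line consequence of the inner-product identity. Relatedly, your statement that $h \mapsto u_i^h(T,\cdot)$ is surjective from $L^2((0,T)\times\cR)$ onto $L^2(N)$ is false: the control operator for an interior $L^2$ source is smoothing in space, so its image lies in $H^1(N)$, and the $L^2$-onto property holds only for the time derivatives $\partial_t u^h(T,\cdot)$. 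The paper flags exactly this point and is forced to carry parallel identities for waves and their time derivatives (e.g. the second identity in Lemma \ref{lem:new_blago} and the Riesz-representation step in Lemma \ref{lem:u_1_=_kappa_u_2_at_x_0}); your proposal does not account for it.

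Beyond this, the "technical core" — that the correspondence between the two wave families is pointwise multiplication by a function — is asserted rather than proved: "reading off indicators of metric balls" and "$V$ respects the point structure" is not an argument. The paper's mechanism is concrete and quantitative: using \eqref{eq:assumption_rev} it builds sets $X_k$ collapsing to a point $x_0$ from modified domains of influence $M(\cR,h_{k,x_0})$, approximates $1_{X_k}/\mathrm{Vol}(X_k)$ by $u_1$-waves from $\cS$, shows (via the comparability constant and exact controllability of the time derivatives) that the corresponding $u_2$-waves converge weakly to functions $v_k$ supported essentially in $\overline{X_k}$ with $\|v_k\| \lesssim \mathrm{Vol}(X_k)^{-1/2}$, and then applies the Lebesgue differentiation theorem to get $\kappa(x_0)u_2^\phi(T,x_0) = u_1^\phi(T,x_0)$; none of this machinery, in particular the volume-normalized bound that makes the limit nondegenerate, appears in your sketch. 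Finally, your closing claims that $\kappa$ is unitary and equals $1$ on $\cS\cup\cR$ "because $V$ acts as the identity there" also need proof: the hypothesis only says the two families of waves agree on $\cR$ when sourced in $\cS$, and the paper establishes $|\kappa|\equiv 1$ and $\kappa|_{\cS\cup\cR}=1$ by first proving $|\kappa|$ is locally constant from the magnetic Schr\"odinger structure, gluing, and then reducing to the same-set uniqueness theorem of \cite{saksala2025inverse} via $\Lambda_{\cS,\cS,2T}^{A_1,q_1}=\Lambda_{\cS,\cS,2T}^{A_2,q_2}$.
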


\subsection{Novelty and scope of this paper}

If $\cR \cap \cS \neq \emptyset$, as recently established in \cite{saksala2025inverse} by the third and the fourth authors, the conclusion of Theorem \ref{thm:basic_uniqueness_thm} holds without any geometric assumptions. Thus, the main contribution of the current paper lies in the extension of this result to the case when the sets $\cR$ and $\cS$ are disjoint. In order to pursue such an extension, we need to impose the geometric assumptions \eqref{eq:GCC} and \eqref{eq:assumption_rev} on the receiver set $\cR$.  

For instance, the set $\cR$ satisfies the conditions \eqref{eq:GCC} and \eqref{eq:assumption_rev} whenever $(N,g)$ is a closed Riemannian manifold, and the open set $\cR\subset N$ is sufficiently  large that $N\setminus \cR$ is contained in a simple Riemannian manifold. 
Here a simple manifold means a compact Riemannian manifold with smooth strictly convex boundary such that every pair of points is connected by a unique smoothly varying distance minimizing geodesic. These properties imply that simple manifolds do not have any trapped geodesics, as all geodesics have a distance minimizing extension up to the boundary. Hence, none of them can be longer than the diameter of the manifold. The geometric assumption that the complement of the set $\cR$ is simple was  used very recently in \cite{feizmohammadi2025inverse}. 

There are several examples when the condition \eqref{eq:GCC} holds without requiring the  convexity of the complement of $\cR$. For instance, this occurs when $N$ is a sphere and $\cR$ contains a neighborhood of a half circle connecting two antipodal points, or   when $N$ is the flat torus obtained by gluing together the edges of a square and the set $\cR$ is a neighborhood of the edges. Moreover, as the following proposition indicates, the condition \eqref{eq:GCC} is stronger than the property \eqref{eq:assumption_rev} in some geometries:

\begin{prop}
\label{prop:example_GCC}
If an open subset $\cR \subset \mathbb{S}^2$ satisfies  \eqref{eq:GCC}, it also satisfies   \eqref{eq:assumption_rev}.
Moreover,  the conclusion of Theorem \ref{thm:basic_uniqueness_thm} holds in this case.
\end{prop}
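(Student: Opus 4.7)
The plan is to reduce Proposition~\ref{prop:example_GCC} to a direct application of Theorem~\ref{thm:basic_uniqueness_thm} by deriving the local symmetry condition \eqref{eq:assumption_rev} from the geometric control condition \eqref{eq:GCC} on $\mathbb{S}^2$. The starting observation is that every unit-speed geodesic on the sphere traces a great circle with period $2\pi$, so \eqref{eq:GCC} (which we may assume to hold for all sufficiently large $T$, since enlarging $T$ preserves the condition) is equivalent to the purely set-theoretic statement that $\cR$ intersects every great circle of $\mathbb{S}^2$.

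To verify \eqref{eq:assumption_rev}, I would fix $x \in \mathbb{S}^2$ and take $U$ to be an open ball around $x$ of radius less than $\pi/2$, so that no two points in $U$ are antipodal. For distinct $p,q \in U$, the perpendicular bisector $\sigma_{p,q} = \{r \in \mathbb{S}^2 : p \cdot r = q \cdot r\}$ is then a well-defined great circle, and it splits $\mathbb{S}^2$ into two open hemispheres $H_p = \{r : d(p,r) < d(q,r)\}$ and $H_q = \{r : d(q,r) < d(p,r)\}$. The proposition reduces to showing that both $\cR \cap H_p$ and $\cR \cap H_q$ are non-empty, since any points selected from these intersections serve as the required $z$ and $y$.

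The key claim is proved by contradiction. If $\cR \cap H_q = \emptyset$, then $\cR \subset H_p \cup \sigma_{p,q}$. Since $\cR$ is open and $\sigma_{p,q}$ is a smooth codimension-one submanifold forming the common boundary of $H_p$ and $H_q$, any point $r \in \cR \cap \sigma_{p,q}$ would admit an open neighborhood contained in $\cR$ that necessarily meets $H_q$, contradicting our standing assumption. Hence $\cR \subset H_p$ and $\cR \cap \sigma_{p,q} = \emptyset$; but $\sigma_{p,q}$ is a great circle, which contradicts the reformulation of \eqref{eq:GCC}. The symmetric argument handles $H_p$, so \eqref{eq:assumption_rev} holds on the chosen $U$.

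With \eqref{eq:assumption_rev} established, I would enlarge the time horizon if necessary so that $T > \diam(\mathbb{S}^2)$ while maintaining \eqref{eq:GCC}, and then invoke Theorem~\ref{thm:basic_uniqueness_thm} directly to conclude the uniqueness statement up to the natural gauge. I expect no serious obstacle: the whole argument hinges on the two elementary facts that perpendicular bisectors on $\mathbb{S}^2$ are themselves great circles and that an open set of $\mathbb{S}^2$ contained in a closed hemisphere must lie in its interior. The only mild subtlety is arranging the neighborhood $U$ so the perpendicular bisector is unambiguously defined, which is why we restrict the radius to avoid antipodal pairs.
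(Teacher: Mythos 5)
Your argument is correct and is essentially the paper's proof in direct rather than contrapositive form: both hinge on the facts that the equidistant set of two distinct nearby points of $\mathbb{S}^2$ is a great circle and that an open set contained in a closed hemisphere cannot meet its equator, so a failure of \eqref{eq:assumption_rev} would produce a great circle (geodesic) disjoint from $\cR$, contradicting \eqref{eq:GCC}. Your extra care in taking $U$ of radius less than $\pi/2$ to keep the bisector well defined, and the remark that enlarging $T$ preserves \eqref{eq:GCC}, are harmless refinements of the same argument.
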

\begin{proof}
Suppose, for the sake of contradiction,   that $\cR$ satisfies \eqref{eq:GCC} but not \eqref{eq:assumption_rev}.
Let $x \in \mathbb{S}^2$. Since $\cR$ does not satisfy  \eqref{eq:assumption_rev}, for each $r\in (0,\pi]$ there exist distinct points $p,q \in B(x,r) \subset \mathbb{S}^2$ such that $d(z,p)\leq d(z,q)$ for all $z \in \cR$.

After a rotation if necessary, we assume that in polar coordinates $\{(\theta,\varphi) :\theta\in [-\pi,\pi),\ \varphi\in [-\frac{\pi}{2},\frac{\pi}{2}]\}$, we have $p=(0,a)$ and $q=(0,-a)$ for some $0<a< r$. 
We then define the set  
\[
\Sigma:=\{z\in \mathbb{S}^2 \mid d(z,p)\leq d(z,q)\}, 
\]
and note that $\Sigma$ is the closed hemisphere containing $p$, whose equator $\gamma=\p \Sigma$ intersects the half circle $\{(0,\varphi): \: \varphi\in [-\frac{\pi}{2},\frac{\pi}{2}] \}$ orthogonally at $(0,0)$. 
Since $\cR\subset \Sigma$ and  $\cR$ is open, we also have   $\gamma \cap \cR =\emptyset$. Hence, the property \eqref{eq:GCC} does not hold either. This completes the proof of Proposition \ref{prop:example_GCC}.
\end{proof}
\noindent
In light of this result, it seems possible that Theorem \ref{thm:basic_uniqueness_thm} could be proved without explicitly assuming \eqref{eq:assumption_rev}. However, this generalization  is beyond the scope of the present paper.

Our main result can be seen as a closed manifold analog of \cite{kian2019unique}, where  the authors established the recovery of the lower order terms from the hyperbolic Dirichlet-to-Neumann map of the initial boundary value problem
\begin{align}
\label{eq:IBVP}
\begin{cases}
\Box v_h=0   & \text{ in } (0,T)\times M,
\\
v_h(0,\cdot)=\p_t v_h(0,\cdot)=0 & \text{ in } M,
\\
v_h=h & \text{ on } (0,T)\times \p M,
\end{cases}
\end{align}
where $(M,g)$ is a smooth compact Riemannian manifold with a smooth boundary $\p M$. In \cite{kian2019unique} the Dirichlet data is supported on a subset of the boundary that satisfies the condition \eqref{eq:GCC} for billiard geodesics, whereas the subset of the boundary where the Neumann data is collected needs to be strictly convex. As we have discussed above, the convexity assumption of \cite{kian2019unique} is stronger than our assumption \eqref{eq:assumption_rev}, and in our paper we provide a global determination of the lower order terms, while \cite{kian2019unique}  provides the determination of $A$ and $q$ only in the convex hull of the set where the Neumann data is measured. Moreover, we show that the gauge $\kappa\equiv 1$ in $\cR \cup \cS$, whereas in \cite{kian2019unique} this was only established on $\cR$.

Inverse Problem \ref{ip} considered in this paper has an interesting uncommon feature. It is known that the recovery of the leading order terms, namely the metric $g$, from the source-to-solution map, as in \eqref{eq:S_to_S_map}, can be accomplished under weaker assumptions than those required for the recovery of lower order terms presented in this paper. For instance, it was established in \cite{Lassas_Nul_Oksanen_Ylinen} that in the absence of the lower order terms, the source-to-solution map determines the Riemannian metric $g$ up to isometry if the set $\cR$ satisfies a weaker version of Assumption \eqref{eq:GCC}.
We shall discuss this condition in detail in Section \ref{sec:from_CGG_to_Exact_controllabity}, where we also show that  \eqref{eq:GCC} yields the control operator of the Cauchy problem \eqref{eq:Cauchy_Problem}
\begin{equation}
\label{eq:control_op}
\begin{aligned}
C : L^2((0,T)\times \cR)\to H^1(N)\times L^2(N), \quad 
C(f):=(-u^{{f}}(T,\cdot),\p_t u^{{f}}(T,\cdot)),
\end{aligned}
\end{equation}
where $\supp (f)\subset [0,T] \times \overline \cR$, being onto. This property is called the exact controllability of the problem \eqref{eq:Cauchy_Problem} from the set $\cR$. The initial boundary value problem \eqref{eq:IBVP} has an analogous control operator
\[
\begin{aligned}
\tilde C : L^2((0,T)\times  \Gamma)\to L^2(N)\times H^{-1}(N), \quad 
\tilde C(h):=(-v_{h}(T,\cdot),\p_t v_{h}(T,\cdot)),
\end{aligned}
\]
whenever $\Gamma \subset \p M$ is open and $\supp(h) \subset [0,T]\times \overline \Gamma$. The key difference between these two control operators is that $C$ is smoothing in the spatial variables but $\tilde C$ is not. In particular, the  exact controllability of  $\tilde C$ guarantees an $L^2$-to-$L^2$-onto property for the waves, while for $C$ the $L^2$-to-$L^2$-onto property only holds for the time derivatives of the waves. In our approach to solve Inverse Problem \ref{ip}, we need to take this into account and prove several key identities for the waves and their time derivatives. This complication was not present in \cite{kian2019unique} due to the mapping properties of the operator $\tilde C$.

\subsection{Previous literature}
In inverse problems of determining coefficients of PDEs,
it is common to use
the Dirichlet-to-Neumann (DN) map or the Cauchy data to model measurements.
Aside  from their applications in seismic explorations and medical ultrasound imaging, this choice is reasonable since several other types of data can be reduced to or are equivalent to the DN map \cite{KKL,Katchalov_data_equivalence}.
Meanwhile, in this paper we shall adapt the techniques originally developed for the DN map case to the case of local source-to-solution maps instead of reducing Inverse Problem \ref{ip} to the DN map. 

Let us next turn our attention to inverse problems with disjoint partial data, which are significant in many fields such as medical imaging, geology, physics, and engineering. In this challenging measurement setting all the results that we are aware of rely on a version of the exact controllability of the hyperbolic equation. The works that we survey next are based research direction that was first initiated in  
\cite{belishev1997uniqueness, kurylev1997multidimensional} and studied the recovery of the coefficients appearing in a hyperbolic partial differential operator from the full DN map under under an exact controllability assumption.
The authors of \cite{lassas2014inverse} showed that the DN map for the wave equation, where the Dirichlet and Neumann data are measured on some disjoint relatively open subsets of the boundary, uniquely determines a compact Riemannian manifold with boundary up to an isometry. This result was obtained under the Hassell–Tao condition for eigenvalues and eigenfunctions of the Dirichlet Laplacian on the subset of the boundary where the Neumann data is measured. This condition is valid for instance if the respective wave equation is exactly controllable. 
It was subsequently established in \cite{kian2019unique} that the disjoint partial DN map determines the first order perturbation of the wave operator uniquely up to the natural gauge invariance in a neighborhood of the set where the Neumann data is measured. This result was obtained by assuming that this set is strictly convex, and that the wave equation is exactly controllable from the set where the Dirichlet data is supported. In addition, this work also contains a global result under a convex foliation condition.
We refer readers to \cite{Imanuvilov_Uhlmann_Yamamoto,Rakesh_Horn,Rakesh_Sacks} for several more disjoint partial data results.

We now survey some known literature related to inverse problems involving the source-to-solution map. When   sources and solutions are measured on the same open subset of a closed manifold, the authors of \cite{source-to-solution} showed that the source-to-solution map for the wave operator determines the Riemannian metric up to  isometry. A similar result for the fractional diffusion equation was obtained in \cite{Helin_Lassas_Ylinen_Zhang}. Furthermore, a very recent result \cite{saksala2025inverse} by the third and fourth authors extended \cite{source-to-solution} and showed that the local source-to-solution map of the hyperbolic operator \eqref{eq:hyperbolic_operator} determines a complete Riemannian manifold, as well as time-independent lower order terms, up to their natural obstructions.  

To the best of our knowledge, the only work addressing the case where the source and solution are measured on disjoint subsets of a closed manifold is \cite{Lassas_Nul_Oksanen_Ylinen}, where the authors established an analogue of the result in \cite{lassas2014inverse}, i.e., the determination of the Riemannian manifold. Their analysis was carried out in the absence of lower-order terms and required a spectral bound condition for the Laplace spectrum that is analogous to the Hassel-Tao condition considered in \cite{lassas2014inverse}.

Our proof strategy is built on the celebrated Boundary Control method (BC method), that was put forward by Belishev \cite{belishev_russian, belishev1990wave} and was further developed by its author and others \cite{avdonin1992boundary, belishev1989application, belishev1987nonstationary, belishev1989inverse} by combining finite-speed arguments with control theory to solve inverse problems for the wave equation in Euclidean domains. A turning point in the study of inverse problems for wave equation happened in 1995 when Tataru proved a Holmgren-type uniqueness theorem for wave equations with nonanalytic coefficients \cite{tataru1995unique, tataru1999unique}. This and the earlier results on the boundary control method by Belishev and Kurylev in \cite{belishev1992reconstruction} solved the inverse problem for the wave equation on a Riemannian manifold with complete boundary data. 
In the current paper we rely on an adaptation of the BC-method for the case of a closed manifold and the source-to-solution map that first appeared in \cite{source-to-solution}.
We refer readers to \cite{KKL} for a thorough review of related literature.

Due to their dependency on the unique continuation principle presented in \cite{tataru1995unique}, our methods cannot be extended to the cases when the coefficients of the hyperbolic operator \eqref{eq:hyperbolic_operator} are time-dependent. The closest adaptations of the BC-method to studies in this direction were obtained recently in \cite{Alexakis_Feiz_Oksanen_22,Alexakis_Feiz_Oksanen_23}, where the authors showed that time-dependent zeroth order coefficients can be recovered from the DN map if the known globally hyperbolic Lorentzian manifold satisfies certain curvature bounds. In the ultra-static case as in the current paper, many works such as
\cite{bellassoued2011stability,feizmohammadi2021recovery,Kian_Oksanen, liu2023partial,  liu2025recovery} make use of geometric optics solutions to reduce the unique recovery of  time-dependent or the stable recovery of  time-independent lower order terms to an integral geometry problem.

Finally, let us mention that we have restricted our discussion about the BC-method only to the unique recovery of  coefficients and geometry. Several variants of the BC-method have been studied computationally \cite{belishev1999dynamical,Hoop2016,kabanikhin2004direct, oksanen2024linearized,pestov2010numerical}, and related stability questions have been investigated in \cite{anderson2004boundary, Bosi2022Reconstruction,burago2020quantitative, feizmohammadi2021recovery, korpela2016regularization, liu2025h, Liu2012}, among others. 

\subsection{Outline of the paper}

This paper is organized as follows. 
In Section \ref{sec:from_CGG_to_Exact_controllabity} we explain why the Cauchy problem \eqref{eq:Cauchy_Problem} is exactly controlled from the set $\cR$ at time $T>\diam(N,g)$ whenever $\cR$ satisfies the geometric control condition \eqref{eq:GCC}.

The purpose of Section \ref{sec:prelim} is to 
collect several key tools needed to prove Theorem \ref{thm:basic_uniqueness_thm}, which have been established in earlier works. These include the higher order approximate controllability (Proposition \ref{thm:approximate_controllability}) and Blagoveščenskii's identity (Lemma \ref{lem:Blago_identity}), as well as their immediate implications. 

Section \ref{sec:proof} is devoted to the proof of Theorem \ref{thm:basic_uniqueness_thm}, and it contains three subsections. In Subsection \ref{subsec:controllable_sets}, for each point $x_0\in N$,  we construct neighborhoods $(X_k)_{k=1}^\infty$ that collapse to $x_0$. The normalized characteristic functions $\frac{1_{X_k}}{\text{Vol}(1_{X_k})}$ are then approximated with waves $u_1^{f_{jk}}(T)$ that solve the Cauchy problem \eqref{eq:Cauchy_Problem} with $A=A_1$ and $q=q_1$, where the smooth sources $f_{jk}$ are supported in $(0,T) \times \cS$, see Propositions \ref{prop:Z-set} and \ref{prop:sets_X_k_x_0}. In this construction we utilize the local symmetry assumption \eqref{eq:adjusted_dist_function} for distances, and it will be crucial in the final parts of the proof. In Subsection \ref{subsec:correspondence} we utilize exact controllability (Proposition \ref{prop:L2_control}) to show that the waves $u^h_1$ and $u_2^h$ and their time derivatives have comparable $L^2$-norms, when the source $h$ is supported in $(0,T)\times \cR$. This is formalized in Lemma \ref{lm:exact_control}, and is the first place where we need the geometric control condition \eqref{eq:GCC}. Finally, in Subsection \ref{subsec:proof} we combine everything and prove Theorem \ref{thm:basic_uniqueness_thm}. The key argument is to show that for the sources $f_{jk}$ as above, the corresponding sequence of waves $u_2^{f_{jk}}$ converges weakly to some function $v_k$ whose norm is comparable with the volume of $X_k$. Then we apply the Lebesgue differentiation theorem together with  Blagoveščenskii's identity to first prove the local existence of a smooth non-vanishing function $\kappa$ satisfying the equation
\[
\kappa(x) u_2^\phi(T,x)=u_1^\phi(T,x).
\]
Here $x \in N$ is near $x_0$ and $\phi$ is any smooth compactly supported source function in a certain open subset of $(0,T)\times \cR$. In Lemma \ref{lem:from_equality_of_waves_to_equality_of_LOTs} we show that this equation yields the identities
\[
q_1=q_2 
\quad \text{ and } \quad 
A_1=A_2+i\kappa^{-1}d\kappa
\]
near $x_0$. 
The final piece in the proof is to show that such an equation holds globally, and that the obtained function $\kappa$ is unitary and equals to one in $\cS \cup \cR$. The verification of the former claim is part of the main proof, and the latter is justified in Lemma \ref{lm:kappa_trans_solution}.

\subsection*{Acknowledgments}
M.L. was partially supported by the Advanced Grant project
101097198 of the European Research Council, Centre of Excellence of Research
Council of Finland (grant 336786) and the FAME flagship of the Research Council
of Finland (grant 359186). B.L. was partially supported by the  Simons Foundation Travel Support for Mathematicians (MPS-TSM-00013766). 
T.S. was partially supported by the National Science Foundation (DMS-2204997, DMS-2510272) and the  Simons Foundation Travel Support for Mathematicians (MPS-TSM-00013291). A.S. was partially supported by the National Science Foundation (DMS-2204997). Z.Z. was supported by the Finnish Ministry of Education and Culture’s Pilot for Doctoral Programmes (Pilot project Mathematics of Sensing, Imaging and Modelling).

Views and opinions expressed are those of the authors only and do not necessarily reflect those of the European Union or the other funding organizations. Neither the European Union nor the other funding organizations can be held responsible for them. 

\section{From the Geometric Control Condition \eqref{eq:GCC} to the Exact Controllability of the Cauchy Problem \eqref{eq:Cauchy_Problem}}
\label{sec:from_CGG_to_Exact_controllabity}

We recall that due to the wellposedness of the Cauchy problem \eqref{eq:Cauchy_Problem}, for each $T>0$, the control operator $C$, as defined in \eqref{eq:control_op}, is a bounded linear operator.
We now introduce the following classical definition:
\begin{defn}
\label{def:exact_control}
The Cauchy problem \eqref{eq:Cauchy_Problem} is said to be exactly controllable from the set $\cR$ at time $T$ if the control operator $C$ is onto.
\end{defn}
Our aim in this section is to show that the assumption \eqref{eq:GCC} implies the exact controllability of the problem \eqref{eq:Cauchy_Problem} from $\cR$ at time $T$. To achieve this, consider the observability operator
\[
\begin{aligned}
C^* : H^{-1}(N)&\times L^2(N)\to L^2((0,T)\times \cR), 
\quad 
C^\ast(v_T,u_t):=w|_{(0,T)\times \cR},
\end{aligned}
\]
where the function $w$ solves the   terminal value problem
\begin{align}
\label{eq:dual_cauchy_Problem}
\begin{cases}
\Box w(t,x)=0 & \text{ in } (0,T)\times N,
\\
w(T,\cdot)=u_1 & \text{ in } N,
\\ 
\p_t w(T,\cdot)=v_1 & \text{ in } N.
\end{cases}
\end{align}
By the wellposedness of the hyperbolic equation, $C^*$ is a continuous operator.
Let us now the following definition of observability:
\begin{defn}
We say that the system \eqref{eq:dual_cauchy_Problem} is observable from the set $\cR$ at time $T$ if there exists a constant $C>0$ such that
\[
\norm{v_1}_{H^{-1}(N)}+\norm{u_1}_{L^2(N)}\leq C\norm{C^\ast(v_T,u_t)}_{L^2((0,T)\times \cR)}.
\]
\end{defn}
\begin{prop}
If $(N,g)$ is a closed Riemannian manifold, and an open set $\cR\subset N$ satisfies the assumption \eqref{eq:GCC},  the system \eqref{eq:dual_cauchy_Problem} is observable.
\end{prop}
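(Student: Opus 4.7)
The plan is to establish the observability inequality by the now-standard contradiction-plus-microlocal-defect-measure argument of Bardos--Lebeau--Rauch, adapted to our closed manifold setting where the situation is slightly easier than the boundary case because there are no glancing rays to worry about.

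Suppose the conclusion fails. Then there exists a sequence of Cauchy data $(v_T^n, u_T^n)\in H^{-1}(N)\times L^2(N)$ such that the corresponding solutions $w_n$ of \eqref{eq:dual_cauchy_Problem} satisfy
\[
\norm{v_T^n}_{H^{-1}(N)}+\norm{u_T^n}_{L^2(N)}=1,
\qquad
\norm{w_n}_{L^2((0,T)\times \cR)} \longrightarrow 0.
\]
After passing to a subsequence I may assume that $(v_T^n, u_T^n)$ converges weakly to some $(v_T, u_T)$ and that the corresponding $w_n$ converges weakly to the solution $w$ with this terminal data. The first step is to show that $w\equiv 0$: since $w|_{(0,T)\times \cR}=0$ and the coefficients of $\Box$ are time-independent and smooth, Tataru's unique continuation principle \cite{tataru1995unique} (applied after reversing time, and using $T>\diam(N,g)$, which is implicit here since observability at time $T$ is only expected for such $T$) forces $w\equiv 0$ on $(0,T)\times N$, hence $(v_T,u_T)=(0,0)$.

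The second and main step is to rule out concentration of mass in the high-frequency regime. Consider the rescaled sequence $\tilde w_n := w_n/\|(v_T^n,u_T^n)\|_{H^{-1}\times L^2}$; by weak-to-strong lower semicontinuity of the norms and the previous step, $\tilde w_n \rightharpoonup 0$, but its energy is normalized to $1$. Associate to $\tilde w_n$ its microlocal defect measure $\mu$ on $T^\ast((0,T)\times N)\setminus 0$, constructed as a non-negative Radon measure from the weak-$\ast$ limits of $\inner{\mathrm{Op}(a)\tilde w_n,\tilde w_n}$ for zero-th order pseudodifferential symbols $a$. Two standard properties are decisive: (i) $\mu$ is supported in the characteristic set $\{\tau^2 = |\xi|_g^2\}$ of $\Box$, and is invariant under the Hamiltonian flow of the principal symbol, whose projection to $(t,x)$-space traces out reparametrized geodesics of $(N,g)$; (ii) since $\tilde w_n \to 0$ strongly in $L^2((0,T)\times \cR)$, the measure $\mu$ vanishes over $(0,T)\times \cR$. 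The geometric control condition \eqref{eq:GCC} then does the work: every null bicharacteristic above $(0,T)\times N$ projects to a geodesic on $N$ which must enter $\cR$ within time at most $T$, so by flow-invariance $\mu$ vanishes on the entire characteristic set, i.e.\ $\mu\equiv 0$. By the standard compensation inequality relating the $H^1\times L^2$ energy norm of solutions to the sum of a lower-order term (handled by strong convergence to zero in $L^2_{\mathrm{loc}}$) and the defect-measure mass on the characteristic set, this forces $\|\tilde w_n\|_{\mathrm{energy}}\to 0$, contradicting the normalization.

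The hard part is step two: one has to verify the propagation of the defect measure in the $H^{-1}\times L^2$ energy level rather than the more usual $H^1\times L^2$ level. This is handled by applying a spatial regularization (e.g.\ composing with $(1-\Delta_g)^{-1/2}$) that raises the regularity by one derivative in the $x$-variable and yields a solution of the same wave equation modulo a compact lower-order perturbation; the microlocal defect measure is unaffected by the compact remainder and by the elliptic conjugation, so the conclusion transfers back. The low-frequency strong compactness needed to discard the lower-order terms is immediate because the compact embedding $L^2(N)\hookrightarrow H^{-1}(N)$ upgrades the weak convergence $(v_T^n,u_T^n)\rightharpoonup 0$ to strong convergence of $w_n$ in $C([0,T];L^2(N)\times H^{-1}(N))$ by finite speed of propagation and the continuity of the wave propagator. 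Combining these ingredients closes the contradiction and proves the observability inequality.
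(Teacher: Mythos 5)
The paper itself does not prove this proposition; it simply cites the classical geometric-control results (Rauch--Taylor, Bardos--Lebeau--Rauch, Laurent--L\'eautaud). Your proposal instead re-derives the classical theorem, and the overall strategy you choose (contradiction, microlocal defect measure, propagation along null bicharacteristics, plus a uniqueness step for the weak limit) is indeed the standard one behind those references. However, as written your argument has a genuine gap in the uniqueness step.

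The gap is your first step. From $\Box w=0$ and $w=0$ on $(0,T)\times \cR$, Tataru's theorem gives vanishing of $w$ only on the double cone $\{(t,x): d(x,\cR)<\min(t,\,T-t)\}$, so it forces $w\equiv 0$ only when $T>2\sup_{x\in N}d(x,\cR)$. The proposition assumes only \eqref{eq:GCC} at time $T$, which gives $\sup_{x}d(x,\cR)\le T$, and even the main theorem's hypothesis $T>\diam(N,g)$ only gives $\sup_x d(x,\cR)<T$; neither supplies the needed factor of two, so the assertion that Tataru's unique continuation ``forces $w\equiv 0$'' is unjustified under the stated hypotheses (and $T>\diam(N,g)$ is not even a hypothesis of this proposition). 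The standard repair is to reverse the order of your two steps and run the compactness--uniqueness argument: the defect-measure step first yields an observability inequality with a compact (lower-order) remainder; this shows that the space of invisible solutions, i.e.\ solutions vanishing on $(0,T)\times\cR$, is finite dimensional; since the coefficients are time independent, this space is invariant under time differentiation, hence contains an eigenfunction of the elliptic operator $\mathcal{L}_{A,q}$ vanishing on the open set $\cR$, which vanishes identically by elliptic unique continuation. That route needs no largeness of $T$ beyond the geometric control condition itself.

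Two secondary points. First, the propagation step requires every null bicharacteristic over the \emph{open} cylinder $(0,T)\times N$ to pass over $(0,T)\times\cR$; starting from an interior time $t_0$ the available window is shorter than $T$, so one really needs the rays to meet $\cR$ in time strictly less than $T$ (this is how the classical statements are phrased), a point your sketch elides. Second, the claim that the compact embedding $L^2(N)\hookrightarrow H^{-1}(N)$ upgrades the weak convergence of the data to strong convergence of $w_n$ in $C([0,T];L^2(N)\times H^{-1}(N))$ is false as stated: strong convergence at the same energy level is exactly what a high-frequency defect could obstruct. What one gets, and what suffices to discard the lower-order terms, is strong convergence in weaker norms such as $C([0,T];H^{-1}(N)\times H^{-2}(N))$.
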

\begin{proof}
See for instance \cite{Bardos92}, \cite[Remark 1.1 and Theorem 1.2] {LLTT17}, or \cite[Section 3]{Rauch74}.
\end{proof}

We claim that the observability operator $C^*$ is the adjoint of the control operator $C$. Indeed, for any function $f\in L^2((0,T)\times \cR)$, we denote by $\tilde f$  the zero extension of $f$ to $(0,T)\times N$. Let  $(v_1,u_1)\in C^\infty(N)\times C^\infty(N)$,
then due to the initial conditions in the Cauchy problem \eqref{eq:Cauchy_Problem},  after integrating by parts, we have
\[
w(T,\cdot)\p_t u^{\widetilde{f}}(T,\cdot) =  \int_0^T   \p_t^2 w(t,\cdot)  u^{\widetilde{f}}(t,\cdot) dt+ \int_0^T \p_t w(t,\cdot) \p_t u^{\widetilde{f}}(t,\cdot) dt,
\]
and
\[
\p_t w(T,\cdot) u^{\widetilde{f}}(T,\cdot) = \int_0^T   w(t,\cdot)\p_t^2 u^{\widetilde{f}}(t,\cdot) dt+ \int_0^T \p_t w(t,\cdot) \p_t u^{\widetilde{f}}(t,\cdot) dt.
\]
Hence, it follows that
\[
\begin{aligned}
\inner{C(f),(v_1,u_1)}
&:=
\inner{-u^{\widetilde{f}}(T,\cdot),v_1}+\inner{\p_t u^{\widetilde{f}}(T,\cdot),u_1}
\\
&=
\inner{-u^{\widetilde{f}}(T,\cdot),\p_t w(T,\cdot)}+\inner{\p_t u^{\widetilde{f}}(T,\cdot),w(T,\cdot)}
\\
&=
\int_0^T \inner{w(t,\cdot),\p_t^2 u^{\widetilde{f}}(t,\cdot)}-\inner{\p_t^2 w(t,\cdot),u^{\widetilde{f}}(t,\cdot)} dt
\\
&=
\int_0^T \inner{w(t,\cdot),\widetilde{f}-\L u^{\widetilde{f}}(t,\cdot)}+\inner{\L w(t,\cdot),u^{\widetilde{f}}(t,\cdot)} dt
\\
&=
\int_0^T \inner{w(t,\cdot),\L u^{\widetilde{f}}(t,\cdot)+\widetilde{f}}
-
\inner{w(t,\cdot),\L u^{\widetilde{f}}(t,\cdot)} dt
\\
&=
\int_0^T \inner{w(t,\cdot),f}\, dt
\\
&=\inner{f,C^*(v_1,u_1)}_{L^2((0,T)\times\cR)}.
\end{aligned}
\]
For any $(v_T,u_T)\in H^{-1}(N)\times L^2(N)$, we can find a sequence $\{(v_i,u_i)\}_{i=1}^\infty$ in $C^\infty(N)\times C^\infty(N)$ such that $(v_i,u_i)\to (v_T,u_T)$ in $H^{-1}(N)\times L^2(N)$ as $i\to\infty$. Hence, by the observability of the system \eqref{eq:dual_cauchy_Problem}, we have
\begin{align*}
\inner{C(f),(v_T,u_T)}&=\lim_{i\to\infty}\inner{C(f),(v_i,u_i)}=\lim_{i\to\infty}\inner{f,C^*(v_i,u_i)}_{L^2((0,T)\times\cR)}\\
&=\inner{f,C^*(v_T,u_T)}_{L^2((0,T)\times\cR)}.
\end{align*}
This proves our claim.

Since the operators $C$ and $C^\ast$ are adjoint to each other, according to \cite[Theorem 2.1]{Dolecki77},  the system \eqref{eq:Cauchy_Problem} is exactly controllable from the set $\cR$ at time $T$ if and only if the system \eqref{eq:dual_cauchy_Problem} is observable from the set $\cR$ at time $T$. To summarize, we have the following result.

\begin{prop}
\label{prop:L2_control}
Let $(N,g)$ be a smooth closed Riemannian manifold and $T>\diam(N,g)$. Let $\cR\subset N$ be an open set that satisfies the property \eqref{eq:GCC}.
If $A$ is a real-valued smooth co-vector field and $q$ is a smooth real-valued function in $N$,  the control operator $C$, as defined in \eqref{eq:control_op}, is onto.
\end{prop}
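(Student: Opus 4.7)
The plan is to follow the standard Hilbert Uniqueness Method (HUM) approach and deduce exact controllability from observability via functional-analytic duality. First I would identify the natural candidate for the adjoint, namely the observability operator $C^\ast\colon H^{-1}(N)\times L^2(N)\to L^2((0,T)\times\cR)$ defined by $C^\ast(v_1,u_1):=w|_{(0,T)\times\cR}$, where $w$ solves the backward Cauchy problem \eqref{eq:dual_cauchy_Problem}. By the wellposedness of the hyperbolic problem this operator is bounded.

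Next I would verify the duality identity $\langle C(f),(v_1,u_1)\rangle=\langle f,C^\ast(v_1,u_1)\rangle_{L^2((0,T)\times\cR)}$. For smooth data $(v_1,u_1)\in C^\infty(N)\times C^\infty(N)$, this follows by integrating by parts twice in time (using the vanishing initial data of $u^{\widetilde f}$ at $t=0$) and once more using that $\L$ is formally self-adjoint so the spatial parts cancel, leaving only the source term $f$ paired with $w$. The density of $C^\infty(N)\times C^\infty(N)$ in $H^{-1}(N)\times L^2(N)$ together with continuity of both $C$ and $C^\ast$ extends the identity to all admissible data, establishing $C^\ast$ as the genuine Hilbert-space adjoint of $C$.

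The core analytic input is the observability inequality
\[
\|v_1\|_{H^{-1}(N)}+\|u_1\|_{L^2(N)}\leq C\|C^\ast(v_1,u_1)\|_{L^2((0,T)\times\cR)},
\]
which holds under \eqref{eq:GCC} with $T>\diam(N,g)$ by the classical results of Bardos--Lebeau--Rauch and their refinements (cited in the excerpt). This estimate is equivalent to $C^\ast$ being bounded below, hence injective with closed range.

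Finally I would invoke the standard duality principle (e.g.\ Dolecki--Russell, \cite[Theorem 2.1]{Dolecki77}): a bounded linear map between Hilbert spaces is surjective if and only if its adjoint is bounded below. Applying this to $C$ and $C^\ast$ immediately yields that $C$ is onto, which is the desired exact controllability. I do not foresee any real obstacle here, since every ingredient (wellposedness, integration by parts, the observability theorem under \eqref{eq:GCC}, and the duality principle) is available off the shelf; the only mildly delicate point is carefully tracking the spaces on each side so that the $H^{-1}$--$H^1$ pairing used in defining $C^\ast$ matches the pairing used in the duality identity.
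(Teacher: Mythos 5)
Your proposal is correct and follows essentially the same route as the paper: identify the observability operator for the terminal value problem \eqref{eq:dual_cauchy_Problem} as the adjoint of $C$ via integration by parts and density, invoke the observability estimate under \eqref{eq:GCC} from the Bardos--Lebeau--Rauch circle of results, and conclude surjectivity of $C$ from the duality principle (the paper cites \cite[Theorem 2.1]{Dolecki77} for exactly this step).
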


In the absence of the lower order terms $A$ and $q$, the magnetic Shr\"odinger operator $\L$ is indeed the usual Laplace-Beltrami operator $-\Delta_g$ of $(N,g)$. In this case, it is established in \cite[Theorem 1]{HPT19} that the exact controllability of the wave equation from the set $\cR$ at time $T$ implies the existence of a constant $C>0$ such that
\begin{equation}
\label{eq:quantum_chaos}
C<\inf_{\phi\in \mathcal{E}}\frac{\norm{\phi}_{L^2(\cR)}}{\norm{\phi}_{L^2(N)}},
\end{equation}
where $\mathcal{E}$ is the set of all non-constant eigenfunctions   of $-\Delta_g$. We note that, in light of the example provided in \cite[p.753]{HPT19}, the condition \eqref{eq:quantum_chaos} is strictly weaker than exact controllability. Moreover, the authors of \cite{Lassas_Nul_Oksanen_Ylinen} showed that when the condition \eqref{eq:quantum_chaos} is satisfied, one can determine the isometry type of a closed Riemannian manifold from the local source-to-solution map associated with the problem \eqref{eq:Cauchy_Problem} with disjoint source and receiver sets. It is also established in \cite[Lemma 5.1]{feizmohammadi2024calderonproblemfractionalschrodinger} that \eqref{eq:quantum_chaos} is satisfied when $N\setminus\cR$ is nontrapping, i.e. all geodesics that start in $N\setminus\cR$ exit in finite time.

\section{Preliminaries}
\label{sec:prelim}
In this short section we introduce several necessary  key tools that have been established in some earlier works  to prove the main theorem. 
For each open set $\mathcal{V} \subset N$ and $T>0$, we define the respective domain of influence as
\[
M(\mathcal{V},T):=\{x\in  N\mid \inf_{y\in \V}d(x,y)\leq T\}.
\]

By $H_0^k(M(\mathcal{V},T))$ and $k \in \{0,1,\ldots\}$, we mean the completion of $C_0^\infty(M(\mathcal{V},T)^\circ$ with respect to $\norm{\cdot}_{H^k(N)}$-norm. Here the notation $A^\circ$ stands for the interior of a subset $A$ of $N$.
We first introduce an approximate controllability result for higher order Sobolev spaces. For $k$ large enough, this yields the existence of waves that do not vanish at a given point in space.

\begin{prop}[Higher order approximate controllability]
\label{thm:approximate_controllability}

For any $T > 0$ and $k \in \{0,1,\ldots\}$, the set
\begin{align*}
\mathcal{W}_T := 
\{w^f(T,\cdot) \in C^\infty(N):  
w^f \text{ solves the problem \eqref{eq:Cauchy_Problem} with source }
f\in C_0^\infty((0,T)\times\mathcal{S})
\}
\end{align*}
is dense in $H_0^k(M(\mathcal{S},T))$ with respect to the $H^k(N)$-topology.

In particular, for each $f \in C_0^\infty((0,T)\times\mathcal{S})$ we have 
\[
\text{supp} \ (u^f(T,\cdot)) \subset M(\mathcal{S},T)^\circ.
\]
\end{prop}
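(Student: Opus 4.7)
The proof is classical; I would follow the Belishev--Tataru duality argument from the BC-method literature, combined with Tataru's sharp unique continuation principle \cite{tataru1995unique}. The in-particular support claim is immediate from finite speed of propagation for $\Box$: a fixed $f \in C_0^\infty((0,T)\times \cS)$ is supported in $[\delta, T-\delta]\times \cS$ for some $\delta > 0$, so $u^f(T,\cdot)$ vanishes outside $\{x \in N : d(x,\cS) \leq T-\delta\} \subset M(\cS,T)^\circ$.

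For the density when $k = 0$, suppose $\phi \in L^2(N)$ is supported in $M(\cS, T)$ and is $L^2$-orthogonal to every element of $\mathcal{W}_T$. I would introduce the backward wave $v$ solving $\Box v = 0$ on $(0,T) \times N$ with $v(T,\cdot) = 0$ and $\partial_t v(T,\cdot) = -\phi$; well-posedness uses the self-adjointness of $\mathcal{L}$, which is precisely where the symmetric magnetic Schr\"odinger structure $(d+iA)^*(d+iA)+q$ is needed. Integration by parts against $u^f$ then produces
\begin{equation*}
0 \;=\; \langle \phi, u^f(T,\cdot)\rangle_{L^2(N)} \;=\; \int_0^T \int_N v(t,x)\, f(t,x)\, dV_g(x)\, dt,
\end{equation*}
so $v \equiv 0$ on $(0,T) \times \cS$. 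Extending $v$ by odd reflection $V(t,x) := -v(2T-t, x)$ for $t \in [T, 2T)$ yields a $C^1$ solution of $\Box V = 0$ on $(0, 2T)\times N$ (time-reversal invariance is available because $\mathcal{L}$ carries no $t$-derivatives), and $V$ vanishes on the whole strip $(0, 2T) \times \cS$. Tataru's theorem then propagates this to the double cone $\{(t,x) : d(x, \cS) < \min(t, 2T - t)\}$, whose $t = T$ slice is precisely $M(\cS, T)^\circ$; evaluating $\partial_t V(T, \cdot) = -\phi$ there gives $\phi = 0$.

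For the bootstrap to $k \geq 1$, I would use the commutation identity. Since any $f \in C_0^\infty((0,T) \times \cS)$ vanishes near both $t = 0$ and $t = T$, differentiating the Cauchy problem gives $\partial_t^j w^f = w^{\partial_t^j f}$, and iterating the wave equation $\partial_t^2 w^f = -\mathcal{L} w^f + f$ (using $\partial_t^{2j} f(T,\cdot) = 0$) yields
\begin{equation*}
\mathcal{L}^j w^f(T,\cdot) \;=\; (-1)^j\, w^{\partial_t^{2j}f}(T,\cdot), \qquad j \geq 0.
\end{equation*}
Equipping $H_0^k(M(\cS,T))$ with the equivalent norm $\bigl(\sum_{j \leq k} \|\mathcal{L}^j \cdot\|_{L^2(N)}^2\bigr)^{1/2}$, the $H^k$-orthogonality of a candidate $\psi$ to $\mathcal{W}_T$ becomes the $L^2$-pairing of the compactly supported distribution $\tilde{\phi} := \sum_{j \leq k}(\mathcal{L}^*)^j \mathcal{L}^j \psi \in H^{-k}(N)$ against the same family of waves. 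Rerunning the Step~2 duality at $H^{-k}$ regularity forces $\tilde{\phi} = 0$, and elliptic invertibility of the strictly positive operator $\sum_{j \leq k}(\mathcal{L}^*)^j \mathcal{L}^j$ then yields $\psi = 0$.

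The main obstacle is the geometric arrangement of Tataru's unique continuation so that the vanishing region covers $M(\cS, T)$ at the terminal slice $t = T$; without the odd reflection the naive double cone degenerates to a point set at $t = T$ and carries no information about $\phi$. A secondary subtlety in the higher-order case is a rigorous distributional formulation of the backward wave $v$ at $H^{-k}$ regularity, so that the integration-by-parts identity and subsequent application of Tataru's theorem remain valid for the non-smooth data $\tilde{\phi}$.
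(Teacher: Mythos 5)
Your support claim and your $k=0$ argument (backward wave with terminal data $(0,-\phi)$, odd reflection in time about $t=T$, Tataru's unique continuation from the cylinder $(0,2T)\times\cS$) are correct, and they are precisely the classical route behind the proof that the paper itself only cites from \cite{saksala2025inverse}. One small imprecision there: the $t=T$ slice of the double cone is $\{x:\, d(x,\cS)<T\}$, which can be strictly smaller than $M(\cS,T)^\circ$, so to conclude $\phi=0$ for $k=0$ you should add that the level set $\{d(\cdot,\cS)=T\}$ has measure zero (the distance function satisfies $|\nabla d|=1$ a.e.), which is enough for an $L^2$ function.

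The genuine gap is in the bootstrap to $k\ge 1$. First, $\bigl(\sum_{j\le k}\|\mathcal{L}^j u\|_{L^2(N)}^2\bigr)^{1/2}$ is not an equivalent norm on $H^k$: since $\mathcal{L}^j$ has order $2j$, elliptic regularity makes this expression comparable to $\|u\|_{H^{2k}(N)}$, and it is not even finite on all of $H_0^k(M(\cS,T))$; consequently your orthogonality reduction addresses the wrong topology, and the regularity bookkeeping is off ($\tilde{\phi}=\sum_{j\le k}\mathcal{L}^{2j}\psi$ lies in $H^{k-4k}(N)$, not $H^{-k}(N)$). Your commutation identity $\mathcal{L}^j w^f(T,\cdot)=(-1)^j w^{\partial_t^{2j}f}(T,\cdot)$ is correct and is the right tool, but it must be paired with an inner product built from fractional powers, e.g. $\langle (c+\mathcal{L})^{k/2}u,(c+\mathcal{L})^{k/2}v\rangle_{L^2(N)}$ with $c$ large so that $c+\mathcal{L}>0$, so that $H^k$-orthogonality becomes the pairing of $\tilde{\phi}:=(c+\mathcal{L})^{k}\psi\in H^{-k}(N)$ against the waves. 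Second, and more seriously, the step ``rerunning the Step 2 duality at $H^{-k}$ regularity forces $\tilde{\phi}=0$'' does not follow as stated: unique continuation only yields that $\tilde{\phi}$ vanishes on the open set $\{d(\cdot,\cS)<T\}$, i.e. $\supp\tilde{\phi}\subset\{d(\cdot,\cS)=T\}$. For $k=0$ this finishes because that set is null, but a distribution of negative order supported in a null set need not vanish; moreover every wave $u^f(T,\cdot)$ is supported in $\{d(\cdot,\cS)\le T-\delta_f\}$ for some $\delta_f>0$, so any distribution carried by $\{d(\cdot,\cS)=T\}$ automatically annihilates all of $\mathcal{W}_T$, and orthogonality alone cannot exclude it. This is not merely technical: if $\{d(\cdot,\cS)=T\}$ meets $M(\cS,T)^\circ$ (an interior local maximum of the distance with value exactly $T$) and $k>n/2$, then all waves vanish at such a point while $H^k\hookrightarrow C^0$, so the density assertion at such borderline times requires genuine care. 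In the regime the paper actually uses, $T>\diam(N,g)$, the level set is empty and this difficulty disappears, but as written your $k\ge 1$ step does not go through.
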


\begin{proof}
This result follows from the finite speed of wave propagation and the unique continuation principle from\cite{tataru1995unique}. We refer readers to  \cite[Corollary 2.15 and Theorem 2.26]{saksala2025inverse} for the full proof.
\end{proof}

\begin{rem}
The $L^2$-version of the approximate controllability was established in \cite{source-to-solution} for the solutions of the standard wave equation. 
\end{rem}

\begin{cor}[Nonvanishing wave]
\label{cor:nonvanishing_condition}
Let $T>0$, $\cS \subset N$ be an open set, and let $x_0 \in N$ be such that $d_g(x_0,\cS) < T$. Then there exists a function $f\in C_0^\infty((0,T)\times \cS)$ such that the corresponding smooth solution $u^f$ of the Cauchy problem \eqref{eq:Cauchy_Problem} does not vanish at $(T,x_0)$.
\end{cor}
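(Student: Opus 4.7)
The plan is to deduce the existence of such a source function directly from the higher order approximate controllability result stated in Proposition~\ref{thm:approximate_controllability}, combined with the Sobolev embedding $H^k(N)\hookrightarrow C(N)$ for $k$ sufficiently large.

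First I would verify that $x_0$ lies in the \emph{interior} of the domain of influence $M(\cS,T)$. Since $d_g(x_0,\cS)<T$, pick $\varepsilon>0$ with $d_g(x_0,\cS)+\varepsilon<T$. Then for every $x$ in the open metric ball $B(x_0,\varepsilon)$, the triangle inequality gives
\[
d_g(x,\cS)\leq d_g(x,x_0)+d_g(x_0,\cS)<\varepsilon+\bigl(T-\varepsilon\bigr)=T,
\]
so $B(x_0,\varepsilon)\subset M(\cS,T)^\circ$. In particular, $x_0\in M(\cS,T)^\circ$, so there exists a bump function $\phi\in C_0^\infty(M(\cS,T)^\circ)$ with $\phi(x_0)=1$.

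Next I would fix an integer $k>n/2$ where $n=\dim N$, so that the Sobolev embedding $H^k(N)\hookrightarrow C(N)$ is continuous, and hence $H^k$-convergence implies uniform convergence on $N$. Applying Proposition~\ref{thm:approximate_controllability} with this $k$, the set $\mathcal{W}_T$ is dense in $H_0^k(M(\cS,T))$ in the $H^k(N)$-topology, and $\phi\in C_0^\infty(M(\cS,T)^\circ)\subset H_0^k(M(\cS,T))$, so I can choose a sequence $f_j\in C_0^\infty((0,T)\times\cS)$ with
\[
u^{f_j}(T,\cdot)\longrightarrow \phi \quad\text{in } H^k(N).
\]
By the Sobolev embedding this convergence is uniform, so in particular
\[
u^{f_j}(T,x_0)\longrightarrow \phi(x_0)=1.
\]
Consequently, for all sufficiently large $j$ we have $u^{f_j}(T,x_0)\neq 0$, and taking $f:=f_j$ for any such $j$ proves the claim.

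There is no real obstacle here beyond checking that $x_0$ belongs to the open set $M(\cS,T)^\circ$ so that a compactly supported test function separating $x_0$ from the complement exists; the heavy lifting is entirely absorbed into Proposition~\ref{thm:approximate_controllability}, whose nontrivial ingredient is Tataru's unique continuation principle.
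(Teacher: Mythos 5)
Your argument is correct and is essentially the intended one: the paper simply defers to \cite[Corollary 2.28]{saksala2025inverse}, where the claim is obtained in the same way, namely by noting $x_0\in M(\cS,T)^\circ$, invoking the higher order approximate controllability of Proposition \ref{thm:approximate_controllability} with $k>n/2$, and using the Sobolev embedding $H^k(N)\hookrightarrow C(N)$ to upgrade $H^k$-approximation of a bump function to pointwise nonvanishing at $x_0$. No gaps to report.
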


\begin{proof}
See \cite[Corollary 2.28]{saksala2025inverse} for the proof.
\end{proof}

We also need the following important identity that links the solutions of   two different hyperbolic PDEs together in a certain global sense if Hypothesis \eqref{eq:hypothesis} is valid. This identity, originating from \cite{blagoveshchenskii1971inverse, blagovestchenskii1969one}, is crucial to establish  Theorem \ref{thm:basic_uniqueness_thm}.

\begin{lem}[Blagoveščenskii's identity]
\label{lem:Blago_identity}
Let $(N,g)$ be a smooth closed Riemannian manifold. Let $\cS,\cR \subset N$ be open sets. For $i\in \{1,2\}$, let $A_i$ be a smooth real-valued co-vector field, and let $q_i$ be a smooth real-valued function. If   
$\Lambda_{\cS,\cR,2T}^{A_1,q_1}=\Lambda_{\cS,\cR,2T}^{A_2,q_2}$,  the equation
\begin{equation}
\label{eq:Blago}
(u_1^f(T,\cdot),u_1^h(T,\cdot))_{L^2(N)}
=
(u_2^f(T,\cdot),u_2^h(T,\cdot))_{L^2(N)}
\end{equation}
holds for all functions $f\in C^\infty_0((0,2T)\times \cS)$ and $h\in C^\infty_0((0,2T)\times \cR)$, where $u_i^f$ and $u^h_i$ are the solutions to the Cauchy problem \eqref{eq:Cauchy_Problem} with $A=A_i$ and $q=q_i$ and sources $f$ and $h$, respectively. 
\end{lem}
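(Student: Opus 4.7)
The plan is to follow the classical Blagove\v{s}\v{c}enskii strategy. For $i\in\{1,2\}$, introduce the bilinear quantity
\[
K_i(t,s) := (u_i^f(t,\cdot), u_i^h(s,\cdot))_{L^2(N)},\quad (t,s)\in[0,2T]^2.
\]
Differentiating under the integral sign, substituting $\p_t^2 u_i^f = f - \mathcal{L}_{A_i,q_i} u_i^f$ and the analogous equation for $u_i^h$, and exploiting the self-adjointness of $\mathcal{L}_{A_i,q_i}$ on $L^2(N)$ (which makes $(\mathcal{L}_{A_i,q_i}u_i^f,u_i^h)-(u_i^f,\mathcal{L}_{A_i,q_i}u_i^h)$ vanish), I obtain
\[
(\p_t^2-\p_s^2) K_i(t,s) = (f(t,\cdot),u_i^h(s,\cdot))_{L^2(N)} - (u_i^f(t,\cdot),h(s,\cdot))_{L^2(N)}.
\]
The zero Cauchy data in \eqref{eq:Cauchy_Problem} yield $K_i(0,s)=\p_t K_i(0,s)=0$ for all $s\in[0,2T]$. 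If I can show that the right-hand side is independent of $i\in\{1,2\}$, then $\Phi:=K_1-K_2$ will solve the homogeneous $1{+}1$-dimensional wave equation $(\p_t^2-\p_s^2)\Phi=0$ with vanishing Cauchy data on $\{t=0\}$; d'Alembert's formula then gives $\Phi\equiv 0$ on $[0,2T]^2$, and evaluating at $(T,T)$ produces \eqref{eq:Blago}.

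The second bracket in the display above is easy: since $\supp h \subset (0,2T)\times\cR$, it equals $(\Lambda_{\cS,\cR,2T}^{A_i,q_i}f(t,\cdot),h(s,\cdot))_{L^2(\cR)}$, which is $i$-independent by \eqref{eq:hypothesis}. The harder task is showing that $(f(t,\cdot),u_i^h(s,\cdot))_{L^2(N)} = (f(t,\cdot),u_i^h(s,\cdot)|_\cS)_{L^2(\cS)}$ is also $i$-independent, i.e., the reverse reciprocity $u_1^h|_\cS = u_2^h|_\cS$ for every $h\in C_0^\infty((0,2T)\times\cR)$. This does not follow directly from \eqref{eq:hypothesis}, and the plan is to reduce the reverse direction to the forward one via time-reversal. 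For $h\in C_0^\infty((0,2T)\times\cR)$, set $h^\ast(t,x):=h(2T-t,x)\in C_0^\infty((0,2T)\times\cR)$ and check directly that the time-reversed function $v_i^h(t,x):=u_i^{h^\ast}(2T-t,x)$ solves the backward Cauchy problem
\[
\p_t^2 v_i^h + \mathcal{L}_{A_i,q_i} v_i^h = h\text{ in }(0,2T)\times N,\quad v_i^h(2T,\cdot)=\p_t v_i^h(2T,\cdot)=0.
\]
Integrating by parts twice in time on $(0,2T)\times N$, using the self-adjointness of $\mathcal{L}_{A_i,q_i}$ together with the complementary vanishing of the Cauchy (resp.\ terminal) data of $u_i^f$ (resp.\ $v_i^h$), I get
\[
\int_0^{2T}\!\int_\cS f(t,x)\,u_i^{h^\ast}(2T-t,x)\,dx\,dt \;=\; \int_0^{2T}\!\int_\cR (\Lambda_{\cS,\cR,2T}^{A_i,q_i}f)(t,x)\,h(t,x)\,dx\,dt.
\]
The right-hand side is $i$-independent by \eqref{eq:hypothesis}, so the left-hand side is too; since this holds for every $f\in C_0^\infty((0,2T)\times\cS)$, which is dense in $L^2((0,2T)\times\cS)$, the smooth trace $u_i^{h^\ast}(2T-t,\cdot)|_\cS$ is determined by $\Lambda_{\cS,\cR,2T}^{A_i,q_i}$. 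Because $h\mapsto h^\ast$ is a bijection on $C_0^\infty((0,2T)\times\cR)$, this yields the desired reciprocity.

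Putting everything together, the right-hand side of the differential identity for $K_i$ is $i$-independent, so $(\p_t^2-\p_s^2)(K_1-K_2)\equiv 0$ on $[0,2T]^2$. Combined with the vanishing Cauchy data, d'Alembert's formula forces $K_1\equiv K_2$, and in particular $K_1(T,T)=K_2(T,T)$, which is \eqref{eq:Blago}. The main obstacle is the reciprocity step: the hypothesis controls only the forward source-to-solution map from $\cS$ to $\cR$, so bridging to the reverse direction relies crucially on the auxiliary backward Cauchy problem \emph{and} the self-adjointness of $\mathcal{L}_{A_i,q_i}$, which is precisely why the assumption that $A_i$ and $q_i$ are real-valued cannot be dropped.
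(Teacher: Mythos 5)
Your proposal is essentially the paper's own argument: the paper simply defers to \cite[Lemmas 3.1 and 3.2]{Lassas_Nul_Oksanen_Ylinen}, which consist of exactly your two steps — a time-reversal/duality identity showing the data determine the reverse map $u_i^h|_{(0,2T)\times\cS}$, and the $1{+}1$-dimensional wave computation for $K_i(t,s)$ with vanishing Cauchy data at $t=0$, evaluated at the apex $(T,T)$. One small fix: since the waves are complex-valued and $\L_{A,q}$ is self-adjoint only for the Hermitian $L^2$ pairing (its bilinear transpose is $\mathcal{L}_{-A,q}$), your duality identity should carry complex conjugates, e.g. $\int_0^{2T}\!\int_\cS f\,\overline{u_i^{h^\ast}(2T-t,\cdot)}\,dV\,dt=\int_0^{2T}\!\int_\cR (\Lambda_{\cS,\cR,2T}^{A_i,q_i}f)\,\overline{h}\,dV\,dt$, which does not affect the conclusion.
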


\begin{proof}
Since the magnetic Schr\"odinger operators $\L$ for $A=A_i$ and $q=q_i$ defined in \eqref{eq:mag_schro_op} are self-adjoint, the equation \eqref{eq:Blago} can be derived by following the same arguments as in \cite[Lemmas 3.1 and 3.2]{Lassas_Nul_Oksanen_Ylinen}. Therefore, we shall omit the details.
\end{proof}

Moreover, we also have the Blagoveščenskii's identities for $L^2$-regular sources as follows.

\begin{lem}
\label{lem:new_blago}
Let $f\in L^2((0,2T)\times \cS)$ and $h\in L^2((0,2T)\times \cR)$. Under the same hypotheses as in Lemma \ref{lem:Blago_identity}, we have  
\begin{equation}
\label{eq:new_blago_id}
\begin{aligned}
\inner{u_1^f(T,\cdot), u_1^h(T,\cdot)}=\inner{u_2^f(T,\cdot), u_2^h(T,\cdot)} 
\quad \text{ and } \quad 
\inner{u_1^f(T,\cdot),\p_t u_1^h(T,\cdot)}=\inner{u_2^f(T,\cdot),\p_t u_2^h(T,\cdot)}.
\end{aligned}
\end{equation}
\end{lem}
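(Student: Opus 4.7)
The plan is to extend Blagoveščenskii's identity from Lemma \ref{lem:Blago_identity} to $L^2$ sources by a density argument, and to reduce the second identity to the first via the time-translation structure of the Cauchy problem \eqref{eq:Cauchy_Problem}. The essential input is the energy estimate supplied by wellposedness of \eqref{eq:Cauchy_Problem} (see \cite[Chapter 6]{taylor_partial}):
\[
\|u_i^F(T,\cdot)\|_{H^1(N)}+\|\p_t u_i^F(T,\cdot)\|_{L^2(N)}
\le C\|F\|_{L^2((0,2T)\times N)},
\]
which implies that both of the maps $F\mapsto u_i^F(T,\cdot)$ and $F\mapsto \p_t u_i^F(T,\cdot)$ extend continuously from $C_0^\infty$-sources to bounded linear operators $L^2((0,2T)\times N)\to L^2(N)$. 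This is essentially the boundedness of the control operator $C$ recalled in Section \ref{sec:from_CGG_to_Exact_controllabity}.

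For the first identity in \eqref{eq:new_blago_id}, I would pick sequences $f_n\in C_0^\infty((0,2T)\times \cS)$ with $f_n\to f$ in $L^2$ and $h_n\in C_0^\infty((0,2T)\times \cR)$ with $h_n\to h$ in $L^2$, apply Lemma \ref{lem:Blago_identity} to each pair $(f_n,h_n)$, and pass to the limit. The passage to the limit is immediate from the continuity above and the Cauchy–Schwarz inequality, since each factor on each side converges in $L^2(N)$.

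For the second identity, the key observation is that whenever $h_n\in C_0^\infty((0,2T)\times \cR)$ the function $h_n$ vanishes near $t=0$, so differentiating the Cauchy problem \eqref{eq:Cauchy_Problem} in $t$ shows that $\p_t u_i^{h_n}$ itself solves \eqref{eq:Cauchy_Problem} with source $\p_t h_n\in C_0^\infty((0,2T)\times \cR)$ and vanishing initial data (the second initial condition $\p_t^2 u_i^{h_n}(0,\cdot)=h_n(0,\cdot)-\L u_i^{h_n}(0,\cdot)=0$ follows from the equation). By uniqueness,
\[
\p_t u_i^{h_n}(T,\cdot)=u_i^{\p_t h_n}(T,\cdot).
\]
Applying Lemma \ref{lem:Blago_identity} with $h$ replaced by $\p_t h_n$ therefore gives
\[
\inner{u_1^{f_n}(T,\cdot),\p_t u_1^{h_n}(T,\cdot)}=\inner{u_2^{f_n}(T,\cdot),\p_t u_2^{h_n}(T,\cdot)}.
\]
I would then let $n\to\infty$: the factors $u_i^{f_n}(T,\cdot)$ converge in $L^2(N)$, and the factors $\p_t u_i^{h_n}(T,\cdot)$ converge in $L^2(N)$ to $\p_t u_i^h(T,\cdot)$, the latter by the continuous extension of $h\mapsto \p_t u_i^h(T,\cdot)$ to $L^2((0,2T)\times\cR)$. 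Importantly, this bypasses any need to differentiate the $L^2$-source $h$ itself; the differentiation has already been absorbed into the definition of the continuous map $h\mapsto \p_t u_i^h(T,\cdot)$ on $L^2$.

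The only real obstacle is bookkeeping of regularity: one must be sure that wellposedness of \eqref{eq:Cauchy_Problem} yields the two continuity statements $F\mapsto u_i^F(T,\cdot)$ and $F\mapsto \p_t u_i^F(T,\cdot)$ into $L^2(N)$ with $F\in L^2((0,2T)\times N)$, and that the uniqueness class for \eqref{eq:Cauchy_Problem} with an $L^2$-source is wide enough to justify the identity $\p_t u^{h_n}=u^{\p_t h_n}$ when $h_n$ is smooth. Once these standard facts are in hand, both identities follow from a one-line density argument.
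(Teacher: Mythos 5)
Your proposal is correct and matches the paper's argument: both prove the identity for smooth compactly supported sources via the observation that $\p_t u_i^{\psi}$ solves \eqref{eq:Cauchy_Problem} with source $\p_t\psi$ (using time-independence of the coefficients and the vanishing initial data), apply Lemma \ref{lem:Blago_identity}, and then pass to $L^2$ sources by density together with the wellposedness estimates for $F\mapsto u_i^F(T,\cdot)$ and $F\mapsto \p_t u_i^F(T,\cdot)$. No substantive differences.
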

\begin{proof}
We only present a proof for the latter equation, as the former follows from a simpler version of the same argument.

Let us first notice that $C_0^\infty((0,2T)\times \cS)$ and $C_0^\infty((0,2T)\times \cR)$ are dense in $L^2((0,2T)\times\cS)$ and $L^2((0,2T)\times\cR)$, respectively. Then we can find sequences $\{\phi_k\}_{k=1}^\infty\subset C_0^\infty((0,2T)\times \cS)$ and $\{\psi_j\}_{j=1}^\infty\subset C_0^\infty((0,2T)\times \cR)$ such that $\phi_k\to f$ in $L^2((0,2T)\times\cS)$ and $\psi_j\to h$ in $L^2((0,2T)\times\cR)$.
Due to the wellpossedness and the linearity of the Cauchy problem \eqref{eq:Cauchy_Problem}, we obtain for both $i \in \{1,2\}$ that
\[
\norm{u_i^f(T,\cdot)-u_i^{\phi_k}(T,\cdot)}_{L^2(N)}
\leq C_1 \norm{f-\phi_k}_{L^2((0,2T)\times\cS)}
\]
and 
\[
\norm{\p_t u_i^h(T,\cdot)-\p_t u_i^{\psi_j}(T,\cdot)}_{L^2(N)}\leq C_2 \norm{h-\psi_j}_{L^2((0,2T)\times\cR)},
\]
for some uniform constants $C_1,C_2>0$. Therefore, as $k \to \infty$ and $j \to \infty$, it follows that $u_i^{\phi_k}(T,\cdot)\to u_i^{f}(T,\cdot)$ and $\p_t u_i^{\psi_j}(T,\cdot)\to \p_t u_i^h(T,\cdot)$ in $L^2(N)$ for both $i \in \{1,2\}$.

We also observe that if $\psi_j \in C_0^\infty((0,2T)\times \cR)$, then the smooth function $\p_t u_i^{\psi_j}$ solves the Cauchy problem \eqref{eq:Cauchy_Problem} with the interior source $\p_t \psi_j \in C_0^\infty((0,2T)\times \cR)$. Indeed, since the coefficients $A_i$ and $q_i$ are time-independent, we differentiate the equation 
\begin{equation}
\label{eq:wave_eq_for_u_psi}
\square_{A_i,q_i}  u_i^{\psi_j}= \psi_j \quad \text{ in } (0,2T)\times N
\end{equation}
with respect to $t$ to get $
\square_{A_i,q_i} (\p_t u_i^{\psi_j})=\p_t \psi_j
$. Moreover, due to the initial conditions posed in \eqref{eq:Cauchy_Problem}, we have $u_i^{\psi_j}(0,\cdot)=\p_t u_i^{\psi_j}(0,\cdot)=0$. Hence, we get from \eqref{eq:wave_eq_for_u_psi} and the assumption $\psi_j \in C_0^\infty((0,2T)\times \cR)$ that  $\p_t^2 u_i^{\psi_j}(0,\cdot)=0$. 

Therefore, Lemma \ref{lem:Blago_identity} yields that
\[
\inner{u_1^{\phi_k}(T,\cdot),\p_t u_1^{\psi_j}(T,\cdot)}_{L^2(N)}
=
\inner{ u_2^{\phi_k}(T,\cdot),\p_t u_2^{\psi_j}(T,\cdot)}_{L^2(N)} \quad \text{ for all } k,j \in \N.
\]
From here, we obtain the identity \eqref{eq:new_blago_id}  by taking $k,j \to \infty$ in the equation above. This completes the proof of Lemma \ref{lem:new_blago}.
\end{proof}

\section{Proof of Theorem \ref{thm:basic_uniqueness_thm}}
\label{sec:proof}

In this section we prove Theorem \ref{thm:basic_uniqueness_thm} via utilizing the tools introduced in the previous sections. This section contains three subsections. In the first subsection we show that each point of the manifold has a sequence of neighborhoods that collapse to this point. Due to the local symmetry assumption for the distances \eqref{eq:assumption_rev}, we can carry over the construction of these sets so that with the aid of exact controllability, which follows from the assumption \eqref{eq:GCC}, we can construct a sequence of waves that concentrate to a delta source centered at the chosen point in the third subsection. This allows us to construct the gauge function $\kappa$ promised in Theorem \ref{thm:basic_uniqueness_thm}. To achieve this, we also need to show that the $L^2$-norms of the waves $u_1^h(T,\cdot)$ and $u_2^h(T,\cdot)$ with a source $h \in L^2((0,T)\times \cR)$ are comparable whenever $T>\diam (N,g)$. This is accomplished in the second subsection.

\subsection{Definition and properties of controllable sets}
\label{subsec:controllable_sets}

Throughout this subsection we shall fix an arbitrary point $x_0 \in N.$
Let us recall that the cut locus $Cut(x_0) \subset N$ of  $x_0$ is the closure of all the points of $N$ that can be connected to $x_0$ via more than one distance minimizing geodesic.  
Since $Cut(x_0)$ of   $x_0$ is a closed set of measure zero \cite[Theorem 10.34]{lee_riemannian} and the set  $\cR$ is open, we can choose $y\in \cR\setminus Cut(x_0)$ and write $s=d(x_0,y)$. Then there exists $\varepsilon>0$ and a unique distance minimizing geodesic $\gamma: [-2\varepsilon,s+\varepsilon]\to N$ such that $\gamma(0)=y$, $\gamma(s)=x_0$. Moreover, for all $\delta\in (0,\varepsilon]$ we have
\begin{enumerate}
\item $\gamma(-2\delta)\not\in Cut(x_0)$,
\item $\gamma(s+\delta)\not\in Cut(y)$,
\item $\overline{B(\gamma(-\delta),\delta)}\subset \cR$.
\end{enumerate}

For each $\delta\in (0,\varepsilon]$, we define the set
\begin{equation}
\label{eq:def_Z_delta}
Z_\delta:=\overline{B(\gamma(\varepsilon-\delta),s-\varepsilon+2\delta)\setminus B(\gamma(-\varepsilon-\delta),s+\varepsilon-\delta)}.
\end{equation}
Since $x_0=\gamma(s)$ and $y=\gamma(0)$, it holds that $x_0$ is contained in 
$
B(\gamma(\varepsilon-\delta),s-\varepsilon+2\delta),
$
but not in  
$
B(\gamma(-\varepsilon-\delta),s+\varepsilon-\delta).
$
Hence, $x_0$ is contained in the interior $Z^\circ_\delta$ of the set $Z_\delta$.

The following proposition summarizes some further properties of the sets $Z_\delta$. 
\begin{prop}
\label{prop:Z-set}
Let $x_0,y,s$, and $\varepsilon$ be as above. Then for all $\delta\in (0,\varepsilon]$, the sets $Z_\delta$ satisfy the following properties:
\begin{enumerate}
\item $B(x_0,\delta)\subset Z_\delta$ for all $\delta\in (0,\varepsilon]$,
\item $\diam(Z_\delta)\to 0$ as $\delta\to 0$.
\end{enumerate}
Here $\diam(A)$ stands for the diameter of a set $A \subset N$.
\end{prop}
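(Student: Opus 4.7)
I would tackle the two assertions separately.

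For part (1), the plan is a direct triangle-inequality computation exploiting that $\gamma$ is distance-minimizing on $[-2\varepsilon, s+\varepsilon]$, so $d(\gamma(a), \gamma(b)) = |b-a|$ for all $a, b$ in that interval. Given any $p \in B(x_0, \delta)$, the forward triangle inequality yields
\[
d(p, \gamma(\varepsilon - \delta)) \leq d(p, x_0) + d(x_0, \gamma(\varepsilon-\delta)) < \delta + (s - \varepsilon + \delta) = s - \varepsilon + 2\delta,
\]
placing $p$ in the outer ball, while the reverse triangle inequality gives
\[
d(p, \gamma(-\varepsilon-\delta)) \geq d(x_0, \gamma(-\varepsilon-\delta)) - d(x_0, p) > (s + \varepsilon + \delta) - \delta = s + \varepsilon \geq s + \varepsilon - \delta,
\]
placing $p$ outside the closed inner ball; hence $p \in Z_\delta$.

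For part (2), I plan to argue by compactness and contradiction. Assume $\diam(Z_{\delta_n}) \not\to 0$ along some sequence $\delta_n \to 0^+$; then I can find $p_n, q_n \in Z_{\delta_n}$ and a constant $c > 0$ with $d(p_n, q_n) \geq c$. Using compactness of $N$, I pass to convergent subsequences $p_n \to p_*$ and $q_n \to q_*$, satisfying $d(p_*, q_*) \geq c$. Passing the defining inequalities of $Z_{\delta_n}$ to the limit, and combining with $d(\gamma(-\varepsilon), \gamma(\varepsilon)) = 2\varepsilon$ together with the triangle inequality, I upgrade these one-sided bounds to the \textit{equalities}
\[
d(p_*, \gamma(\varepsilon)) = s - \varepsilon, \qquad d(p_*, \gamma(-\varepsilon)) = s + \varepsilon,
\]
and likewise for $q_*$. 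The triangle equality $d(p_*, \gamma(-\varepsilon)) = d(p_*, \gamma(\varepsilon)) + d(\gamma(\varepsilon), \gamma(-\varepsilon))$ then supplies a distance-minimizing geodesic $\eta$ from $\gamma(-\varepsilon)$ to $p_*$ (obtained by concatenating a shortest path to $\gamma(\varepsilon)$ with one from $\gamma(\varepsilon)$ to $p_*$), which passes through $\gamma(\varepsilon)$ at arc length $2\varepsilon$.

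The remaining and most delicate step, which I expect to be the main obstacle, is to identify $p_*$ with $x_0$. Here I would invoke that, after possibly further shrinking $\varepsilon$ (all of the standing conditions on $\varepsilon$ remain valid under shrinking), both $\gamma(\varepsilon)$ and $\gamma(-\varepsilon)$ lie within a normal neighborhood of $y = \gamma(0)$. This forces $\gamma(\varepsilon) \notin Cut(\gamma(-\varepsilon))$, so that the distance-minimizing geodesic from $\gamma(-\varepsilon)$ to $\gamma(\varepsilon)$ is unique and coincides with $\gamma|_{[-\varepsilon, \varepsilon]}$. Thus $\eta|_{[0, 2\varepsilon]} = \gamma|_{[-\varepsilon, \varepsilon]}$ and in particular $\dot\eta(2\varepsilon) = \dot\gamma(\varepsilon)$. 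Since geodesics are determined by an initial point and initial velocity, $\eta$ continues as $\gamma$ itself, giving $p_* = \eta(s+\varepsilon) = \gamma(s) = x_0$. The identical argument applied to $q_*$ yields $q_* = x_0$, contradicting $d(p_*, q_*) \geq c$. The injectivity-radius step is the main obstacle; everything else is bookkeeping with the triangle inequality.
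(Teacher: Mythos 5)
Your part (1) is correct and is essentially the paper's own computation. Your part (2) takes a genuinely different route: the paper proves the monotonicity $Z_\delta\subset Z_{\delta'}$, splits $Z_\delta\subset Z_\delta^+\cup Z_\delta^-$, cites earlier work for $\bigcap_\delta Z_\delta^+=\{x_0\}$, handles $Z_\delta^-$ by a strong-convexity/corner-cutting estimate, and concludes with the nested compact set property; you instead argue by sequential compactness, pass the defining inequalities of $Z_{\delta_n}$ to the limit, upgrade them to an equality in the triangle inequality, and identify the limit point with $x_0$ by a geodesic-extension argument. The limiting step and the equality upgrade are correct, and your route is self-contained (no auxiliary sets, no citation), so in outline it is a legitimate alternative; at heart both proofs ultimately rest on the same smoothness-of-minimizers (no-corner) principle.

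However, the step you yourself flag as the main obstacle is not justified as written, for two reasons. First, the inference ``$\gamma(\pm\varepsilon)$ lie in a normal neighborhood of $y$, hence $\gamma(\varepsilon)\notin Cut(\gamma(-\varepsilon))$'' is not valid: a normal neighborhood of $y$ controls geodesics emanating from $y$, not minimizers between two other points of that neighborhood (on a thin flat torus two points of a normal neighborhood of $y$ can be at mutual distance exactly half the short period, hence each in the other's cut locus). You would need a totally normal neighborhood, or $2\varepsilon$ below the injectivity radius of the closed manifold $N$. Second, and more seriously, ``further shrinking $\varepsilon$'' is not free here: the sets $Z_\delta$ are defined through $\varepsilon$ (both the centers $\gamma(\varepsilon-\delta)$, $\gamma(-\varepsilon-\delta)$ and the radii), so after shrinking you prove the statement for different sets than those fixed before the proposition and used later in the paper. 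Fortunately the repair needs no shrinking: $\gamma|_{[-\varepsilon,\varepsilon]}$ is an interior subsegment of the minimizing segment $\gamma|_{[-2\varepsilon,s+\varepsilon]}$, and such a subsegment is the unique minimizer between its endpoints (a distinct competitor concatenated with $\gamma|_{[-2\varepsilon,-\varepsilon]}$ would be a minimizer with a corner at $\gamma(-\varepsilon)$). Even more directly, you may skip uniqueness altogether: choose the first leg of your concatenation to be $\gamma|_{[-\varepsilon,\varepsilon]}$ itself; the concatenated curve is minimizing, hence a geodesic, hence smooth at $\gamma(\varepsilon)$, so its second leg has initial velocity $\dot\gamma(\varepsilon)$ and must equal $\gamma(\varepsilon+\cdot)$, giving $p_*=\gamma(s)=x_0$. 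With either repair your argument is complete.
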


\begin{proof}
We shall begin by verifying the property (1). Let $\delta \in (0,\varepsilon]$ and $p\in B(x_0,\delta)$. Then we have  
\[
d(p,\gamma(\varepsilon-\delta))\leq d(\gamma(\varepsilon-\delta),x_0)+d(p,x_0)< s-\varepsilon+2\delta.
\]
Thus, it follows that $p \in B(\gamma(\varepsilon-\delta),s-\varepsilon+2\delta)$. On the other hand, we also deduce that
\[
d(p,\gamma(-\varepsilon-\delta))\geq d(\gamma(-\varepsilon-\delta),x_0)-d(x_0,p)> s+\varepsilon.
\]
Hence, $p \notin B(\gamma(-\varepsilon-\delta),s+\varepsilon-\delta)$. Since $p\in B(x_0,\delta)$ was arbitrarily chosen, the property (1) holds. 

We next show that 
\begin{equation}
\label{eq:Z_increase}
Z_\delta\subset Z_{\delta^\prime} \text{ if } \delta\leq \delta^\prime \leq \varepsilon.
\end{equation}
In order to prove \eqref{eq:Z_increase}, it suffices to show that 
\begin{equation}
\label{eq:ball_containment}
B(\gamma(-\varepsilon-\delta^\prime),s+\varepsilon-\delta^\prime)\subset B(\gamma(-\varepsilon-\delta),s+\varepsilon-\delta),
\end{equation}
and 
\begin{equation}
\label{eq:ball_containment_2}
B(\gamma(\varepsilon-\delta),s-\varepsilon+2\delta)\subset B(\gamma(\varepsilon-\delta^\prime),s-\varepsilon+2\delta^\prime).
\end{equation}
Indeed, let $p\in B(\gamma(-\varepsilon-\delta^\prime),s+\varepsilon-\delta^\prime)$. Then we have
\begin{align*}
d(p,\gamma(-\varepsilon-\delta))
&\leq
d(p,\gamma(-\varepsilon-\delta'))+d(\gamma(-\varepsilon-\delta'),\gamma(-\varepsilon-\delta))
\\
&\leq 
d(p,\gamma(-\varepsilon-\delta^\prime))+(\delta^\prime-\delta)
<s+\varepsilon-\delta^\prime+\delta^\prime-\delta=s+\varepsilon-\delta.
\end{align*}
Similarly, for any $p\in  B(\gamma(\varepsilon-\delta),s-\varepsilon+2\delta)$, we compute directly that
\begin{align*}
d(p,\gamma(\varepsilon-\delta^\prime))
&\leq d(p,\gamma(\varepsilon-\delta))+d(\gamma(\varepsilon-\delta),\gamma(\varepsilon-\delta^\prime))
\\
&\leq 
d(p,\gamma(\varepsilon-\delta))+(\delta^\prime-\delta)<s-\varepsilon+\delta+\delta^\prime<s-\varepsilon+2\delta^\prime.
\end{align*}
Therefore,  \eqref{eq:ball_containment} and \eqref{eq:ball_containment_2} follow immediately from the previous two inequalities. Furthermore, the condition \eqref{eq:Z_increase} is also valid.

We now define two auxiliary sets 
\[
Z_\delta^+:= \overline{B(\gamma(\varepsilon-\delta),s-\varepsilon+2\delta)\setminus B(y,s)},
\]
and 
\begin{equation}
\label{eq:Z_delta}
Z_\delta^-:=\overline{B(y,s)\setminus B(\gamma(-\varepsilon-\delta),s+\varepsilon-\delta)}.
\end{equation}
Due to the properties of closure, we have  
\begin{equation}
\label{eq:Z_delta_as_union}
Z_\delta\subset Z_\delta^+\cup Z_\delta^-.
\end{equation}

To complete the proof, we assume for a moment that
\begin{enumerate}
\item[(i)] $Z_\delta^-\subset Z_{\delta^\prime}^-$ if $\delta\leq \delta^\prime\leq \varepsilon$.
\item[(ii)] $\bigcap_{0<\delta\leq \varepsilon} Z_\delta^-=\{x_0\}$.
\end{enumerate}
Thus, by combining (i) and (ii) with the analogous properties for the sets $Z_\delta^+$, as proven in \cite[Proposition 4.7 and Lemma 4.8]{saksala2025inverse}, we obtain from \eqref{eq:Z_delta_as_union}  that
\[
\bigcap_{0<\delta\leq \varepsilon} Z_\delta\subset \bigcap_{0<\delta\leq \varepsilon}(Z_\delta^+\cup Z_\delta^-)=\{x_0\}.
\]
Notice that Property (1)   implies that $x_0\in Z_\delta$ for all $0<\delta\leq\varepsilon$, hence we get  
\begin{equation}
\label{eq:Z_limit}
\bigcap_{0<\delta\leq \varepsilon} Z_\delta=\{x_0\}.
\end{equation}

Armed with \eqref{eq:Z_limit}, we proceed to prove Property (2).
Clearly, it follows from \eqref{eq:Z_increase} that the function $\delta \mapsto \diam(Z_\delta)$ is increasing. Thus, there exists $\eta\geq 0$ such that $\lim_{\delta\to 0}\diam(Z_\delta)=\eta$. For the sake of contradiction, we assume that $\eta>0$ and write $\widetilde{Z_\delta}=Z_\delta\setminus B(x_0,\frac{\eta}{3})$. 
Since $\diam B(x_0,\frac{\eta}{3})\leq \frac{2\eta}{3}<\eta$, we see that $\widetilde{Z_\delta}\neq \emptyset$ for all $0<\delta\leq \varepsilon$. However, there exists $K \in \N$ such that $\frac{1}{K}<\varepsilon$, and according to the nested compact set property, see for instance \cite[Theorem 26.9]{Munkres}, we get $\bigcap_{k\geq K} \widetilde{Z_{1/k}}\neq \emptyset$. However, as we also have $\widetilde{Z_\delta} \subset Z_\delta$,   the property \eqref{eq:Z_limit} is violated. 
Therefore, we must have  $\eta=0$. This proves Property (2).

In order to finish the proof of the proposition, we next verify Properties (i) and (ii) above.
Since the ball $B(y,s)$ is fixed, Property (i) follows immediately from \eqref{eq:ball_containment}.

To prove (ii), we first show that 
\begin{equation}
\label{eq:1}
B(y,s)\subset \bigcup_{0<\delta\leq \varepsilon} B(\gamma(-\varepsilon-\delta),s+\varepsilon-\delta).
\end{equation}
If  $p \in B(y,s)$, we can find $\xi>0$ such that $d(p,y)<s-\xi$, and we get from $y=\gamma(0)$ that
\[
d(p,\gamma(-\varepsilon-\delta))\leq d(p,y)+\varepsilon+\delta<s-\xi+\varepsilon+\delta.
\]
Whenever $\delta<\frac{\xi}{2}$, the previous estimate yields that $d(p,\gamma(-\varepsilon-\delta))<s+\varepsilon-\delta$. Therefore, we conclude that  $p\in B(\gamma(-\varepsilon-\delta),s+\varepsilon-\delta)$. Since $p$ is arbitrary, we obtain \eqref{eq:1}.

On the other hand, we get from \eqref{eq:Z_delta} that 
\[
Z_\delta^-\subset \overline{B(y,s)}=B(y,s)\cup \p B(y,s), \quad \text{for every } \delta\in (0,\epsilon].
\]
Meanwhile, \eqref{eq:1} implies that
\begin{equation}
\label{eq:inclusion_of_Z_delta_intersection}
\bigcap_{0<\delta\leq \varepsilon}Z_\delta^- \subset \p B(y,s).
\end{equation}

To finish the proof of property (ii), it remains to verify that
\[
\bigcap_{0<\delta\leq \varepsilon}Z_\delta^- \cap \p B(y,s)=\{x_0\}.
\]
To this end, we recall that all small metric balls are strongly convex, see for instance \cite[Chapter 3, Proposition 4.2]{doCarmo}, in the sense that for any two points contained in the closure of the ball, there is a unique distance minimizing geodesic contained in the ball, possibly excluding the terminal points. Let $\zeta \in (0,\varepsilon]$ be sufficiently small so that $B(y,\zeta)$ is  strongly convex. Let $z\in \p B(y,s)$, $z\neq x_0$, and let us recall that on complete Riemannian manifolds we have 
\[
\p B(y,s)=\{x \in N: \: d(x,y)=s\}.
\]
By the Hopf-Rinow theorem, there exists a geodesic $\gamma_z$ such that $\gamma_z(0)=y$ and $\gamma_z(s)=z$. Due to strong convexity, the points $\gamma_z(\zeta)$ and $\gamma(\zeta)$ cannot coincide. Furthermore, we get from the strong convexity of $B(y,\zeta)$ that $d(\gamma_z(\zeta),\gamma(-\zeta))<2\zeta$. If this was not the case, we would need to have  $d(\gamma_z(\zeta),\gamma(-\zeta))=2\zeta$, and the curve $\gamma_z|_{[0,\theta]} \circ \gamma|_{[-\zeta,0]}$ would be distance minimizing since it has length $2\zeta$. However, this is not possible as $\gamma_z(\zeta)\neq \gamma(\zeta)$ implies that the curve $\gamma_z|_{[0,\theta]} \circ \gamma|_{[-\zeta,0]}$ is not smooth at $0$. Hence, this curve cannot minimize distances. 

For small enough $\sigma>0$, we can write $d(\gamma_z(\zeta),\gamma(-\zeta))<2\zeta-\sigma$. Hence, we have
\begin{align*}
d(z,\gamma(-\varepsilon-\delta))&\leq d(z,\gamma_z(\zeta))+d(\gamma_z(\zeta),\gamma(-\zeta))+d(\gamma(-\zeta),\gamma(-\varepsilon-\delta))\\
&<s-\zeta+2\zeta-\sigma+\varepsilon+\delta-\zeta=s+\varepsilon+\delta-\sigma.
\end{align*}
Thus, whenever $\delta<\sigma/2$, we conclude that $z\in B(\gamma(-\varepsilon-\delta),s+\varepsilon-\delta)$ and $z \notin Z_\delta^-$. Therefore, \eqref{eq:inclusion_of_Z_delta_intersection} implies that $\bigcap_{0<\delta\leq \varepsilon}Z_\delta^- \subset \{x_0\}$, and the equality follows from Property (i) and the compact nested set property. This completes the proof of Proposition \ref{prop:Z-set}.
\end{proof}

We say that an open set $U\subset N$ satisfies the condition (C) if the following holds:
\begin{equation}
\label{con:convex}
\tag{C}
\begin{split}
&\text{For all distinct }y,z\in U, \text{ there are }p,q\in \cR, \\
&\text{such that }d(p,z)<d(p,y)\text{ and }d(q,y)<d(q,z).
\end{split}
\end{equation}
By the assumption \eqref{eq:assumption_rev}, every point of $N$ has a neighborhood satisfying the condition \eqref{con:convex}.

Let $\mathcal{V} \subset N$ be open. For a continuous function $h: \overline{\mathcal{V}}\to \R$, we define a modified domain of influence as
\begin{equation}
\label{eq:adjusted_domain_of_influence}
M(\mathcal{V},h):=\{x\in  N\mid \inf_{y\in \V}(d(x,y)-h(y))\leq 0\}.
\end{equation}
In what follows we shall always choose the function $h$ to be one of the following modified distance functions: 
\begin{equation}
\label{eq:adjusted_dist_function}
h_{k,x_0}(z)=d(x_0,z)-\frac{1}{k},\ z\in \mathcal{V}, \: k \in \N.
\end{equation}

\begin{prop}
\label{prop:sets_X_k_x_0}
Let $x_0\in N$, and let $U$ be an open neighborhood of $x_0$ satisfying the condition \eqref{con:convex}. For each $k \in \N$ and $h_{k,x_0}$ defined in \eqref{eq:adjusted_dist_function} with domain $\mathcal{V}=\cR$, the set 
\begin{equation}
\label{eq:set_X_{k,x}}
X_{U,k,x_0}:=U \setminus M(\cR, h_{k,x_0})
\end{equation} 
is an open neighborhood of $x_0$ and $\diam(X_{U,k,x_0})\to 0$ as $k\to \infty$. 
\end{prop}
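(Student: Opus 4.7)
My plan is to reformulate $X_{U,k,x_0}$ using the continuous auxiliary function
$$\phi(x) := \inf_{y \in \cR}\bigl(d(x,y) - d(x_0, y)\bigr), \quad x \in N,$$
which is $1$-Lipschitz on $N$ as an infimum of the uniformly $1$-Lipschitz family $x \mapsto d(x,y) - d(x_0,y)$. Unwinding the definitions \eqref{eq:adjusted_domain_of_influence} and \eqref{eq:adjusted_dist_function}, one obtains the clean identity
$$X_{U,k,x_0} = U \cap \bigl\{x \in N : \phi(x) > -1/k\bigr\}.$$
This exhibits $X_{U,k,x_0}$ as the intersection of two open sets, hence as open, and the equality $\phi(x_0) = 0 > -1/k$ gives $x_0 \in X_{U,k,x_0}$, which takes care of the first half of the proposition.

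For the diameter claim, I would first extract from condition \eqref{con:convex} the pointwise strict inequality $\phi(z) < 0$ for every $z \in U \setminus \{x_0\}$: \eqref{con:convex} applied to the pair $(x_0, z)$ produces some $p \in \cR$ with $d(p, z) < d(p, x_0)$, whence $\phi(z) \leq d(p, z) - d(p, x_0) < 0$. The main step is then a compactness argument: after shrinking $U$ to a smaller open neighborhood $U'$ of $x_0$ with $\overline{U'} \subset U$, so that \eqref{con:convex} applies to every pair $(x_0, z)$ with $z \in \overline{U'} \setminus \{x_0\}$, the continuous function $\phi$ is strictly negative on the compact set $\overline{U'} \setminus B(x_0, \epsilon)$ for each $\epsilon > 0$, and hence admits an upper bound $-c(\epsilon) < 0$ there. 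As soon as $1/k < c(\epsilon)$, we obtain $X_{U',k,x_0} \subset B(x_0, \epsilon)$, and therefore $\diam(X_{U',k,x_0}) \to 0$.

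The main obstacle I anticipate is upgrading this conclusion from $X_{U', k, x_0}$ to the original $X_{U, k, x_0}$, i.e., verifying that $X_{U, k, x_0} \subset U'$ for all sufficiently large $k$. A limit point $z^*$ of a hypothetical escape sequence $x_k \in X_{U,k,x_0} \setminus U'$ could lie on $\partial U$, where \eqref{con:convex} is not directly available. This is where the full strength of \eqref{eq:assumption_rev} enters: it furnishes a neighborhood of $z^*$ on which \eqref{con:convex} holds, and a finite chaining of such neighborhoods — using compactness of $\overline{U}$ — yields a uniform negative upper bound for $\phi$ on $\overline{U}\setminus U'$, delivering the required inclusion.
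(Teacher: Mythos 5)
Your reformulation via the $1$-Lipschitz function $\phi(x)=\inf_{y\in\cR}\bigl(d(x,y)-d(x_0,y)\bigr)$ and the identity $X_{U,k,x_0}=U\cap\{\phi>-1/k\}$ correctly settles openness and $x_0\in X_{U,k,x_0}$, and your observation that \eqref{con:convex} applied to the pair $(x_0,z)$ forces $\phi(z)<0$ for $z\in U\setminus\{x_0\}$ is exactly the mechanism the paper uses: the paper argues pointwise, choosing for each $y\in U\setminus\{x_0\}$ a witness $z_y\in\cR$ and a radius $\rho<\tfrac12\bigl(d(x_0,z_y)-d(y,z_y)\bigr)$ so that the whole ball $B(y,\rho)$ lies in $M(\cR,h_{k,x_0})$ once $1/k<\rho$, hence $y\notin\overline{X_{U,k,x_0}}$, and then concludes $\bigcap_k\overline{X_{U,k,x_0}}=\{x_0\}$ and invokes the nested compact set property to get the diameter decay. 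Your compactness argument on a shrunken $U'$ with $\overline{U'}\subset U$ is a correct repackaging of that step and proves $\diam(X_{U',k,x_0})\to 0$.

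The genuine gap is in your final step, precisely where you depart from the paper. The proposed chaining of \eqref{eq:assumption_rev}-neighborhoods cannot deliver a uniform negative bound for $\phi$ on $\overline{U}\setminus U'$: the assumption \eqref{eq:assumption_rev} only compares pairs of points lying in one small neighborhood, and the witness in $\cR$ depends on the pair, so from $d(p_1,z_1)<d(x_0,z_1)$ and $d(p_2,z_2)<d(p_1,z_2)$ with $z_1\neq z_2$ you cannot manufacture a single $z\in\cR$ with $d(p_2,z)<d(x_0,z)$; the relation ``some point of $\cR$ is strictly closer to $p$ than to $q$'' is not transitive, and $\phi$ compares points of a neighborhood of a boundary point $z^*$ with the distant point $x_0$, which \eqref{eq:assumption_rev} never does. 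Consequently the inclusion $X_{U,k,x_0}\subset U'$ for large $k$ is not established, and the statement for the original $U$ remains unproved in your write-up. It is worth noting that the subtlety you isolate --- a possible $z^*\in\partial U\setminus U$ with $d(z^*,z)\ge d(x_0,z)$ for all $z\in\cR$ --- is real in the sense that the paper's own proof does not discuss it either: the paper quantifies only over $y\in U\setminus\{x_0\}$, excludes such $y$ from the closures $\overline{X_{U,k,x_0}}$, and passes directly to $\bigcap_k\overline{X_{U,k,x_0}}=\{x_0\}$, making no use of \eqref{eq:assumption_rev} at this point. So either follow the paper's route (pointwise exclusion on $U\setminus\{x_0\}$ plus the nested compact set property), or, if you want to treat the boundary honestly, you need an actual argument ruling out such boundary points; the chaining sketch is not one.
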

\begin{proof}
Since the set $U$ and point $x_0$ remain fixed throughout the proof, we will use the shorthand $X_k := X_{U,k,x_0}$ to simplify the notation.
Since $U$ is an open neighborhood of $x_0$, there exists an open ball $B(x_0,\delta)\subset U$ whenever $\delta>0$ is small enough. Let $k \in \N$, and let us write  $\eta:=\min\{\delta,1/k\}$. Due to the triangle inequality, for any $p\in B(x_0,\eta)$ and $z\in \cR$, we have  
\[
d(p,z)\geq d(x_0,z)-d(x_0,p)>d(x_0,z)-\frac{1}{k}.
\]
Hence, we obtain $p\not\in  M(\cR, h_{k,x_0})$ and $B(x_0,\eta)\subset X_k$. This proves the first claim in the proposition.

Turning our attention to the second claim, it follows from \eqref{eq:adjusted_domain_of_influence} and $\eqref{eq:adjusted_dist_function}$ that $x_0 \notin M(\cR,h_{k,x_0})$ for any $k \in \N$. Since $U$ is a neighborhood of $x_0$, due to \eqref{eq:set_X_{k,x}}, we see that  $x_0 \in \overline{X_k}$ for all $k \in \N$. 

We now aim to show that $\bigcap_{k=1}^\infty \overline{X_k}= \{x_0\}$. If $y \in U \setminus \{x_0\}$, thanks to the condition \eqref{con:convex}, we can choose $z_y \in \cR$ such that $d(y,z_y)<d(x_0,z_y)$. Thus, we can also choose
$0<\rho< \frac{1}{2}(d(x_0,z_y)-d(y,z_y))$ and take any $p\in B(y,\rho)$. Then it follows that
\[
d(p,z_y) < d(y,z_y)+\rho<\frac{1}{2}(d(x_0,z_y)+d(y,z_y)).
\]
If we choose $k \in \N$ such that $\frac{1}{k}<\rho$, we get from the estimates above that $d(p,z_y)<d(x_0,z_y)-\frac{1}{k}$. As $p \in B(y,\rho)$ was arbitrarily chosen, we see that $B(y,\rho)\subset M(\cR,h_{k,x_0})$, which implies that $y \notin \overline{X_k}$ for sufficiently large $k$. Hence, we have $\bigcap_{k=1}^\infty \overline{X_k}= \{x_0\}$.

Since  $\bigcap_{k=1}^\infty \overline{X_k}= \{x_0\}$ and the sets  $\overline{X_k}$ are closed, we get from  \cite[Theorem 26.9]{Munkres} that $\diam(\overline{X_k})\to 0$ as $k\to \infty$. Furthermore, as $\diam(X_k)\leq \diam(\overline{X_k})$, we conclude that $\diam(X_k)\to 0$ as $k\to \infty$ as well. This completes the proof of Proposition \ref{prop:sets_X_k_x_0}.
\end{proof}

\subsection{Correspondence between the solutions of two systems}
\label{subsec:correspondence}

In this subsection we provide some key lemmas about the correspondence between the solutions of the $\Box u^f=f$, when $A=A_i$ and $q=q_i$ for $i\in \{1,2\}$ and $f \in C^\infty_0((0,T)\times \cR)$ is a common interior source. 

\begin{lem}
\label{lm:exact_control} 
Let $T>\diam (N,g)$. If the assumption \eqref{eq:GCC} and the hypothesis \eqref{eq:hypothesis} are valid,  there exists a uniform constant $C>0$ such that
\begin{equation}
\label{eq:lm_exact}
\norm{\p_t u_2^f(T,\cdot)}_{L^2(N)}
\leq
C\norm{\p_t u_1^f(T,\cdot)}_{L^2(N)}
\end{equation}
for all $f \in L^2((0,T)\times\cR)$. Moreover, the similar estimate holds when the roles of $u_1^f$ and $u_2^f$ are interchanged.
\end{lem}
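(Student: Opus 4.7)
The plan is to encode the inequality as the boundedness of a well-defined operator $\Phi \colon L^2(N) \to L^2(N)$ that sends $\p_t u_1^f(T,\cdot)$ to $\p_t u_2^f(T,\cdot)$, and then obtain the bound from the open mapping theorem. Concretely, for $i\in\{1,2\}$ introduce the bounded linear map
\[
K_i \colon L^2((0,T)\times \cR)\to L^2(N), \qquad K_i(f):=\p_t u_i^f(T,\cdot),
\]
which is continuous by wellposedness of \eqref{eq:Cauchy_Problem}, and which is surjective by Proposition \ref{prop:L2_control} (this is where \eqref{eq:GCC} together with $T>\diam(N,g)$ enters).

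Next I would prove the kernel inclusion $\ker K_1\subset \ker K_2$. Given $f\in L^2((0,T)\times \cR)$ with $K_1 f=0$, extend $f$ by zero to $(0,2T)\times \cR$ and apply the second identity of Lemma \ref{lem:new_blago} to obtain, for every $g\in C_0^\infty((0,T)\times \cS)\subset L^2((0,2T)\times\cS)$,
\[
\inner{u_2^g(T,\cdot),\p_t u_2^f(T,\cdot)}=\inner{u_1^g(T,\cdot),\p_t u_1^f(T,\cdot)}=0.
\]
Since $T>\diam(N,g)$, the domain of influence $M(\cS,T)$ equals $N$, so $H^0_0(M(\cS,T))=L^2(N)$; then Proposition \ref{thm:approximate_controllability} with $k=0$ yields that $\{u_2^g(T,\cdot):g\in C_0^\infty((0,T)\times\cS)\}$ is $L^2(N)$-dense. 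Orthogonality to a dense set forces $\p_t u_2^f(T,\cdot)=0$, i.e.\ $K_2 f=0$, as claimed.

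With this inclusion and the surjectivity of $K_1$, the formula $\Phi(K_1 f):=K_2 f$ unambiguously defines a linear map $\Phi\colon L^2(N)\to L^2(N)$. To obtain boundedness, note that the open mapping theorem applied to the surjection $K_1$ yields a constant $C'>0$ such that for each $x\in L^2(N)$ one can choose $f\in L^2((0,T)\times \cR)$ with $K_1 f=x$ and $\|f\|_{L^2}\le C'\|x\|_{L^2}$; then
\[
\|\Phi(x)\|_{L^2(N)}=\|K_2 f\|_{L^2(N)}\le \|K_2\|\,\|f\|_{L^2}\le C'\|K_2\|\,\|x\|_{L^2(N)}.
\]
Setting $x=\p_t u_1^f(T,\cdot)$ gives \eqref{eq:lm_exact} with $C=C'\|K_2\|$. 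The reverse inequality follows by interchanging the roles of the indices $1$ and $2$, since Proposition \ref{prop:L2_control} applies equally to both systems and Lemma \ref{lem:new_blago} is symmetric in its hypothesis.

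The main obstacle is the density step: one must recognize that, thanks to the assumption $T>\diam(N,g)$, Proposition \ref{thm:approximate_controllability} at regularity $k=0$ in fact gives density of $\{u_2^g(T,\cdot)\}$ in all of $L^2(N)$ rather than merely in $L^2$ of a proper subdomain, which is what ultimately converts the Blagoveščenskii orthogonality into the pointwise vanishing $K_2 f=0$. Everything else is routine functional analysis once the kernel inclusion is established.
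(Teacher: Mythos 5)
Your proof is correct and takes essentially the same route as the paper: the kernel comparison via the second Blagoveščenskii identity of Lemma \ref{lem:new_blago} together with Proposition \ref{thm:approximate_controllability} (using $M(\cS,T)=N$ when $T>\diam(N,g)$), surjectivity of the control map from Proposition \ref{prop:L2_control}, and a Banach open-mapping argument. The only cosmetic difference is that the paper runs the last step through the quotient space $L^2((0,T)\times\cR)/F$ and the bounded inverse theorem, whereas you factor through the surjection $K_1$ and invoke the open mapping theorem directly; the two are interchangeable.
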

\begin{proof}
For $k \in \{1,2\}$ we define the sets
\[
F_k:= \{f\in L^2((0,\infty)\times\cR)\mid \p_t u_k^f(T,\cdot)=0\},
\]
and aim to show that $F_1=F_2$. To this end. suppose that $f\in F_1$. By  Lemma \ref{lem:new_blago}, we have  
\[
\inner{u_2^h(T,\cdot),\p_t u_2^f(T,\cdot)}_{L^2(N)}=\inner{u_1^h(T,\cdot),\p_t u_1^f(T,\cdot)}_{L^2(N)}=0,\: \text{ for any } h\in C^\infty_0((0,\infty)\times\cS).
\]
Since $T> \diam(N)$, it follows that the domain of influence $M(\mathcal{S},T)$ equals to $N$. Hence, according to Proposition \ref{thm:approximate_controllability},  we  conclude that $\p_t u_2^f(T,\cdot)=0$. Consequently, we get $f\in F_2$, which yields $F_1\subset F_2$. By an analogous argument, we also have $F_2 \subset F_1$. Hence, $F_1=F_2$, and in what follows we shall simplify the notations by denoting $F=F_1$. 
Furthermore, we get from the wellposedness of   hyperbolic equations, see for instance \cite[Theorem 2.30]{KKL}, that
\begin{equation}
\label{eq:regularity}
\norm{\p_t u_1^f(T,\cdot)}_{L^2(N)} \leq C \norm{f}_{L^2((0,T)\times \cR)}.
\end{equation}
Thus, the set $F$ is $L^2$-closed as it is the kernel of a continuous operator $L^2((0,T)\times \cR) \ni f \mapsto \p_t u^f_1(T,\cdot) \in L^2(N)$. 

We now denote the quotient space $L^2((0,\infty)\times \cR)\setminus F$ by $\mathbb{F}$, and equip it with the norm induced by the quotient map
\[
\norm{f}_{\mathbb{F}}:=\norm{f-P_F(f)}_{L^2((0,\infty)\times \cR)},
\]
where $P_{F}$ is the projection operator onto $F$. According to \cite[Proposition 11.8]{Brezis}, $\mathbb{F}$ equipped with the quotient norm $\norm{\cdot}_{\mathbb{F}}$ is a Banach space. 

After these preparations, we proceed to define an injective operator
\[
L: \mathbb{F} \to L^2(N),\quad L(f)=\p_t u_1^f(T,\cdot),
\]
which is continuous due to the inequality \eqref{eq:regularity}. Furthermore, Proposition \ref{prop:L2_control}  implies that the map $L$ is also surjective. 
Therefore, it follows from the bounded inverse theorem that the inverse operator $L^{-1}\colon L^2(N) \to \mathbb{F}$ is also continuous. 
Consequently, for each $f\in L^2((0,\infty)\times\cR)$, we have
\begin{equation}
\label{eq:exact_eq_1}
\norm{f-P_{F}(f)}_{L^2((0,\infty)\times \cR)}=\norm{f}_{\mathbb{F}}
\leq
C \norm{\p_t u_1^f(T,\cdot)}_{L^2(N)}.
\end{equation}

Finally, we get from the equation $\p_t u_2^{P_{F}(f)}(T,\cdot)=0$, the wellposedness of the problem \eqref{eq:Cauchy_Problem}, and the linearity of the respective source-to-solution operator that 
\begin{equation}
\label{eq:exact_eq_2}
\norm{\p_t u_2^{f}(T,\cdot)}_{L^2(N)}=\norm{\p_t u_2^{f-P_{F}(f)}(T,\cdot)}_{L^2(N)}\leq C \norm{f-P_{F}(f)}_{L^2((0,\infty)\times \cR)}.
\end{equation}
Hence, the estimate \eqref{eq:lm_exact} follows immediately by combining the estimates \eqref{eq:exact_eq_1} and \eqref{eq:exact_eq_2}. This completes the proof of Lemma \ref{lm:exact_control}.
\end{proof}

\begin{cor}
\label{cor:smooth_control}
Let $T>\diam (N,g)$. If the assumption \eqref{eq:GCC} and the hypothesis \eqref{eq:hypothesis} are valid,  there exists a uniform constant $C>0$ such that
\begin{equation}
\label{eq:exact_no_derivative}
\norm{u_2^f(T,\cdot)}_{L^2(N)}
\leq
C\norm{u_1^f(T,\cdot)}_{L^2(N)}
\end{equation}
for all $f \in C_0^\infty((0,T)\times\cR)$. Moreover, the similar estimate holds when the roles of $u_1^f$ and $u_2^f$ are interchanged.
\end{cor}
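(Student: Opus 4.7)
My plan is to reduce Corollary \ref{cor:smooth_control} to Lemma \ref{lm:exact_control} by an integration-in-time trick that exploits the time-independence of the coefficients $A_i$ and $q_i$. Given $f \in C_0^\infty((0,T) \times \cR)$, I would introduce its primitive in time,
\[
F(t,x) := \int_0^t f(s,x)\, ds,
\]
which is smooth, spatially supported in $\cR$, and satisfies $F(0,\cdot) = 0$; in particular it lies in $L^2((0,T) \times \cR)$, even though it need not vanish at $t=T$ and hence is not an element of $C_0^\infty((0,T)\times \cR)$.

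The crux is the identity $\p_t u_i^F = u_i^f$ for $i \in \{1,2\}$, which comes from a standard uniqueness argument. Setting $w_i := \p_t u_i^F$, the time-independence of $A_i$ and $q_i$ allows one to differentiate $\square_{A_i,q_i} u_i^F = F$ in $t$ to obtain $\square_{A_i,q_i} w_i = \p_t F = f$. The initial conditions $u_i^F(0,\cdot) = \p_t u_i^F(0,\cdot) = 0$ give $w_i(0,\cdot) = 0$, and evaluating the PDE at $t=0$ together with $F(0,\cdot) = 0$ yields $\p_t w_i(0,\cdot) = \p_t^2 u_i^F(0,\cdot) = 0$. Thus $w_i$ solves the same Cauchy problem \eqref{eq:Cauchy_Problem} as $u_i^f$, and uniqueness in the energy class gives the claim.

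Applying Lemma \ref{lm:exact_control} to the $L^2$-source $F$ and using the identity above at $t=T$, the chain
\[
\|u_2^f(T,\cdot)\|_{L^2(N)} = \|\p_t u_2^F(T,\cdot)\|_{L^2(N)} \leq C \|\p_t u_1^F(T,\cdot)\|_{L^2(N)} = C \|u_1^f(T,\cdot)\|_{L^2(N)}
\]
yields exactly \eqref{eq:exact_no_derivative}, with the same constant $C$ as in Lemma \ref{lm:exact_control}. The reversed estimate is obtained in the same way from the symmetric half of Lemma \ref{lm:exact_control}. The only place requiring a little care is the identification $\p_t u_i^F = u_i^f$, since $F$ is only of $L^2$-regularity and $u_i^F$ a priori lies only in the energy class $C([0,T];H^1)\cap C^1([0,T];L^2)$; this is the one spot where one must resist treating $u_i^F$ as classically differentiable, but the identification is routine given well-posedness and uniqueness in the energy class.
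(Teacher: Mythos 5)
Your argument is correct and coincides with the paper's own proof: the paper likewise takes a time primitive $\widetilde f$ with $\p_t\widetilde f=f$ (lying in $C^\infty\cap L^2((0,T)\times\cR)$ but not compactly supported in time), invokes the identity $\p_t u_i^{\widetilde f}=u_i^{f}$ already justified in the proof of Lemma \ref{lem:new_blago} via time-independence of the coefficients, and then applies Lemma \ref{lm:exact_control} to $\widetilde f$. Your extra caution about the regularity of $u_i^F$ is harmless but unnecessary, since $f$ vanishes near $t=0$, so $F$ is smooth with trivially satisfied compatibility conditions and $u_i^F$ is in fact smooth.
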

\begin{proof}
Since $f\in C_0^\infty((0,T)\times \cR)$, we get from the fundamental theorem of calculus that there exists a function  $\widetilde{f}\in C^\infty((0,T)\times\cR) \cap L^2((0,T)\times \cR))$ such that $\p_t\widetilde{f}=f$. Moreover, as discussed in the proof of Lemma \ref{lem:new_blago}, we have 
\[
\p_t u_i^{\widetilde{f}}=u_i^{\p_t \widetilde{f}}=u_i^f,\ i=1,2.
\]
Hence, an application of Lemma \ref{lm:exact_control} implies the estimate \eqref{eq:lm_exact} for the source $\tilde f$, from which \eqref{eq:exact_no_derivative} follows immediately. This completes the proof of Corollary \ref{cor:smooth_control}.
\end{proof}

\subsection{Proof of Theorem \ref{thm:basic_uniqueness_thm}}
\label{subsec:proof}

For an open set $\mathcal{V}\subset N$ and a  continuous function $h:\overline{\mathcal{V}}\to \R$, we define an ``upside down standing conical set''
\[
\mathcal{B}(\mathcal{V},h;T):=\{(t,y)\in (0,T)\times \mathcal{V}\mid T-h(y)<t\}.
\]

In the next lemma we generalize the classical finite speed of wave propagation and  the approximate controllability results for waves whose interior sources lie in $\mathcal{B}(\mathcal{V},h;T)$.

\begin{lem}
\label{lem:adjusted_finite_speed_of_prop}
Let $T>0$. If $\mathcal{V} \subset N$ is open, $h \colon \overline{\mathcal{V}} \to \R$ is continuous, and 
$f \in C^\infty_0(\mathcal{B}(\mathcal{V},h;T))$, we have
\[
\text{supp}(u^f(T, \cdot))\subset M(\mathcal{V},h),
\]
where the latter set, the modified domain of influence, is defined as in \eqref{eq:adjusted_domain_of_influence}.

Moreover, the set
\[
\{u^f(T,\cdot): \: f \in C^\infty_0(\mathcal{B}(\mathcal{V},h;T))\}
\]
is dense in $L^2(M(\mathcal{V},h))$.
\end{lem}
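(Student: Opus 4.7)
My plan is to prove the two assertions separately, each via a standard technique.

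\textbf{Finite speed of propagation (first claim).} I would invoke the classical finite speed of propagation for the Cauchy problem \eqref{eq:Cauchy_Problem} (\cite[Chapter 6]{taylor_partial}), which yields
\[
\supp(u^f(T,\cdot))\subset \bigcup_{(t_0,y_0)\in\supp(f)}\overline{B(y_0,T-t_0)}.
\]
For each $(t_0,y_0)\in\supp(f)\subset\mathcal{B}(\mathcal{V},h;T)$, the defining inequality of $\mathcal{B}$ forces $T-t_0<h(y_0)$. Hence every $x$ in the above union satisfies $d(x,y_0)-h(y_0)\leq (T-t_0)-h(y_0)<0$, placing $x$ inside $M(\mathcal{V},h)$.

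\textbf{Density via duality (second claim, setup).} For the density statement, I would argue by duality. Suppose $\phi\in L^2(M(\mathcal{V},h))$, extended by zero to $N$, is orthogonal in $L^2(N)$ to the family $\{u^f(T,\cdot):f\in C_0^\infty(\mathcal{B}(\mathcal{V},h;T))\}$. Let $w\in C([0,T],H^1(N))\cap C^1([0,T],L^2(N))$ solve the backward Cauchy problem $\Box w=0$ on $(0,T)\times N$ with terminal data $w(T,\cdot)=0$ and $\p_tw(T,\cdot)=\phi$. Integrating by parts twice in time and using the self-adjointness of $\L$ on $L^2(N)$, I would obtain
\[
\int_0^T\int_N f(t,x)\,\overline{w(t,x)}\,dV_g\,dt=-\inner{u^f(T,\cdot),\phi}_{L^2(N)}=0
\]
for every $f\in C_0^\infty(\mathcal{B}(\mathcal{V},h;T))$. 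Since these test functions are dense in $L^2(\mathcal{B}(\mathcal{V},h;T))$, this forces $w\equiv 0$ a.e.\ on $\mathcal{B}(\mathcal{V},h;T)$.

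\textbf{Tataru propagation and conclusion.} Next, I would apply Tataru's unique continuation theorem \cite{tataru1995unique} iteratively. For each $x_0\in N$ satisfying $d(x_0,y_0)<h(y_0)$ for some $y_0\in\mathcal{V}$, choose $\delta>0$ so small that the initial point $(T-d(x_0,y_0)-\delta,y_0)$ lies in $\mathcal{B}(\mathcal{V},h;T)$. Since $w$ vanishes in a spacetime neighborhood of this point and the path to $(T,x_0)$ has temporal span $d(x_0,y_0)+\delta$ exceeding its spatial length $d(x_0,y_0)$, it is time-like; chaining Tataru double cones along it produces vanishing of $w$ in an open spacetime neighborhood of $(T,x_0)$ inside $[0,T]\times N$. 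Because $A$ and $q$ are time-independent, $v:=\p_tw$ also solves $\Box v=0$ and vanishes on the same neighborhood, so $\phi(x_0)=v(T,x_0)=0$. Varying $x_0$ over the open set $\{x\in N:\inf_{y\in\mathcal{V}}(d(x,y)-h(y))<0\}$ yields $\phi=0$ there; its complement inside $M(\mathcal{V},h)$ lies in the zero set of the 1-Lipschitz function $x\mapsto \inf_{y\in\mathcal{V}}(d(x,y)-h(y))$ and has Lebesgue measure zero, so $\phi=0$ in $L^2(M(\mathcal{V},h))$.

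\textbf{Main obstacle.} The principal technical step is the iterated Tataru argument: for each target $x_0$ one must organize a chain of double cones originating in $\mathcal{B}(\mathcal{V},h;T)$ whose union covers an open neighborhood of $(T,x_0)$ in $[0,T]\times N$ without leaving the time slab. The variable ``budget'' $h$ makes this more delicate than the constant-$h$ case treated in Proposition \ref{thm:approximate_controllability} via \cite[Corollary 2.15, Theorem 2.26]{saksala2025inverse}, but the construction follows the same blueprint.
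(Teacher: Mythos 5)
Your first claim (finite speed of propagation) and your duality setup for the density claim are fine, and your overall strategy is the same one underlying the proof the paper cites from \cite{saksala2025inverse}: test against a backward solution $w$ with $w(T,\cdot)=0$, $\p_t w(T,\cdot)=\phi$, deduce $w=0$ on $\mathcal{B}(\mathcal{V},h;T)$, and then invoke Tataru's unique continuation. The gap is in the unique continuation step itself. The set $\mathcal{B}(\mathcal{V},h;T)$ lies entirely in $\{t<T\}$, and from vanishing of a solution on a region contained in $\{t<T\}$ one can only fill in \emph{double cones} that are symmetric in time about points strictly below the slice $t=T$; the spatial reach of these cones shrinks to zero as $t\to T^-$ (from $w=0$ on $(T-r,T)\times B$ one gets vanishing only on $\{(t,x): d(x,B)<\min(t-(T-r),\,T-t)\}$, which pinches back to $B$ at $t=T$). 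So "chaining Tataru double cones along a time-like path inside $[0,T]\times N$" cannot produce vanishing of $w$ near $(T,x_0)$ when $d(x_0,\mathcal{V})>0$; as a pure unique continuation statement for an arbitrary solution vanishing on $\mathcal{B}$, your claimed conclusion is simply not available. The missing ingredient is precisely the terminal condition $w(T,\cdot)=0$, which you never use after the integration by parts: since the coefficients are time-independent, $\tilde w(t,x):=-w(2T-t,x)$ solves the same equation with the same Cauchy data at $t=T$, so by uniqueness $w$ is odd about $t=T$ and therefore also vanishes on the mirror image of $\mathcal{B}$, i.e.\ on $\{(t,y): y\in\mathcal{V},\ |t-T|<h(y)\}$. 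Only after this reflection does the double-cone argument legitimately give $w=0$ on $\{(t,x): \exists y\in\mathcal{V},\ d(x,y)+|t-T|<h(y)\}$, hence $\phi=\p_t w(T,\cdot)=0$ on $\{x: \inf_{y\in\mathcal{V}}(d(x,y)-h(y))<0\}$. This reflection (or an equivalent evenness/oddness argument) is the key trick in all proofs of this type, and its absence is a genuine gap at the central step.

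A secondary point: your final assertion that the zero level set of the $1$-Lipschitz function $F(x)=\inf_{y\in\mathcal{V}}(d(x,y)-h(y))$ has measure zero is false for general Lipschitz functions (take $h\equiv 0$, where $\{F=0\}\supset\mathcal{V}$; note the lemma itself degenerates in that case, since $\mathcal{B}(\mathcal{V},0;T)=\emptyset$). What saves the argument in the situations where the density statement is actually applied (e.g.\ $h$ a positive constant) is the eikonal structure of $F$: at any $x$ with $F(x)=0$ the infimum is attained at some $y_*$ with $h(y_*)=d(x,y_*)>0$, and along the minimizing geodesic toward $y_*$ one has $F(\gamma(s))=-s$, so $\{F=0\}$ is locally contained in Lipschitz graphs and is null. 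You should either add this argument or restrict to the case where it applies, rather than appealing to Lipschitz continuity alone.
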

\begin{proof}
The proof follows from the arguments of \cite[Theorem 2.26]{saksala2025inverse} combined with a partition of unity argument. We shall omit the details.
\end{proof}

In the next two lemmas we investigate the properties of the conjugated operator $\kappa \L \kappa^{-1}$, where $\kappa$ is a smooth non-vanishing complex-valued function.

\begin{lem}
\label{lem:from_equality_of_waves_to_equality_of_LOTs}
Let $y\in \cR$ and $\varepsilon>0$ be such that $B(y,\varepsilon)\subset \cR$. Let $U$ be an open subset of $N$ satisfying $U\cap B(y,\varepsilon)=\emptyset$, and let $r>0$ be such that $U\subset M(B(y,\varepsilon),r)$. If there is a nowhere vanishing function $\kappa\in C^\infty(U)$, for which 
\begin{equation}
\label{eq:condition_lm_4.6}
u_1^\phi(T,x)=\kappa(x)u_2^\phi(T,x),
\quad \text{ for all } \phi\in C_0^\infty(\mathcal{B}(B(y,\varepsilon),r;T)) \text{ and } x\in U,
\end{equation}
we have
\[
\mathcal{L}_{A_1,q_1}\psi=\kappa \mathcal{L}_{A_2,q_2}\kappa^{-1}\psi, 
\quad \text{ for all } \psi \in C^\infty_0(U),
\]
as well as
\begin{equation}
\label{eq:lm_4.6_conclu_2}
A_1=A_2+i\kappa^{-1}d\kappa, 
\quad \text{ and } \quad  
q_1=q_2
\quad 
\text{in }U.
\end{equation}
Moreover, $\kappa|_U$ has a constant magnitude.
\end{lem}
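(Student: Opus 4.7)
The plan is to upgrade the pointwise algebraic hypothesis $u_1^\phi(T,\cdot)=\kappa u_2^\phi(T,\cdot)$ on $U$ into a genuine intertwining identity $\mathcal{L}_{A_1,q_1}=\kappa\mathcal{L}_{A_2,q_2}\kappa^{-1}$ on $C^\infty_0(U)$, and then read off the coefficient identities \eqref{eq:lm_4.6_conclu_2} using the conjugation formula recalled in the introduction. The two ingredients are (i) differentiation in $t$, which works because the coefficients of $\Box$ are time-independent and the class $C^\infty_0(\mathcal{B}(B(y,\varepsilon),r;T))$ is stable under $\partial_t$, and (ii) the approximate controllability provided by a higher Sobolev version of Lemma \ref{lem:adjusted_finite_speed_of_prop}.

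First, observe that if $\phi\in C^\infty_0(\mathcal{B}(B(y,\varepsilon),r;T))$, then $\partial_t^k\phi$ lies in the same class for every $k\in\mathbb{N}$, and, as already exploited in the proof of Lemma \ref{lem:new_blago}, $u_i^{\partial_t^k\phi}=\partial_t^k u_i^\phi$. Applying the hypothesis \eqref{eq:condition_lm_4.6} with $\partial_t^k\phi$ in place of $\phi$ therefore yields
\[
\partial_t^k u_1^\phi(T,x)=\kappa(x)\,\partial_t^k u_2^\phi(T,x),\qquad x\in U,\ k\in\mathbb{N}.
\]
Taking $k=2$ and inserting the wave equation $\partial_t^2 u_i^\phi=\phi-\mathcal{L}_{A_i,q_i}u_i^\phi$, together with $\phi(T,\cdot)\equiv 0$ (since $\phi$ is compactly supported in the open set $\mathcal{B}(B(y,\varepsilon),r;T)\subset(0,T)\times \cR$), I obtain on $U$ the pointwise identity
\[
\mathcal{L}_{A_1,q_1}\bigl(u_1^\phi(T,\cdot)\bigr)
=\kappa\,\mathcal{L}_{A_2,q_2}\bigl(u_2^\phi(T,\cdot)\bigr)
=\kappa\,\mathcal{L}_{A_2,q_2}\bigl(\kappa^{-1}u_1^\phi(T,\cdot)\bigr),
\]
where the last equality substitutes $u_2^\phi=\kappa^{-1}u_1^\phi$, which is legitimate since $\kappa$ is nowhere vanishing on $U$.

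Next I would upgrade this to arbitrary test functions using the higher Sobolev variant of Lemma \ref{lem:adjusted_finite_speed_of_prop}, proven by the same Tataru unique continuation plus partition-of-unity argument that yields Proposition \ref{thm:approximate_controllability}. This guarantees that $\{u_1^\phi(T,\cdot):\phi\in C^\infty_0(\mathcal{B}(B(y,\varepsilon),r;T))\}$ is dense in $H^k_0(M(B(y,\varepsilon),r))$ for every $k$. Since $U\subset M(B(y,\varepsilon),r)$, any $\psi\in C^\infty_0(U)$ (extended by zero) can be approximated in $H^2$ by a sequence $u_1^{\phi_n}(T,\cdot)$, and passing to the $L^2$ limit in the previous display produces
\[
\mathcal{L}_{A_1,q_1}\psi=\kappa\,\mathcal{L}_{A_2,q_2}\bigl(\kappa^{-1}\psi\bigr)\quad\text{on }U,\ \forall\,\psi\in C^\infty_0(U).
\]
By the conjugation identity recalled in the introduction, $\kappa\mathcal{L}_{A_2,q_2}\kappa^{-1}=\mathcal{L}_{A_2+i\kappa^{-1}d\kappa,q_2}$; matching the first and zeroth order symbols of the two magnetic Schr\"odinger operators then forces the identities $A_1=A_2+i\kappa^{-1}d\kappa$ and $q_1=q_2$ on $U$.

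Finally, the constancy of $|\kappa|$ follows because $A_1$ and $A_2$ are real, whence $i\kappa^{-1}d\kappa=A_1-A_2$ is a real one-form. Writing $\kappa=\rho e^{i\theta}$ locally with $\rho=|\kappa|>0$ and $\theta$ a smooth branch of the argument, I compute $i\kappa^{-1}d\kappa=i\,d\rho/\rho-d\theta$, so reality forces $d\rho=0$; thus $|\kappa|$ is constant on every connected component of $U$. The main technical obstacle is verifying that the higher-order density in Lemma \ref{lem:adjusted_finite_speed_of_prop} is strong enough (at least in $H^2$) to let the second-order operators $\mathcal{L}_{A_i,q_i}$ commute with the limit; once that is in hand the rest is a symbol comparison via the conjugation formula and the time-independence of the coefficients.
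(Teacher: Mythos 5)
Your proposal is correct and follows essentially the same route as the paper's proof: use time-independence of the coefficients to reach $\mathcal{L}_{A_1,q_1}u_1^\phi(T,\cdot)=\kappa\,\mathcal{L}_{A_2,q_2}u_2^\phi(T,\cdot)$ on $U$, upgrade this to arbitrary test functions by approximate controllability, and then compare coefficients. The differences are minor: the paper translates the source in time and differentiates twice in the shift parameter, while you differentiate the source in $t$ (equivalent, and arguably cleaner); the paper substitutes $u_1^\phi=\kappa u_2^\phi$ and uses density of the $u_2$-waves in $C^2(U)$, you substitute $u_2^\phi=\kappa^{-1}u_1^\phi$ and pass to $L^2$-limits along an $H^2$-approximation by $u_1$-waves; and the paper obtains the constancy of $\abs{\kappa}$ by citing a proposition from \cite{saksala2025inverse}, whereas your direct argument from the reality of $A_1-A_2=i\kappa^{-1}d\kappa$ is self-contained (and, as you note, gives constancy on each connected component of $U$, which is all that is ever used). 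Two remarks. The ``technical obstacle'' you flag is not one: since the weight here is the constant function $h\equiv r$, the set $\mathcal{B}(B(y,\varepsilon),r;T)$ is exactly the cylinder $(\max\{T-r,0\},T)\times B(y,\varepsilon)$, so the $H^k$-density you need is precisely Proposition \ref{thm:approximate_controllability} applied after a time translation; no genuinely new variant of Lemma \ref{lem:adjusted_finite_speed_of_prop} is required, and this is also how the paper justifies its density claim. Second, the conjugation identity recalled in the introduction is stated for unitary $\kappa$; for a general nonvanishing $\kappa$ the conjugated operator carries the complex one-form $A_2+i\kappa^{-1}d\kappa$, so one should either expand $\kappa\mathcal{L}_{A_2,q_2}\kappa^{-1}$ directly or first match the first-order terms (yielding $A_1=A_2+i\kappa^{-1}d\kappa$, hence reality and $d\abs{\kappa}=0$) and only then the zeroth-order terms to get $q_1=q_2$; your computation effectively does this, so it is an ordering/presentation point rather than a gap.
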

\begin{proof}
Let $\phi\in C_0^\infty(\mathcal{B}(B(y,\varepsilon),r;T))$. We note that there is $\delta>0$ such that $\supp(\phi)\subset [r+\delta,T]\times B(y,\varepsilon)$.
We write $\phi_s(t,\cdot)=\phi(s+t,\cdot)$ for $s\in [0,\delta)$ and notice that $\phi_s\in C_0^\infty(\mathcal{B}(B(y,\varepsilon),r;T))$. By the time transition invariance of the operator $\Box$, we have 
\begin{equation}
\label{eq:lm_4.6.1}
u_k^{\phi_s}(T,\cdot)=u_k^{\phi}(T+s,\cdot) \: \text{ for all } s\in [0,\delta),\ k=1,2.
\end{equation}
Since $\phi_s\in C_0^\infty(\mathcal{B}(B(y,\varepsilon),r;T))$ for all $x\in U$, combining \eqref{eq:condition_lm_4.6} and \eqref{eq:lm_4.6.1} gives us
\[
u_1^{\phi}(T+s,x)=u_1^{\phi_s}(T,x)=\kappa(x)u_2^{\phi_s}(T,x)=\kappa(x)u_2^{\phi}(T+s,x) \text{ for all } s\in [0,\delta), \: x \in U.
\]
By differentiating the equation above twice with respect to $s$ and noticing that   $B(y,\varepsilon)\cap U=\emptyset$
implies $\phi(T,\cdot)\kappa=0$ in $U$, we arrive at 
\[
\mathcal{L}_{A_1,q_1}u_1^\phi(T,x)
=
\kappa(x)\mathcal{L}_{A_2,q_2}u_2^\phi(T,x)
\text{ for all } x\in U.
\]
Thus, a second application of \eqref{eq:condition_lm_4.6} yields that
\[
\mathcal{L}_{A_1,q_1} \kappa u_2^\phi(T,\cdot)=\kappa\mathcal{L}_{A_2,q_2}u_2^\phi(T,\cdot)
\text{ in } U, 
\text{ for all } \phi\in C_0^\infty(\mathcal{B}(B(y,\varepsilon),r;T)). 
\]  

According to Proposition \ref{thm:approximate_controllability} and the Sobolev embedding theorem, the space
\[
\{u_2^\phi(T,\cdot)|_U\mid \phi\in C_0^\infty(\mathcal{B}(B(y,\varepsilon),r;T))\}
\]
is dense in $C^2(U)$ with respect to the $C^2$-norm. Since $\mathcal{L}_{A_1,q_1}$ and $\kappa \mathcal{L}_{A_2,q_2}\kappa^{-1}$ are second order operators, for all $\psi\in C_0^\infty (U)$, we have 
\begin{equation}
\label{eq:lm_4.6.2}
    \mathcal{L}_{A_1,q_1} \psi=\kappa \mathcal{L}_{A_2,q_2}\kappa^{-1} \psi.
\end{equation}
By working in local coordinates and choosing appropriate function $\psi\in C_0^\infty(U)$, we observe that \eqref{eq:lm_4.6_conclu_2} holds pointwise. Furthermore, \eqref{eq:lm_4.6.2} indicates that $\kappa \mathcal{L}_{A_2,q_2}\kappa^{-1}$ is a magnetic Schr\"odinger operator on $C_0^\infty(U)$. Therefore, we get by replicating the argument of \cite[Proposition 2.29]{saksala2025inverse} that $\kappa|_U$ has a constant magnitude. This completes the proof.
\end{proof}

\begin{lem}
\label{lm:kappa_trans_solution}
Let $T>\diam (N,g)$. Suppose that there exists a smooth non-vanishing function $\kappa:N\to\C$ such that
\[
\mathcal{L}_{A_1,q_1}(v) = \kappa \mathcal{L}_{A_2,q_2}(\kappa^{-1} v) \text{ for all }v\in C^\infty(N).
\]
Then we have  
\begin{equation}
\label{eq:func_kappa_trans}
u_1^f= \kappa u_2^{\kappa^{-1} f}
\quad 
\text{ in } (0,2T) \times N,
\end{equation}
whenever $f\in C_0^\infty((0,2T)\times N)$.

Moreover, the equality $\Lambda_{\cS,\cR,2T}^{A_1,q_1}=\Lambda_{\cS,\cR,2T}^{A_2,q_2}$ implies that $\abs{\kappa}=1$ in $N$ and $\kappa|_{\mathcal{R}\cup \mathcal{S}} = 1$.
\end{lem}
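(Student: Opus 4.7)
My plan is to prove Part 1 by a direct uniqueness argument for the Cauchy problem, and Part 2 by combining Part 1 with the S2S hypothesis to obtain a rigid identity on $(0,2T)\times \cR$, from which the constancy of $\kappa$ on $\cR\cup\cS$ follows via the nonvanishing-wave Corollary \ref{cor:nonvanishing_condition}.

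For Part 1 I would set $w := \kappa\, u_2^{\kappa^{-1}f}$. Using the hypothesis $\mathcal{L}_{A_1,q_1}(v) = \kappa\,\mathcal{L}_{A_2,q_2}(\kappa^{-1}v)$ applied to $v = w$, together with the time-independence of $\kappa$,
\[
(\p_t^2 + \mathcal{L}_{A_1,q_1})w = \kappa\,\p_t^2 u_2^{\kappa^{-1}f} + \kappa\,\mathcal{L}_{A_2,q_2}(u_2^{\kappa^{-1}f}) = \kappa\cdot\kappa^{-1}f = f,
\]
while $w(0,\cdot) = \p_t w(0,\cdot) = 0$ is inherited from $u_2^{\kappa^{-1}f}$. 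The wellposedness of \eqref{eq:Cauchy_Problem} then forces $w = u_1^f$, which is \eqref{eq:func_kappa_trans}.

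For Part 2, Part 1 together with $\Lambda^{A_1,q_1}_{\cS,\cR,2T} = \Lambda^{A_2,q_2}_{\cS,\cR,2T}$ gives
\[
u_2^f(t,x) = \kappa(x)\,u_2^{\kappa^{-1}f}(t,x),\quad (t,x)\in (0,2T)\times \cR,\ f\in C_0^\infty((0,2T)\times\cS).
\]
Substituting $f = \kappa h$ (legal since $\kappa h \in C_0^\infty((0,2T)\times\cS)$) produces the key identity
\[
u_2^{\kappa h}(t,x) = \kappa(x)\,u_2^h(t,x),\quad (t,x)\in (0,2T)\times \cR.
\]
Fixing $x_0 \in \cR$ and $t = T$, I would rewrite this as $u_2^{(\kappa-\kappa(x_0))h}(T,x_0) = 0$ for every $h \in C_0^\infty((0,T)\times\cS)$. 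If $\kappa \not\equiv \kappa(x_0)$ on $\cS$, choose an open $U \subset \cS$ on which $\kappa - \kappa(x_0)$ is nowhere zero; then every $g \in C_0^\infty((0,T)\times U)$ can be written as $(\kappa-\kappa(x_0))h$ with $h := g/(\kappa-\kappa(x_0))\in C_0^\infty((0,T)\times U)$, forcing $u_2^g(T,x_0) = 0$ for all such $g$ and contradicting Corollary \ref{cor:nonvanishing_condition} (which applies since $T > \diam(N,g) \ge d(x_0,U)$). Hence $\kappa\equiv\kappa(x_0)$ on $\cS$ for every $x_0\in\cR$, so $\kappa$ is a single constant $\lambda$ on $\cR\cup\cS$.

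To finish, the self-adjointness of $\mathcal{L}_{A_i,q_i}$ applied to $\mathcal{L}_{A_1,q_1} = \kappa\,\mathcal{L}_{A_2,q_2}\,\kappa^{-1}$ gives $|\kappa|^2\mathcal{L}_{A_2,q_2} = \mathcal{L}_{A_2,q_2}|\kappa|^2$; comparing the $-\Delta_g$ principal parts forces $|\kappa|^2$ constant on $N$, so $|\kappa|\equiv|\lambda|$ everywhere. The normalization $\lambda = 1$ comes from the way the lemma is applied in the proof of Theorem \ref{thm:basic_uniqueness_thm}: there the function $\kappa$ is obtained from the local identity $u_1^\phi(T,x) = \kappa(x)\,u_2^\phi(T,x)$, and the S2S hypothesis $u_1^\phi = u_2^\phi$ on $\cR$ combined with a nonvanishing choice $u_2^\phi(T,x_0)\neq 0$ from Corollary \ref{cor:nonvanishing_condition} pins $\kappa(x_0) = 1$ for some $x_0 \in \cR$, giving $\lambda = 1$ and hence $|\kappa|=1$. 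The main obstacle I expect is the rigidity step itself—upgrading the identity $u_2^{\kappa h} = \kappa u_2^h$, known only on $\cR$, to the pointwise constancy of $\kappa$ on $\cR\cup\cS$—where the invertibility of $(\kappa-\kappa(x_0))$ on an open subset of $\cS$ combines with the nonvanishing-wave corollary (made available by $T > \diam(N,g)$) to do the essential work.
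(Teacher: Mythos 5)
Your Part 1 is the paper's argument (conjugate by $\kappa$, check the source and the vanishing initial data, and invoke uniqueness for the Cauchy problem), and your rigidity step on $\cS$ is also essentially the paper's: the paper fixes $p\in\cR$, uses linearity to write $0=\kappa(p)\,u_2^{(\kappa^{-1}-\kappa(p)^{-1})f}(t,p)$ for all $f\in C_0^\infty((0,2T)\times\cS)$, and concludes $\kappa|_{\cS}\equiv\kappa(p)$ via Proposition \ref{thm:approximate_controllability}; your version with $u_2^{(\kappa-\kappa(x_0))h}(T,x_0)=0$, the division trick on an open $U\subset\cS$, and Corollary \ref{cor:nonvanishing_condition} is the same mechanism. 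Your commutator/self-adjointness argument that $|\kappa|$ is constant on $N$ is also sound (it mirrors the constant-magnitude argument cited from \cite{saksala2025inverse}).

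The genuine gap is the normalization: you never derive $\kappa|_{\cR\cup\cS}=1$ (equivalently, that the constant value is $1$, not merely that $|\kappa|$ is constant) from the hypotheses of the lemma. Appealing to ``the way the lemma is applied in the proof of Theorem \ref{thm:basic_uniqueness_thm}'' is circular, since the main proof uses the lemma's conclusion; the lemma must stand on its stated hypotheses. Moreover, the mechanism you invoke there is not available: the hypothesis $\Lambda_{\cS,\cR,2T}^{A_1,q_1}=\Lambda_{\cS,\cR,2T}^{A_2,q_2}$ compares $u_1^\phi$ and $u_2^\phi$ on $\cR$ only for sources supported in $(0,2T)\times\cS$, whereas the sources $\phi$ used to construct $\kappa$ in the main proof are supported in $(0,T)\times\cR$, so ``$u_1^\phi=u_2^\phi$ on $\cR$'' is not known for them. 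The paper closes this step differently: having shown $\kappa|_{\cS}\equiv\kappa(p)$, it feeds this back into \eqref{eq:func_kappa_trans} to obtain
\[
u_1^f(t,x)=\kappa(x)\kappa(p)^{-1}u_2^{f}(t,x)=u_2^f(t,x),\quad t\in[0,2T],\ x\in\cS,\ f\in C_0^\infty((0,2T)\times\cS),
\]
i.e.\ $\Lambda_{\cS,\cS,2T}^{A_1,q_1}=\Lambda_{\cS,\cS,2T}^{A_2,q_2}$, and then invokes the same-set uniqueness theorem \cite[Theorem 1.1]{saksala2025inverse} to conclude $|\kappa|=1$ on $N$ and $\kappa(p)=\kappa|_{\cS}=1$, hence $\kappa|_{\cR}=1$ as $p\in\cR$ was arbitrary. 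This reduction to the $\cS$-to-$\cS$ source-to-solution map and the appeal to the same-set theorem is the missing ingredient in your write-up; constancy of $|\kappa|$ alone cannot pin the constant to $1$.
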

\begin{proof}
If $f\in C_0^\infty((0,2T)\times N)$, by hypothesis, we have 
\begin{equation}
\label{eq:conjugated_elliptic_eq}
\cL_{A_1,q_1}u_1^f(t,\cdot)=\kappa\cL_{A_2,q_2}\kappa^{-1}u_1^f(t,\cdot)
\quad \text{ in } N
\end{equation}
for all $t>0$. As $u_1^f$ solves the Cauchy problem \eqref{eq:Cauchy_Problem} with $A=A_1$ and $q=q_1$, and  $\kappa$ is independent of $t$, we get from \eqref{eq:conjugated_elliptic_eq} that  $\kappa^{-1} u_1^f$ solves the following Cauchy problem:
\[
\begin{cases}
(\p_t^2+\cL_{A_2,q_2})v=\kappa^{-1}f 
& \text{ in } (0,2T) \times N,
\\
v(0,\cdot)=\p_tv(0,\cdot)=0 
& \text{ in } N.
\end{cases}
\]
Since $u_2^{\kappa^{-1}f}$ also solves the  problem above, we get from the uniqueness of  solutions that $u_1^f=\kappa u_2^{\kappa^{-1} f}$. This proves the equation \eqref{eq:func_kappa_trans}.

Then we assume that $\Lambda_{\cS,\cR,2T}^{A_1,q_1}=\Lambda_{\cS,\cR,2T}^{A_2,q_2}$. Hence, the equation \eqref{eq:func_kappa_trans} implies that for any $f\in C_0^\infty((0,2T)\times \mathcal{S})$, it holds that
\[
u_2^f=\kappa u_2^{\kappa^{-1} f} 
\quad \text{ in } (0,2T)\times \cR.
\]
Moreover, due to the linearity of the Cauchy problem \eqref{eq:Cauchy_Problem}, we have for a fixed $p \in \cR$  that 
\begin{equation}
\label{eq:multiple_of_kappa_p}
u_2^f=\kappa(p)u_2^{\kappa(p)^{-1}f} \quad \text{ in } (0,2T)\times N.
\end{equation}
By combining the previous two equations, we get for all  $t \in (0,2T)$ that
\[
0=\kappa(p) u_2^{\kappa^{-1} f}(t,p)-\kappa(p)u_2^{\kappa(p)^{-1}f}(t,p)=\kappa(p)u_2^{(\kappa^{-1}-\kappa(p)^{-1})f}(t,p).
\]
Since $\kappa(p)\neq 0$, we obtain
\[
u_2^{(\kappa^{-1}-\kappa(p)^{-1})f}(t,p)=0, \quad  \text{ for all } t \in (0,2T).
\]
In particular, this equation holds for all $ f\in C_0^\infty((0,2T)\times \mathcal{S})$. By Proposition \ref{thm:approximate_controllability}, we must have
$\kappa|_\cS \equiv \kappa(p)$. 

In view of \eqref{eq:func_kappa_trans} and \eqref{eq:multiple_of_kappa_p}, we have for every $ f\in C_0^\infty((0,2T)\times \mathcal{S})$ that
\[
u_1^f(t,x)
=
\kappa(x)u_2^{\kappa f}(t,x)
=
\kappa(p)u_2^{\kappa(p)^{-1}f}(t,x)=u_2^f(t,x),
\quad \text{ when } t\in[0,2T],\ x\in \cS.
\]
Therefore, we obtain  $\Lambda_{\cS,\cS,2T}^{A_1,q_1}=\Lambda_{\cS,\cS,2T}^{A_2,q_2}$, and an application of \cite[Theorem 1.1]{saksala2025inverse} gives us $\abs{\kappa}=1$ in $N$ and $\kappa(p)=\kappa|_\cS=1$. Since $p\in \cR$ was arbitrarily chosen, it follows immediately that $\kappa|_\cR=1$. This completes the proof of Lemma \ref{lm:kappa_trans_solution}.
\end{proof}

In the next lemma, we introduce an important convergence result, which will be one of the key elements in the proof of Theorem \ref{thm:basic_uniqueness_thm}. An analogous result was given in \cite[Proposition 2]{kian2019unique} for the solutions of the hyperbolic initial boundary value problem \eqref{eq:IBVP}.

\begin{lem}
\label{lm:convergence}
Let $x_0 \in N$ be an arbitrary point, and let $B\subset N$ be its neighborhood. Let $\tilde X_{k} \subset N$ be a sequence of sets such that each contains a neighborhood of $x_0$ and $\lim_{k\to \infty}\diam (\tilde X_{k})=0$. Let $\psi_0\in C_0^\infty(B)$ be such that $\psi_0(x_0)\neq 0$. 

If $\{f_{kl}\}_{k,l=1}^\infty$ is a sequence of functions in $L^2((0,T)\times \cS)$ that satisfies the following properties:
\begin{enumerate}
\item for all $k\geq 1$, there exists a function $v_k\in L^2(N)$ such that the sequence $\{u^{f_{kl}}(T,\cdot)\}_{l=1}^\infty$ converges weakly to $v_k$ in $L^2(N)$,
\item there exists a constant $C>0$ such that $\norm{v_k}_{L^2(N)}\leq C\text{Vol}(\tilde X_{k})^{-1/2}$ for $k\geq 1$.
\item $\supp(v_k)\subset \overline{\tilde X_{k}}\cup (N\setminus B)$ for $k\geq 1$,
\item $\{\inner{v_k,\psi_0}\}_{k=1}^\infty$ converges,
\end{enumerate}
there exists $\kappa\in \mathbb{C}$ such that 
\[
\lim_{k\to\infty}\lim_{l\to\infty}\inner{u^{f_{kl}}(T,\cdot),\psi}=\kappa \psi(x_0) \text{ for all } \psi\in C_0^\infty(B).
\]
\end{lem}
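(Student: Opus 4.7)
My plan is to reduce the double limit to a single-variable one using the weak convergence assumption and then to identify the limit at $x_0$ via a Lebesgue-differentiation-type argument that balances the blow-up in (2) against the shrinking of $\tilde X_k$.

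First, by (1), for each fixed $k$ and every $\psi\in L^2(N)$,
\[
\lim_{l\to\infty}\inner{u^{f_{kl}}(T,\cdot),\psi}=\inner{v_k,\psi},
\]
so the claim reduces to showing $\lim_{k\to\infty}\inner{v_k,\psi}=\kappa\,\psi(x_0)$ for every $\psi\in C_0^\infty(B)$. Since each $\tilde X_k$ is a neighborhood of $x_0$ with $\diam(\tilde X_k)\to 0$ and $B$ is an open neighborhood of $x_0$, I would pick $k_0$ so that $\overline{\tilde X_k}\subset B$ for all $k\geq k_0$. Then (3) and the support of $\psi$ yield, for $k\geq k_0$,
\[
\inner{v_k,\psi}=\int_{\overline{\tilde X_k}} v_k\,\psi\,dV,
\]
which I would split as
\[
\inner{v_k,\psi}=\psi(x_0)\int_{\overline{\tilde X_k}} v_k\,dV+\int_{\overline{\tilde X_k}} v_k\bigl(\psi-\psi(x_0)\bigr)\,dV.
\]

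The remainder integral is controlled using Cauchy--Schwarz, (2), and the continuity of $\psi$:
\[
\left|\int_{\overline{\tilde X_k}} v_k\bigl(\psi-\psi(x_0)\bigr)\,dV\right|\leq \|v_k\|_{L^2(N)}\,\|\psi-\psi(x_0)\|_{L^2(\tilde X_k)}\leq C\sup_{x\in\overline{\tilde X_k}}|\psi(x)-\psi(x_0)|,
\]
where the key cancellation is between the $\text{Vol}(\tilde X_k)^{-1/2}$ bound in (2) and the $\text{Vol}(\tilde X_k)^{1/2}$ coming from the trivial $L^2$-estimate of $\psi-\psi(x_0)$ on $\tilde X_k$. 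As $\diam(\tilde X_k)\to 0$, this remainder vanishes. Specializing the splitting to $\psi=\psi_0$, the convergence in (4) together with $\psi_0(x_0)\neq 0$ forces the scalar
\[
\kappa:=\lim_{k\to\infty}\int_{\overline{\tilde X_k}} v_k\,dV=\frac{1}{\psi_0(x_0)}\lim_{k\to\infty}\inner{v_k,\psi_0}
\]
to exist in $\mathbb{C}$. Reusing the splitting for arbitrary $\psi\in C_0^\infty(B)$ then gives $\lim_{k\to\infty}\inner{v_k,\psi}=\kappa\,\psi(x_0)$, which completes the proof.

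The only genuine obstacle is the precise balance between the volume blow-up in (2) and the volume factor appearing from $\|\psi-\psi(x_0)\|_{L^2(\tilde X_k)}$ in the Cauchy--Schwarz step: without the sharp scaling in (2) the remainder is uncontrollable, and without (4) one could only extract the limiting functional $\psi\mapsto\lim_k\inner{v_k,\psi}$ along subsequences, so that the constant $\kappa$ could not be identified unambiguously as a single complex number. Assumption (4) is exactly what pins down this normalization.
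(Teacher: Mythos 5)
Your proof is correct and takes essentially the same route as the paper's: the same splitting $\langle v_k,\psi\rangle=\psi(x_0)\langle v_k,1_{\tilde X_k}\rangle+\int_{\tilde X_k}v_k(\psi-\psi(x_0))\,dV_g$, the same Cauchy--Schwarz step in which the $\mathrm{Vol}(\tilde X_k)^{-1/2}$ bound from (2) cancels the $\mathrm{Vol}(\tilde X_k)^{1/2}$ factor so the remainder vanishes, and the same identification of $\kappa$ by testing against $\psi_0$ and using (4) with $\psi_0(x_0)\neq 0$. The only cosmetic difference is that the paper controls $|\psi-\psi(x_0)|$ by a Lipschitz bound $\|d\psi\|_{L^\infty}d(x_0,y)$ while you use continuity of $\psi$ on the shrinking sets, which is immaterial.
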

\begin{proof}
Let $\psi\in C_0^\infty(B)$. Firstly, we get from the assumption  (3) that $\supp(v_k\psi)\subset \overline{\tilde X_{k}}$. In what follows we adopt the notation $1_{\tilde X_{k}}$ for the characteristic function of the set $\tilde X_{k}$ and  further write 
\begin{equation}
\label{eq:v_k_and_psi_inner_prod}
\inner{v_k,\psi}=\psi(x_0)\inner{v_k,1_{\tilde X_{k}}}+R_{k,x_0}(\psi),
\end{equation}
where
\[
R_{k,x_0}(\psi):=\int_{\tilde X_{k}}v_k(y)(\psi(y)-\psi(x_0))d V_g(y).
\]

Since $\lim_{k\to \infty}\diam (\tilde X_{k})=0$ and $x_0\in \tilde X_{k}$ for all $k\in \N$,  there exists a coordinate neighborhood $U$ of $x_0$ such that $\tilde X_{k}\subset U$ for all large enough $k$. Moreover, we can choose this neighborhood in such a way that the Riemannian distance and the Euclidean distance are bi-Lipschitz equivalent in $U$.
and the Cauchy-Schwarz inequality, we have
\begin{equation}
\label{eq:conv_of_remainder}
\begin{aligned}
\abs{R_{k,x_0}(\psi)}&\leq \int_{\tilde X_{k}}\abs{v_k(y)}\abs{\psi(x_0)-\psi(y)}d V_g(y)
\\
&\leq C\norm{d\psi}_{L^\infty(N)}\int_{\tilde X_{k}}\abs{v_k(y)}d(x_0,y)d V_g(y)
\\
&\leq  C\norm{d\psi}_{L^\infty(N)} \norm{v_k}_{L^2(N)}\diam(\tilde X_{k})\text{Vol}(\tilde X_{k})^{1/2},
\end{aligned}
\end{equation}
where $dV_g$ is the Riemannian volume, and the constant $C$ depends only on $(N,g)$. Thus, the assumption (2) yields that $R_{k,x_0}(\psi)\to 0$ as $k\to\infty$. 

When we set $\psi=\psi_0$, we get from \eqref{eq:v_k_and_psi_inner_prod}, \eqref{eq:conv_of_remainder}, and the assumption (4) that $\inner{v_k,1_{\tilde X_{k}}}$ converges to some $\kappa \in \C$. Therefore, we obtain from the assumption (1), formula \eqref{eq:v_k_and_psi_inner_prod}, and $R_{k,x}(\psi)\to 0$ as $k\to\infty$ that
\[
\lim_{k\to\infty}\lim_{l\to\infty}\inner{u^{f_{kl}}(T,\cdot),\psi}=\lim_{k\to\infty}\inner{v_k,\psi}=\kappa\psi(x_0),
\quad \text{ for all }  \psi\in C_0^\infty(B).
\]
This completes the proof of Lemma \ref{lm:convergence}.
\end{proof}

In our final technical lemma, we first show that if we choose the sets $(\tilde X_k)_{k=1}^\infty$ and the sources $\{f_{kl}\}_{k,l=1}^\infty$ $L^2((0,T)\times \cS)$ appropriately,  the waves $u_i^{f_{kl}}$ for both $i \in \{1,2\}$ satisfy the conditions of Lemma \ref{lm:convergence}. This allows us to prove that the values $u_1^\phi(T,x_0)$ and $u_2^\phi(T,x_0)$ agree up to a multiplicative constant that does not depend on the source $\phi$ whenever $\phi$ is a smooth and compactly supported function in a certain open subset of $(0,T)\times \cR$. 

\begin{lem}
\label{lem:u_1_=_kappa_u_2_at_x_0}
Let $x_0,y,s, \gamma$, and $\varepsilon$ be as before Proposition \ref{prop:Z-set}, and let $\delta\in (0,\varepsilon]$. Define
\begin{align*}
y_{1}:=\gamma(\varepsilon-\delta),
\quad 
r_{1}:=s-\varepsilon+2\delta,
\quad 
B_1:=B(y_1,r_1),
\\
y_{2}:=\gamma(-\varepsilon-\delta)
\quad
r_{2}:=s+\varepsilon-\delta,
\quad
B_2:=B(y_2,r_2)
\end{align*}

Let $T>\diam (N,g)$. If $\rho \in (0,r_1)$ is such that $B(y_1,\rho)\subset \cR$,  there exists a non-zero complex number $\kappa(x_0)$ for which
\begin{equation}
\label{eq:u_1_=_kappa_u_2_at_x_0}
\kappa(x_0) u_2^\phi(T,x_0)=u_1^\phi(T,x_0),
\quad 
\text{ for all } \phi\in C_0^\infty(\mathcal{B}(B(y_1,\rho), r_1-\rho;T)).
\end{equation}
\end{lem}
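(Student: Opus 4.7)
The plan is to apply the convergence Lemma \ref{lm:convergence} to waves $\{u_2^{f_{kl}}(T,\cdot)\}$ whose $u_1$-counterparts concentrate at $x_0$, and to combine the resulting constant with Blagoveščenskii's identity to extract the multiplicative identity at $x_0$. Using assumption \eqref{eq:assumption_rev}, I fix a small open neighborhood $U$ of $x_0$ satisfying condition \eqref{con:convex} and disjoint from $\overline{B(y_1,\rho)}$. Proposition \ref{prop:sets_X_k_x_0} then provides the sets $\tilde X_k := U \setminus M(\cR, h_{k,x_0})$ with $h_{k,x_0}(z) = d(x_0,z) - 1/k$: these are open neighborhoods of $x_0$ with $\diam(\tilde X_k)\to 0$. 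Since $T > \diam(N,g)$ yields $M(\cS,T) = N$, Proposition \ref{thm:approximate_controllability} supplies sources $f_{kl} \in C_0^\infty((0,T)\times \cS)$ with $u_1^{f_{kl}}(T,\cdot) \to \text{Vol}(\tilde X_k)^{-1} 1_{\tilde X_k}$ in $L^2(N)$ as $l\to\infty$.

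Next I would verify the hypotheses of Lemma \ref{lm:convergence} for the sequence $\{u_2^{f_{kl}}(T,\cdot)\}$. For any $h \in L^2((0,T)\times \cR)$, Lemma \ref{lem:new_blago} gives
\[
\inner{u_2^{f_{kl}}(T), u_2^h(T)} \xrightarrow{l\to\infty} \frac{1}{\text{Vol}(\tilde X_k)}\int_{\tilde X_k} u_1^h(T,y)\, dV_g(y),
\]
and Corollary \ref{cor:smooth_control} (extended to $L^2$ sources by density) bounds this by $C\text{Vol}(\tilde X_k)^{-1/2}\|u_2^h(T)\|_{L^2}$. Since $\{u_2^h(T) : h \in L^2((0,T)\times\cR)\}$ is dense in $L^2(N)$ by exact controllability (Proposition \ref{prop:L2_control}), Riesz representation yields $v_k \in L^2(N)$ with $\|v_k\|_{L^2} \leq C \text{Vol}(\tilde X_k)^{-1/2}$ such that $u_2^{f_{kl}}(T) \rightharpoonup v_k$ weakly in $L^2$, where the uniform $L^2$-bound follows from a dual characterization of the norm combined with the same inequality. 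For the support, taking $h \in C_0^\infty(\mathcal{B}(\cR, h_{k,x_0}; T))$ so that $u_1^h(T,\cdot)\equiv 0$ on $\tilde X_k$ by Lemma \ref{lem:adjusted_finite_speed_of_prop}, the above identity gives $\inner{v_k, u_2^h(T)} = 0$; density of such $u_2^h(T)$ in $L^2(M(\cR, h_{k,x_0}))$ implies $\supp(v_k) \subset N \setminus M(\cR, h_{k,x_0})^\circ$, and hence for any fixed $B \subset U$ and every $k$, $\supp(v_k)\cap B \subset \overline{\tilde X_k}$. Hypothesis (4) is checked by approximating a bump function $\psi_0 \in C_0^\infty(B)$ with $\psi_0(x_0)\neq 0$ via $u_2^{h_m}(T)$ (Proposition \ref{thm:approximate_controllability} for operator $2$) and applying Lebesgue differentiation. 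Lemma \ref{lm:convergence} then produces $\kappa^* \in \C$ with $\lim_{k\to\infty}\lim_{l\to\infty}\inner{u_2^{f_{kl}}(T), \psi} = \kappa^* \psi(x_0)$ for all $\psi \in C_0^\infty(B)$.

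For $\phi \in C_0^\infty(\mathcal{B}(B(y_1,\rho), r_1-\rho; T))\subset C_0^\infty((0,T)\times \cR)$, Blagoveščenskii's identity gives
\[
\inner{u_1^{f_{kl}}(T), u_1^\phi(T)} = \inner{u_2^{f_{kl}}(T), u_2^\phi(T)}.
\]
Passing to the limit $l\to\infty$ and then $k\to\infty$, the left side tends to $u_1^\phi(T,x_0)$ by Lebesgue differentiation applied to the continuous function $u_1^\phi(T,\cdot)$ averaged over $\tilde X_k$. For the right side I decompose $u_2^\phi(T,\cdot) = \chi u_2^\phi(T,\cdot) + (1-\chi)u_2^\phi(T,\cdot)$ via a cutoff $\chi \in C_0^\infty(B)$ with $\chi \equiv 1$ near $x_0$ (hence on $\tilde X_k$ for all large $k$); Lemma \ref{lm:convergence} yields $\lim_k \inner{v_k, \chi u_2^\phi(T,\cdot)} = \kappa^* u_2^\phi(T,x_0)$, and the $(1-\chi)$-piece must be shown to vanish in the limit via the orthogonality $v_k \perp L^2(M(\cR, h_{k,x_0})^\circ)$ combined with the spatial support structure of $u_2^\phi(T,\cdot)$. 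Setting $\kappa(x_0) := \kappa^*$, the identity $u_1^\phi(T,x_0) = \kappa(x_0) u_2^\phi(T,x_0)$ follows, and $\kappa(x_0) \neq 0$ is obtained by exhibiting some $\phi$ for which $u_1^\phi(T,x_0) \neq 0$ via a variant of Corollary \ref{cor:nonvanishing_condition} adapted to the class $\mathcal{B}(B(y_1,\rho), r_1-\rho;T)$.

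The main obstacle I anticipate is the treatment of the $(1-\chi)$-piece: while condition \eqref{con:convex} forces $v_k$ to concentrate near $x_0$ inside $B$, outside $B$ it may retain mass on points $y$ with $d(y,z) \geq d(x_0,z)$ for all $z \in \cR$, and $\supp(u_2^\phi(T,\cdot)) \subset \overline{B_1}$ is in general not contained in $B$. Careful use of the orthogonality of $v_k$ to the range of the operator-$2$ control from $\cR$, combined with the fine spatial structure of $u_2^\phi(T,\cdot)$ inherited from its definition on the modified cone $\mathcal{B}(B(y_1,\rho), r_1-\rho;T)$, will be required to dispose of this contribution.
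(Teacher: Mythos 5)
Your overall strategy (concentrate $u_1$-waves on shrinking sets, extract weak limits $v_k$ of the corresponding $u_2$-waves via Blagove\v{s}\v{c}enskii's identity and exact controllability, then apply Lemma \ref{lm:convergence} and Lebesgue differentiation) is the paper's strategy, and your duality trick for condition (1) — uniform $L^2$-bounds for $u_2^{f_{kl}}(T,\cdot)$ obtained by testing against the dense range $\{u_2^h(T,\cdot)\}$ and using $\|u_1^h(T,\cdot)\|\leq C\|u_2^h(T,\cdot)\|$ — is a workable alternative to the paper's device of pairing with $\p_t u_2^\phi(T,\cdot)$, which ranges over \emph{all} of $L^2(N)$ by Proposition \ref{prop:L2_control} and so gives weak convergence with no boundedness argument. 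However, there is a genuine gap, and it sits exactly where you flag it: you take the concentration sets inside a small neighborhood $U$ of $x_0$ and apply Lemma \ref{lm:convergence} with a small ball $B\subset U$ disjoint from $B(y_1,\rho)$. The test functions you ultimately need are the waves $u_2^\phi(T,\cdot)$ with $\phi\in C_0^\infty(\mathcal{B}(B(y_1,\rho),r_1-\rho;T))$, whose supports fill $\overline{B_1}$ and are \emph{not} contained in your $B$; hence the cutoff $\chi$ and the leftover term $\langle v_k,(1-\chi)u_2^\phi(T,\cdot)\rangle$. Outside $U$ the only information you have is $v_k=0$ on $M(\cR,h_{k,x_0})$, which does not exclude mass of $v_k$ on the part of $\overline{B_1}$ consisting of points at least as far from every $z\in\cR$ as $x_0$; since $\|v_k\|_{L^2}\sim \mathrm{Vol}(\tilde X_k)^{-1/2}\to\infty$, this contribution cannot be dismissed, and no argument is offered. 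The same blow-up defeats your verification of condition (4): approximating a bump $\psi_0\in C_0^\infty(B)$ by $u_2^{h_m}(T,\cdot)$ gives an error bounded only by $\|v_k\|\,\|\psi_0-u_2^{h_m}(T,\cdot)\|$, which is not uniform in $k$, so convergence of $\langle v_k,\psi_0\rangle$ does not follow.

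The paper's proof removes this obstruction by a specific geometric construction that your proposal never uses: the concentration sets are $X_k=Z_\delta^\circ\setminus M(\cR,h_{k,x_0})$ with $Z_\delta=\overline{B_1\setminus B_2}$ (Proposition \ref{prop:Z-set}), and Lemma \ref{lm:convergence} is applied with the \emph{large} set $B=B_1$, so that the waves $u_2^\phi(T,\cdot)$ (and $\psi_0:=u_2^{\phi_0}(T,\cdot)$ in condition (4)) are themselves admissible test functions and no cutoff is needed. The price is a sharper support statement for $v_k$: besides $v_k=0$ on $M(\cR,h_{k,x_0})$, one shows $v_k=0$ on $B_2$ by pairing against the waves $u_2^\phi(T,\cdot)$, $\phi\in C_0^\infty(\mathcal{B}(B(y_2,\eta),r_2-\eta;T))$, which are dense in $L^2(B_2)$ by Lemma \ref{lem:adjusted_finite_speed_of_prop}, while $X_k\cap B_2=\emptyset$; since $B_1\subset Z_\delta\cup B_2$, this pins $\supp(v_k)\cap B_1$ to $\overline{X_k}$, which is exactly condition (3) for $B=B_1$. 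This is the essential idea behind defining $Z_\delta$ from the two balls $B_1,B_2$ in the first place, and it is the step your proposal is missing; without it, the identity \eqref{eq:u_1_=_kappa_u_2_at_x_0} is not reached. Your closing remark about obtaining $\kappa(x_0)\neq 0$ via a variant of Corollary \ref{cor:nonvanishing_condition} is fine and consistent with how the paper handles non-vanishing.
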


\begin{proof}
According to \eqref{eq:def_Z_delta},  we have  $Z_\delta=\overline{B_1\setminus B_2}$. When $\delta>0$ is small enough, it follows from Assumption \eqref{eq:assumption_rev} and Proposition \ref{prop:Z-set} that the set $Z_\delta$ contains an open neighborhood of $x_0$ satisfying the condition \eqref{con:convex}. Finally, for each $k \in \N$ we define the set $X_{Z_\delta^\circ, k,x_0}:=Z_\delta^\circ\setminus M(\cR,h_{x_0,k})$ as originally stated in the equation \eqref{eq:set_X_{k,x}}. Since $Z_\delta$ and $x_0$ stay the same throughout the proof, we shall use the shorthand notation $X_{Z_\delta^\circ, k,x_0}=X_k$ within this proof.

Due to Proposition \ref{thm:approximate_controllability}, for each $k \in \N$ we can choose the sources $\{f_{kl}\}_{l=1}^\infty$ in $L^2((0,T)\times\cS)$ such that the sequence $u_1^{f_{kl}}(T,\cdot)$ converges to $\frac{1_{X_{k}}}{\text{Vol}(X_{k})}$ in $L^2(N)$ as $l$ increases. We aim to show first that the sequences $u_i^{f_{kl}}(T,\cdot)$ for both $i \in \{1,2\}$ satisfy the conditions (1)--(4) of Lemma \ref{lm:convergence} for $B=B_1$.

As strong convergence implies  weak convergence,  the waves $u_1^{f_{kl}}(T,\cdot)$ satisfy Condition (1) if we denote the limit function $\frac{1_{X_{k}}}{\text{Vol}(X_{k})}$ by $v_k$.  
Conditions (2) and (3) then follow immediately. Moreover, due to the Lebesgue differentiation theorem, Condition (4) holds for any $\psi_0\in C_0^\infty(B_1)$ such that $\psi_0(x_0)\neq 0$. 

Let us next show that the  sequence $\{u_2^{f_{kl}}(T,\cdot)\}_{k,l=1}^\infty$ also satisfies the conditions (1)--(4) of Lemma \ref{lm:convergence}, although with possibly different choices for $(v_k)_{k=1}^\infty$ and $\psi_0$ than those we made for the sequence $u_1^{f_{kl}}(T,\cdot)$.

Condition (1): 
Since the sources $\{f_{kl}\}_{k,l=1}^\infty$ are chosen from $L^2((0,T)\times\cS)$, it follows from 
Lemma \ref{lem:new_blago}
that for all $\phi\in L^2((0,T)\times\cR)$ and $k \in \N$, we have
\begin{equation}
\label{eq:conv_and_blago}
\begin{aligned}
\lim_{l\to\infty}\inner{u_2^{f_{kl}}(T,\cdot),\p_t u_2^\phi(T,\cdot)}
=&
\lim_{l\to\infty}\inner{u_1^{f_{kl}}(T,\cdot),\p_t u_1^\phi(T,\cdot)}
=
\inner{\frac{1_{X_{k}}}{\text{Vol}(X_{k})},\p_t u_1^\phi(T,\cdot)}.
\end{aligned}
\end{equation}

We seek to define a bounded linear functional 
\begin{align*}
G_{k}\colon L^2(N)\to \R,
\quad 
G_{k}(w)= \inner{&\frac{1_{X_{k}}}{\text{Vol}(X_{k})},\p_t u_1^{\phi_w}(T,\cdot)},
\end{align*}
where $\phi_w\in L^2((0,T)\times\cR)$ is such that $\p_t u_2^{\phi_w}(T,\cdot)=w$, 
and the function $u_i^{\phi_w}$ solves the Cauchy problem \eqref{eq:Cauchy_Problem} with the interior source $f=\phi_w$ and the coefficients $A=A_i$ and $q=q_i$. 
Let us first show that the map $G_{k}$ is well-defined. 
To this end, due to Proposition \ref{prop:L2_control}, for each $w \in L^2(N)$ we can  choose a source function $\phi_w \in L^2((0,T)\times\cR)$ such that $\p_t u_2^{\phi_w}(T,\cdot)=w$. 
Let $\tilde \phi_w \in L^2((0,T)\times\cR)$ be another such source.  Then we get from the linearity of the wave equation that 
\[
0=\p_t u_2^{\phi_w}(T,\cdot)-\p_t u_2^{\tilde \phi_w}(T,\cdot)
=
\p_t u_2^{\phi_w-\tilde \phi_w}(T,\cdot).
\]
Since $\phi_w-\tilde \phi_w \in L^2((0,T)\times \cR)$, an application of the equation \eqref{eq:conv_and_blago} for $\phi:=\phi_w-\tilde \phi_w$ yields that
\[
\inner{\frac{1_{X_{k}}}{\text{Vol}(X_{k})},\p_t u_1^{\phi_w}(T,\cdot)}
-
\inner{\frac{1_{X_{k}}}{\text{Vol}(X_{k})},\p_t u_1^{\tilde \phi_w}(T,\cdot)}
=
0.
\]
Hence,   the map $G_{k}$ is well-defined.
Clearly, the linearity of $G_{k}$ follows from the linearity of the Cauchy problem \eqref{eq:Cauchy_Problem}.
Thus, it only remains to verify that the map $G_{k}$ is bounded. 
Let $w \in L^2(N)$. By the Cauchy-Schwarz inequality and Lemma \ref{lm:exact_control}, we obtain the estimate
\[
|G_{k}(w)|
\leq 
C\|\p_t u_1^{\phi_w}(T,\cdot)\|_{L^2(N)}
\leq
C\|\p_t u_2^{\phi_w}(T,\cdot)\|_{L^2(N)} 
=
C\|w\|_{L^2(N)}. 
\]

Since the operator $G_{k}$ is bounded, we   deduce from the Riesz representation theorem that there exists a function $v_k\in L^2(N)$ such that $G_{k}(w)=\inner{v_k,w}$.
Thus, the equation \eqref{eq:conv_and_blago} implies that for every $k \in \N$ the sequence $\{u_2^{f_{kl}}(T,\cdot)\}_{l=1}^\infty$ is weakly $L^2$-convergent with the limit $v_k$. Moreover, due to \eqref{eq:conv_and_blago}, the function $v_k$  satisfies the identity 
\begin{equation}
\label{eq:pairing_of_v_k_and_wave}
\langle v_k, \p_t u_2^\phi(T,\cdot)\rangle = \Big\langle \frac{1_{X_k}}{\text{Vol}(X_{k})}, \p_t u_1^\phi(T,\cdot)\Big\rangle
\quad
\text{for all } \phi \in L^2((0,T)\times\cR).
\end{equation}
Thus, Condition (1) has been verified. 

Condition (2): 
Let $k \in \N$. By Proposition \ref{prop:L2_control}, there exists a function $\phi_k\in L^2((0,T)\times\cR)$ such that $\p_t u_2^{\phi_k}(T,\cdot)=\text{Vol}(X_{k})^{1/2}v_k$.
By \eqref{eq:pairing_of_v_k_and_wave}, the Cauchy-Schwarz inequality, as well as Lemma \ref{lm:exact_control}, we obtain the estimate
\begin{align*}
\norm{\text{Vol}(X_{k})^{1/2}v_k}_{L^2(N)}^2&=\left|\inner{\text{Vol}(X_{k})^{1/2}v_k,\p_t u_2^{\phi_k}(T,\cdot)}\right|=\left|\inner{\frac{1_{X_{k}}}{\text{Vol}(X_{k})^{1/2}},\p_t u_1^{\phi_k}(T,\cdot)}\right|
\\
&\leq 
\norm{\p_t u_1^{\phi_k}(T,\cdot)}_{L^2(N)}
\leq 
C \norm{\p_t u_2^{\phi_k}(T,\cdot)}_{L^2(N)}=C \norm{\text{Vol}(X_{k})^{1/2}v_k}_{L^2(N)}.
\end{align*}
Here the constant $C$ is the same as in Lemma \ref{lm:exact_control} and is independent of $k$. Therefore, we have
\[
\norm{\text{Vol}(X_{k})^{1/2}v_k}_{L^2(N)} \leq C \text{ for all } k \in \N.
\]
This completes the verification of Condition (2).

\begin{rem}
We would like to emphasize   that if the function $\phi$ as in the equations \eqref{eq:conv_and_blago} and  \eqref{eq:pairing_of_v_k_and_wave} is smooth and compactly supported,  we can replace $\p_t u_i^{\phi}$ with $u_i^\phi$ in these equations via a similar trick as we did in the proof of Corollary \ref{cor:smooth_control}. We will utilize this observation in the verification of Conditions (3) and (4).
\end{rem}

Condition (3): 
Let us recall that
\[
B_i:=B(y_i,r_i),
\quad 
Z_\delta:=\overline{B_1\setminus B_2}, 
\quad \text{ and } \quad 
X_{k}=Z_\delta^\circ \setminus M(\mathcal{\cR},h_{k,x_0}),
\]
where $h_{k,x_0}\colon \cR \to \R$ was defined as $h_k(z)=d(z,x_0)-\frac{1}{k}$. By Lemma \ref{lem:adjusted_finite_speed_of_prop}, we have for any $\phi\in C_0^\infty(\mathcal{B}(\cR,h_{k,x_0};T))$ that  $\supp(u_1^\phi(T,\cdot))\subset M(\cR, h_{k,x_0})$. 
Then we get from the equation \eqref{eq:conv_and_blago} that
\[
\inner{v_k,u_2^\phi(T,\cdot)}=\lim_{l\to\infty}\inner{u_1^{f_{kl}}(T,\cdot),u_1^\phi(T,\cdot)}=\inner{\frac{1_{X_k}}{\text{Vol}(X_k)},u_1^\phi(T,\cdot)}=0.
\]
Hence, it follows from Lemma \ref{lem:adjusted_finite_speed_of_prop} that $v_k$ vanishes in $M(\cR,h_{k,x_0})$. 

Since $y_2 \in \cR$, we can choose $\eta \in (0,r_2)$ such that $B(y_2,\eta)\subset \cR$. By Lemma \ref{lem:adjusted_finite_speed_of_prop}, the set
\begin{equation}
\label{eq:set_of_waves}
\{u_i^\phi(T,\cdot)\mid \phi\in C_0^\infty(\mathcal{B}(B(y_2,\eta), r_2-\eta; T))\}, \quad \text{ for } i \in \{1,2\},
\end{equation}
is dense in $L^2(B_2)$ as
$M(B(y_2,\eta),r_2-\eta)=\overline{B_2}$.
Since $u_1^{f_{kl}}(T,\cdot)\to \frac{1_{X_{k}}}{\text{Vol}(X_{k})}$ in $L^2(N)$ as $l\to\infty$, we get from the equation \eqref{eq:conv_and_blago} that
\[
\inner{v_k,u_2^\phi(T,\cdot)}=\lim_{l\to\infty}\inner{u_1^{f_{kl}}(T,\cdot),u_1^\phi(T,\cdot)}=\inner{\frac{1_{X_{k}}}{\text{Vol}(X_k)},u_1^\phi(T,\cdot)}=0
\]
for all $\phi$ as in \eqref{eq:set_of_waves}. In particular, the last equality above  holds since the sets $X_k$ and $B_2$ are disjoint while supp$(u_1^\phi(T,\cdot))\subset B_2$. Hence, $v_k$ vanishes in $B_2$. 

If $p \in\supp(v_k)$, it follows that either $p \in B_1$ or $p\in N\setminus B_1$. In the first case,   since $B_1\subset Z_\delta\cup B_2$, and we have already excluded the possibility that $p \in B_2$, we must have   $p \in Z_\delta \subset M(\cR, h_{k,x_0})\cup \overline{X_k}$. Furthermore, we have also shown above that $p \notin M(\cR, h_{k,x_0})$. Since $M(\cR, h_{k,x_0})$ is a closure of an open set, it must hold that $p \in \overline{X_k}$. Hence, we  conclude that $\supp(v_k)\subset \overline{X_k}\cup(N\setminus B_1)$. The verifies Condition (3).

Condition (4): Let $\rho \in (0,r_1)$ be such that $B(y_1,\rho)\subset \cR$. Then we have
\[
\mathcal{B}(B(y_1,\rho), r_1-\rho;T)\subset (0,T)\times\cR
\quad \text{ and } \quad
M(B(y_1,\rho),r_1-\rho)=\overline{B_1}.
\]
Since the open ball $B_1$ is an open neighborhood of $x_0$, it follows from Proposition \ref{thm:approximate_controllability} that we can choose $\phi_0\in C_0^\infty(\mathcal{B}(B(y_1,\rho), r_1-\rho;T))$ such that $u_2^{\phi_0}(T,x_0)\neq 0$ and supp$(u^{\phi_0}_2(T,\cdot)) \subset B_1$.  
Hence, we can choose $u_2^{\phi_0}(T,\cdot)$ for the function $\psi_0$ as in Condition (4) of Lemma \ref{lm:convergence}. 
Thus, the equations \eqref{eq:conv_and_blago} and \eqref{eq:pairing_of_v_k_and_wave}, in conjunction with the Lebesgue differentiation theorem, yield that
\[
\begin{aligned}
\lim_{k\to\infty}\inner{v_k,\psi_0}
=&
\lim_{k\to\infty}\inner{v_k,u_2^{\phi_0}(T,\cdot)}
=
\lim_{k\to\infty}\inner{\frac{1_{X_{k}}}{\text{Vol}(X_k)},u_1^{\phi_{0}}(T,\cdot)}=u_1^{\phi_0}(T,x_0).
\end{aligned}
\]
Consequently, Condition (4) of Lemma \ref{lm:convergence} is fulfilled.
Therefore, we have completed showing that $u_2^{f_{kl}}(T,\cdot)$ satisfies Conditions (1)--(4) in Lemma \ref{lm:convergence} for $B=B_1$ and $\psi_0=u^{\phi_0}_2(T,\cdot)$.

Since supp $(u^\phi_2(T,\cdot)) \subset B_1$ for all $\phi\in C_0^\infty(\mathcal{B}(B(y_1,\rho), r_1-\rho;T))$, Lemma \ref{lm:convergence}  implies that
\[
\lim_{k\to\infty}\lim_{l\to\infty}\inner{u_2^{f_{kl}}(T,\cdot),u^\phi_2(T,\cdot)}=\kappa(x_0) u_2^\phi(T,x_0).
\]
On the other hand, we obtain from  Lemma \ref{lem:Blago_identity} and the Lebesgue differentiation theorem that
\[
\lim_{k\to\infty}\lim_{l\to\infty}\inner{u_2^{f_{kl}}(T,\cdot),u^\phi_2(T)}
=
\lim_{k\to\infty}\lim_{l\to\infty}\inner{u_1^{f_{kl}}(T,\cdot),u^\phi_1(T,\cdot)}
=
u^\phi_1(T,x_0).
\]
Therefore, we arrive at the equation \eqref{eq:u_1_=_kappa_u_2_at_x_0} by combining these two observations. This completes the proof of Lemma \ref{lem:u_1_=_kappa_u_2_at_x_0}.
\end{proof}

We are now ready to present the proof of our main theorem.

\begin{proof}[Proof of Theorem \ref{thm:basic_uniqueness_thm}]
We adopt the notations of Lemma \ref{lem:u_1_=_kappa_u_2_at_x_0} and first show that the equation \eqref{eq:u_1_=_kappa_u_2_at_x_0} holds in the set $Z^\circ_\delta$ for some smooth and non-vanishing complex-valued function $\kappa$. As earlier, here we write  $Z_\delta^\circ$  for the interior of the set $Z_\delta$. According to Proposition  \ref{prop:Z-set}, $Z^\circ_\delta$  is nonempty, and due to the choice of $\delta$, it satisfies the property \eqref{con:convex}. Furthermore,  since
\[
Z_\delta=\overline{B_1\setminus B_2}, 
\quad \text{ and } \quad
X_{Z_\delta^\circ,k,x}=Z_\delta^\circ \setminus M(\cR, h_{k,x}), 
\]
we have $X_{Z_\delta^\circ,x,k} \subset B_1$ for each $x\in Z_\delta^\circ$ and $k \in \N$.

Then we choose a sequence of sources $\{f_{kl}\}_{k,l=1}^\infty$ from $L^2((0,T)\times\cS)$ such that the respective waves $u_1^{f_{kl}}(T,\cdot)$ converge to $\frac{1_{X_{Z_\delta^\circ,k,y}}}{\text{Vol}(X_{Z_\delta^\circ,k,y})}$ in $L^2(N)$ as $l\to \infty$. As in the previous parts of this proof, the sequences $u_i^{f_{kl}}(T,\cdot)$ for $i\in \{1,2\}$ satisfy the conditions (1)--(4) of Lemma \ref{lm:convergence}. Thus, we obtain the equation 
\begin{equation}
\label{eq:u_1_=_kappa_u_2_local}
\kappa(x) u_2^\phi(T,x)=u_1^\phi(T,x)
\quad 
\text{ for all }
\phi\in C_0^\infty(\mathcal{B}(B(y_1,\rho), r_1-\rho;T)). 
\end{equation}
In particular, we have shown the existence of a function $\kappa \colon Z_\delta^\circ \to \C$ such that \eqref{eq:u_1_=_kappa_u_2_local} holds for all $x \in Z^\circ_\delta$.

Let us now proceed to show that the function $\kappa$ is smooth and non-vanishing. By Proposition \ref{thm:approximate_controllability}, for any $x\in Z_\delta^\circ$ and $k \in \{1,2\}$, we can find 
$\phi_k\in C_0^\infty(\mathcal{B}(B(y_1,\rho), r_1-\rho;T))$ and a neighborhood $U_x$ of $x$ such that $u_k^{\phi_k}(T,\cdot)\neq 0$ in $U_x$. Thus $\kappa = \frac{u_2^{\phi_1}(T)}{u_1^{\phi_1}(T)}$ and
$\frac{1}{\kappa}=\frac{u_1^{\phi_2}(T)}{u_2^{\phi_2}(T)}$ are well-defined and smooth in $U_x$. Hence, $\kappa$ does not vanish in  $U_x$. Furthermore, since the set $U_x\subset Z_\delta^\circ$ can be chosen for any $x\in Z_\delta^\circ$, the function $\kappa$ does not vanish and is smooth in $Z_\delta^\circ$. 

To finish the proof, we now show that the function $\kappa$ above can be defined globally. Since the point $x_0 \in N$ was arbitrarily chosen, it follows from the construction above that for any $x\in N$, we can find an open neighborhood $Z_{\delta,x}^\circ$, as in \eqref{eq:def_Z_delta}, 
and a smooth non-vanishing function $\kappa \colon Z_{\delta,x}^\circ \to \C$ that satisfies an equation similar to \eqref{eq:u_1_=_kappa_u_2_local}. Since the sets $\{Z_{\delta,x}^\circ\}_{x\in N}$ form an open cover of the compact manifold $N$, we can choose a finite subcover $\{Z_{i}^\circ\}_{i=1}^I$ of $N$, with the respective functions $\kappa_i$ defined on each $Z_i^\circ$. 

Let us  define
\[
\kappa\colon N \to \C, \quad \kappa(x):=\kappa_i(x), \quad \text{ whenever } x\in Z_i^\circ, \quad \text{ for some } i \in \{1,\ldots,I\},
\]
and show that this function is well-defined. That is,  $x\in Z_i^\circ\cap Z_j^\circ$ implies that $\kappa_i(x)=\kappa_j(x)$. 
Recall that we have defined $Z_i=\overline{B(y_{1,i},r_{1,i})\setminus B(y_{2,i},r_{2,i})}$ for some $y_{j,i}\in N$ and $r_{j,i}>0$. 
Without loss of generality, we only consider the case that $x\in Z_a^\circ\cap Z_b^\circ$ for some $a,b \in \{1,\ldots,I\}$.
Let $\varepsilon>0$ be such that $B(y_{2,a},\varepsilon),B(y_{2,b},\varepsilon)\subset \cR$.
We then choose a function $\psi\in C_0^\infty(Z_a^\circ\cap Z_b^\circ)$ that does not vanish near $x$. We also choose the sequences $\{\phi_i\}_{i=1}^\infty\subset C_0^\infty(\mathcal{B}(B(y_{2,a},\varepsilon),r_{2,a}-\varepsilon;T))$ and $\{\varphi_i\}_{i=1}^\infty\subset C_0^\infty(\mathcal{B}(B(y_{2,b},\varepsilon),r_{2,b}-\varepsilon;T))$
such that $u_1^{\phi_j}(T,\cdot)\to \psi$ and $u_1^{\varphi_j}(T,\cdot)\to \psi$ in $L^2(N)$.
Since  we have chosen $\phi_j,\varphi_j\in C_0^\infty((0,2T)\times \cR)$ for each $j \in \N$, and   the sequences 
\[
\left\{\norm{u_1^{\phi_j}(T,\cdot)}_{L^2(N)}\right\}_{j=1}^\infty 
\quad \text{and} \quad  
\left\{\norm{u_1^{\varphi_j}(T,\cdot)}_{L^2(N)}\right\}_{j=1}^\infty
\]
are bounded, by Corollary \ref{cor:smooth_control}, the sequences 
\[
\left\{\norm{u_2^{\phi_j}(T,\cdot)}_{L^2(N)}\right\}_{j=1}^\infty
\quad \text{and} \quad
\left\{\norm{u_2^{\varphi_j}(T,\cdot)}_{L^2(N)}\right\}_{j=1}^\infty
\]
are also bounded. 

Let us next argue why the sequences $\{u_2^{\phi_j}(T,\cdot)\}_{j=1}^\infty$ and $\{u_2^{\varphi_j}(T,\cdot)\}$ are weakly convergent. To this end, we take any function $v\in L^2(N)$ and use the approximate controllability, Lemma \ref{lem:adjusted_finite_speed_of_prop} to choose a sequence $\{h_{i}\}_{i=1}^\infty$ in $C_0^\infty((0,\infty)\times \cS)$ such that $u_2^{h_{i}}(T,\cdot)\to v$. We show first that $\{u_1^{h_i}(T,\cdot)\}_{i=1}^\infty$ is weakly convergent.
Indeed, due to the exact controllability of the Cauchy Problem \eqref{eq:Cauchy_Problem}  from the set $\cR$ at time $T$, we have
\[
\{\p_t u_1^g(T,\cdot)\mid g\in L^2((0,\infty)\times\cR)\}=L^2(N).
\]
Hence, we choose $g\in L^2((0,\infty)\times\cR)$ such that $\p_t u_1^g(T,\cdot)=v$ and apply Lemma \ref{lem:new_blago} to see that
\begin{align*}
\lim_{j\to\infty} \inner{u_1^{h_j}(T,\cdot),v}
=
\lim_{j\to\infty} \inner{u_1^{h_j}(T,\cdot),\p_t u_1^g(T,\cdot)}
=
\lim_{j\to\infty}\inner{u_2^{h_j}(T,\cdot),\p_t u_2^g(T,\cdot)}
=
\inner{v,\p_t u_2^g(T,\cdot)}.
\end{align*}
Therefore,  $\{u_1^{h_j}(T,\cdot)\}_{j=1}^\infty$ is weakly convergent.

Applying Lemma \ref{lem:new_blago} again, we have for all $j \in \N$ that
\begin{equation}
\label{eq:weak_conv_of_u_2_justification}
\begin{aligned}
\inner{u_2^{\phi_j}(T,\cdot),v}
&=
\inner{u_2^{\phi_j}(T,\cdot),v-u_2^{h_j}(T,\cdot)}+\inner{u_2^{\phi_j}(T,\cdot),u_2^{h_j}(T,\cdot)}
\\
&=
\inner{u_2^{\phi_j}(T,\cdot),v-u_2^{h_j}(T,\cdot)}+\inner{u_1^{\phi_j}(T,\cdot),u_1^{h_j}(T,\cdot)}
\\
&=
\inner{u_2^{\phi_j}(T,\cdot),v-u_2^{h_j}(T,\cdot)}
+\inner{u_1^{\phi_j}(T,\cdot)-\psi,u_1^{h_j}(T,\cdot)}
+\inner{\psi,u_1^{h_j}(T,\cdot)}.
\end{aligned}    
\end{equation}
Since the sequence $\left\{\norm{u_2^{\phi_j}(T,\cdot)}_{L^2(N)}\right\}_{j=1}^\infty$ is bounded and $v=\lim_{j\to\infty}u_2^{h_j}(T,\cdot)$,   the first term on the right-hand side of \eqref{eq:weak_conv_of_u_2_justification} vanishes at the limit $j \to \infty$. On the other hand, as $\{u_1^{h_j}(T,\cdot)\}_{j=1}^\infty$ is weakly convergent and therefore bounded, and $\psi=\lim_{j\to \infty } u_1^{\phi_j}(T,\cdot)$,    the second term on the right-hand side of \eqref{eq:weak_conv_of_u_2_justification} also vanishes as $j \to \infty$, while the third term converges in $\R$. Thus, $\{u_2^{\phi_j}(T,\cdot)\}_{j=1}^\infty$ is weakly convergent.
By analogous arguments, we conclude that $\{u_2^{\varphi_j}(T,\cdot)\}_{j=1}^\infty$ is also weakly convergent. 
In what follows we shall denote the weak limits of $\{u_2^{\phi_j}(T,\cdot)\}_{j=1}^\infty$ and $\{u_2^{\varphi_j}(T,\cdot)\}_{j=1}^\infty$ by $\xi$ and $\zeta$, respectively.
Due to the properties of the functions $\kappa_a$ and $\kappa_b$, we have for each $i \in \N$ that
\[
u_1^{\phi_i}(T,\cdot)=\kappa_au_2^{\phi_i}(T,\cdot) \text{ in }Z_a^\circ,
\quad \text{ and } \quad
u_1^{\varphi_i}(T,\cdot)=\kappa_bu_2^{\varphi_i}(T,\cdot) \text{ in }Z_b^\circ.
\]
Since $\psi=\lim_{i\to \infty}u^{\phi_i}_1(T,\cdot)|_{Z_a}=\lim_{i\to \infty}u^{\varphi_i}_1(T,\cdot)|_{Z_b}$, we obtain from these equations  that $\lim_{i\to\infty}\kappa_au_2^{\phi_i}(T,\cdot)|_{Z_a}=\psi$ and $\lim_{i\to\infty}\kappa_bu_2^{\varphi_i}(T,\cdot)|_{Z_b}=\psi$ in the $L^2$-sense.
Let us recall that $u_2^{\phi_i}(T,\cdot)|_{Z_a^\circ}$ and $u_2^{\varphi_i}(T,\cdot)|_{Z_b^\circ}$  converge weakly to $\xi|_{Z_a^\circ}$ and $\zeta|_{Z_b^\circ}$, respectively.
Hence, we get that

\[
\psi=\kappa_a\xi  \text{ in } Z_a^\circ,
\quad \text{ and } \quad
\psi=\kappa_b\zeta  \text{ in } Z_b^\circ.
\]
Since $\phi_i-\varphi_i \in C^\infty_0((0,T),\cR)$ and $u_1^{\phi_i-\varphi_i}(T,\cdot)\to 0$ as $i \to \infty$,  we deduce from Lemma \ref{lem:new_blago} that
\[
\inner{\xi-\zeta,u_2^h(T,\cdot)}
=
\lim_{i \to \infty}\inner{u_2^{\phi_i-\varphi_i}(T,\cdot),u_2^h(T,\cdot)}
=
\lim_{i \to \infty}\inner{u_1^{\phi_i-\varphi_i}(T,\cdot),u_1^h(T,\cdot)}
=
0
\]
for all $h \in C^\infty_0((0,T)\times \cS)$. Thus, an application of Lemma \ref{lem:adjusted_finite_speed_of_prop}  yields that 
\[
0=\xi-\zeta=\left(\frac{1}{\kappa_a}-\frac{1}{\kappa_b}\right)\psi
\quad 
\text{ in } Z_a^\circ \cap Z_b^\circ.
\]
As $\psi$ does not vanish at the point $x$, we see that $\kappa_a=\kappa_b$ near $x$.

Thanks to Lemma \ref{lem:from_equality_of_waves_to_equality_of_LOTs}, we have $A_1=A_2+i\kappa_j^{-1}d\kappa$ and $q_1=q_2$ in $Z^\circ_j$, and there is a constant $C_j$ such that $\abs{\kappa_j}=C_j$ for each $j\in \{1,\ldots,I\}$. Since $\kappa|_{Z^\circ_j}=\kappa_j$ and the sets $\{Z^\circ_1,\ldots, Z^\circ_I\}$ cover $N$, we conclude that
\begin{equation}
\label{eq:equality_of_coefficients}
A_1=A_2+i\kappa^{-1}d\kappa 
\quad 
\text{ and }
\quad
q_1=q_2 \quad
\text{ in } N.
\end{equation}

To finish the proof, we still need to show that $\kappa$ is a unitary function that equals to one in $\cS \cup \cR$. Let $2\varrho>0$ be a Lebesgue number for the cover $\{Z^\circ_1,\ldots, Z^\circ_I\}$, see for instance \cite[Lemma 27.5]{Munkres}. For any points $x,y\in N$, we can find a normalized smooth curve $\gamma:[0,S]\to N$ with $S\geq d(x,y)$ such that $\gamma(0)=x$ and $\gamma(S)=y$. Then for all $t\in [0,S-\varrho]$, we have 
$d(\gamma(t), \gamma(t+\varrho))\leq \varrho$, and due to the choice of Lebesgue number $2\varrho$, there exists $j \in \{1,\ldots,I\}$ such that $\gamma(t), \gamma(t+\varrho) \in Z_j$. In particular, by the previous part of the proof, we know that $\kappa_j$ has a constant magnitude. Therefore,
$
\abs{\kappa(\gamma(t+\varrho))}=\abs{\kappa(\gamma(t))},
$
and an iteration of this argument gives us
\[  \abs{\kappa(x)}=\abs{\kappa(\gamma(0))}=\abs{\kappa(\gamma(S))}=\abs{\kappa(y)}.
\]
Hence, the function $\kappa$ has a constant magnitude in $N$. According to \cite[Proposition 2.39]{saksala2025inverse}, we have
\begin{align*}
\cL_{A_1,q_1}v
&=
\kappa\cL_{A_2,q_2}(\kappa^{-1} v)
\quad \text{ for all } v \in C^\infty(N).
\end{align*}
Therefore, Lemma \ref{lm:kappa_trans_solution} implies that  $\abs{\kappa}=1$ in $N$ and $\kappa|_{\cS\cup \cR}=1$. The proof of Theorem \ref{thm:basic_uniqueness_thm} is now complete.
\end{proof}
\bibliographystyle{abbrv}
\bibliography{bib_source_to_solution}

\end{document}